\newtheorem{theorem}{Theorem}
\newtheorem{lemma}[theorem]{Lemma}
\newtheorem{definition}[theorem]{Definition}
\newtheorem{corollary}[theorem]{Corollary}
\newtheorem{proposition}[theorem]{Proposition}
\theoremstyle{definition}
\newcommand{\Sym}{\mathrm{Sym}}
\newcommand{\Inj}{\mathrm{Inj}}
\newcommand{\C}{\mathrm{C}}
\newcommand{\Z}{\mathbb{Z}}
\newcommand{\N}{\mathbb{N}}
\begin{document}

\title{Conjugation of injections by permutations\thanks{2000
Mathematics Subject Classification numbers: 20M20, 20B30 (primary),
20B07 (secondary).}}

\author{Zachary Mesyan\thanks{This work was done while the author was supported by a Postdoctoral Fellowship from the Center for Advanced Studies in Mathematics at Ben Gurion University, a Vatat Fellowship from the Israeli Council for Higher Education, and ISF grant 888/07.}}

\maketitle

\begin{abstract}
Let $\Omega$ be a countably infinite set, $\Inj(\Omega)$ the monoid of all
injective endomaps of $\Omega$, and $\Sym(\Omega)$ the group of all permutations of $\Omega$. Also, let $f, g, h \in \Inj(\Omega)$ be any three maps, each having at least one infinite cycle. (For instance, this holds if $f,g,h \in \Inj(\Omega) \setminus \Sym(\Omega)$.) We show that there are permutations $a, b \in \Sym(\Omega)$ such that $h=afa^{-1}bgb^{-1} $ if and only if $|\Omega \setminus (\Omega)f| + |\Omega \setminus (\Omega)g| = |\Omega \setminus (\Omega)h|$. We also prove a generalization of this statement that holds for infinite sets $\Omega$ that are not necessarily countable. 
\end{abstract}

\section{Introduction}
Let $\Omega$ be a countably infinite set, $\Inj(\Omega)$ the
monoid of all injective endomaps of $\Omega$, and $\Sym(\Omega)$
the group of all permutations of $\Omega.$ In this paper we investigate the following question: given two maps $f,g \in \Inj(\Omega)$, which elements $h \in \Inj(\Omega)$ can be expressed in the form $h = afa^{-1}bgb^{-1}$ ($a, b \in \Sym(\Omega)$)?

In the case where $f$ and $g$ (and hence also $h$) are bijections, the above question has been studied extensively in the literature. In particular, Moran described all the permutations $f \in \Sym(\Omega)$ such that every element of $\Sym(\Omega)$ can be expressed as a product of two conjugates of $f$. This description relies on many earlier results of Droste, Bertram, Gray, and others (see~\cite{Moran}, \cite{Droste}, and the work referenced therein). Another important result in this vein, due to Droste~\cite{Droste2}, is that if $f,g \in \Sym(\Omega)$ are permutations, both having at least one infinite cycle, then every permutation $h \in \Sym(\Omega)$ that has infinite support can be expressed in the form $h = afa^{-1}bgb^{-1}$, for some $a, b \in \Sym(\Omega)$. (We recall that a map $f \in \Sym(\Omega)$, or more generally, $f \in \Inj(\Omega)$ has an infinite cycle if there exists $\alpha \in \Omega$ such that $\alpha \neq (\alpha)f^i$ for all positive integers $i$. Also, the support of a map $f \in \Inj(\Omega)$ is the set of all elements of $\Omega$ that are moved by $f$.)

Our goal here is to extend Droste's theorem to maps $f, g, h \in \Inj(\Omega)$ that are not necessarily permutations. More specifically we prove that if $f, g, h \in \Inj(\Omega)$ are any three maps having infinite support, such that $f$ and $g$ both have at least one infinite cycle, then $h=afa^{-1}bgb^{-1}$ for some $a, b \in \Sym(\Omega)$ if and only if $|\Omega \setminus (\Omega)f| + |\Omega \setminus (\Omega)g| = |\Omega \setminus (\Omega)h|$. (If $f,g,h \in \Sym(\Omega)$, then the condition $|\Omega \setminus (\Omega)f| + |\Omega \setminus (\Omega)g| = |\Omega \setminus (\Omega)h|$ is satisfied trivially.) It is easy to see that every injective endomap of $\Omega$ that is not a bijection has at least one infinite cycle (and hence also infinite support). Therefore, our result completely answers the question posed in the first paragraph above when $f,g \in \Inj(\Omega) \setminus \Sym(\Omega)$.  

One can quickly obtain a version of our main result that holds for arbitrary infinite sets $\Omega.$ Namely, let $f, g, h \in \Inj(\Omega)$ be any three maps having support of cardinality $K$ ($\aleph_0 \leq K \leq |\Omega|$), such that $K$-many elements of $\Omega$ belong to infinite cycles of both $f$ and $g$. Then there exist permutations $a, b \in \Sym(\Omega)$ such that $h = afa^{-1}bgb^{-1}$ if and only if $|\Omega \setminus (\Omega)f| + |\Omega \setminus (\Omega)g| = |\Omega \setminus (\Omega)h|$. The main result is also used in~\cite{ZM} to describe all the submonoids of $\Inj(\Omega)$ that are closed under conjugation by elements of $\Sym(\Omega)$.

Incidentally, initial interest in the question posed in the first paragraph above came from the result of Ore~\cite{Ore} that every permutation of an infinite set $\Omega$ can be expressed as a commutator, equivalently, that every $h \in\Sym(\Omega)$ can be expressed in the form $h=afa^{-1}bfb^{-1}$ for some $a, b, f \in \Sym(\Omega)$. As a consequence of our main result, we obtain the following generalization of Ore's Theorem: an element $h \in\Inj(\Omega)$ can be expressed in the form $h = afa^{-1}bfb^{-1}$, for some $f \in \Inj(\Omega)$ and $a, b \in \Sym(\Omega)$, if and only if $\, |\Omega \setminus (\Omega)h|$ is either an even integer or infinite.

\subsection*{Acknowledgement}  

The author is grateful to George Bergman for comments on an earlier draft of this note, which have led to significant improvements, and to the referee for suggesting further questions to explore.

\section{Conjugation basics}

We begin by defining notation and making a few basic observations that will be used throughout. Let $\Omega$ be an arbitrary infinite set,
$\Inj(\Omega)$ the monoid of all injective endomaps of $\Omega$,
and $\Sym(\Omega)$ the group of all permutations of $\Omega.$ We
shall write set maps on the right of their arguments. The set of
integers will be denoted by $\Z,$ the set of positive integers
will be denoted by $\Z_+,$ the set of nonnegative integers will be
denoted by $\N,$ and the cardinality of a set $\Sigma$ will be
denoted by $|\Sigma|$.

\begin{definition}
Let $f \in \Inj(\Omega)$ be any element, and let $\, \Sigma \subseteq
\Omega$ be a nonempty set. We shall say that $\, \Sigma$ is a {\em
cycle} under $f$ if the following two conditions are satisfied:
\begin{enumerate}
\item[$(${\rm i}$)$] for all $\alpha \in \Omega,$ $(\alpha)f
\in \Sigma$ if and only if $\alpha \in \Sigma;$
\item[$(${\rm ii}$)$] $\Sigma$ has no proper nonempty subset satisfying
 $(${\rm i}$)$.
\end{enumerate}
We shall say that $\, \Sigma$ is a {\em forward cycle} under $f$
if $\, \Sigma$ is an infinite cycle under $f$ and there is an
element $\alpha \in \Sigma$ such that for all $\beta \in \Omega,$
$(\beta)f \neq \alpha.$ In this case, we shall refer to $\alpha$ as the {\em initial element} of $\, \Sigma$.

If $\, \Sigma$ is an infinite cycle under $f$ that is not a forward cycle, we shall refer to it as an {\em open cycle}.
\end{definition}

It is easy to see that for any $\alpha \in \Omega,$ the set
$$\{(\alpha)f^n : n \in \N\}\cup \{\beta \in \Omega : \exists n \in \Z_+ \,
((\beta)f^n = \alpha)\}$$ is a cycle under $f.$ By condition (ii)
above, it follows that every cycle of $f$ is of this form. This
also implies that every $\alpha \in \Omega$ falls into
exactly one cycle under $f,$ and that there can only be one initial element in a forward cycle of $f$, justifying the usage of the phrase ``{\em the} initial element" in the definition. Thus, we can define a collection
$\{\Sigma_i\}_{i\in I}$ of disjoint subsets of $\Omega$ to be a
{\em cycle decomposition} of $f$ if each $\Sigma_i$ is a cycle
under $f$ and $\bigcup_{i\in I} \Sigma_i = \Omega.$ We note that
$f$ can have only one cycle decomposition, up to reindexing the
cycles. For convenience, we shall therefore at times refer to {\em
the} cycle decomposition of $f.$

\begin{definition}
Let $f,g \in \Inj(\Omega)$ be any two elements. We shall say that
$f$ and $g$ have {\em equivalent} cycle decompositions if there
exists an indexing set $I$ and cycle decompositions $\,
\{\Sigma_i\}_{i\in I}$ and $\, \{\Gamma_i\}_{i\in I}$ of $f$ and
$g,$ respectively, that satisfy the following two conditions:
\begin{enumerate}
\item[$(${\rm i}$)$] for all $i \in I,$ $|\Sigma_i| = |\Gamma_i|;$
\item[$(${\rm ii}$)$] if $\, |\Sigma_i| = |\Gamma_i| = \aleph_0$ for some $i \in I,$ then $\, \Sigma_i$ is a forward cycle under $f$ if and only if $\,
\Gamma_i$ is a forward cycle under $g.$
\end{enumerate}
In this case, we shall write $f \sim g.$
\end{definition}

It is clear that $\sim$ is an equivalence relation on $\Inj(\Omega)$. As in the case of permutations, two injective endomaps $f$ and $g$
have equivalent cycle decompositions if and only if they are
conjugate to each other. For completeness, we provide a proof of
this, which is analogous to the one for permutations.

\begin{proposition}\label{decomposition}
Let $f,g \in \Inj(\Omega)$ be any two maps. Then $g = afa^{-1}$
for some $a \in \Sym(\Omega)$ if and only if $f \sim g$.
\end{proposition}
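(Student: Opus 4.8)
The plan is to prove both directions by constructing explicit bijections between cycle decompositions. The statement is the natural analogue of the classical result for permutations, so I expect the proof to follow the same template, with extra bookkeeping needed to track the two new features present in the $\Inj(\Omega)$ setting but absent for permutations: forward cycles (infinite cycles with an initial element, corresponding to the ``defect'' $\Omega \setminus (\Omega)f$) and the distinction between open and forward cycles among the infinite ones.

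For the ``if'' direction, suppose $f \sim g$, witnessed by cycle decompositions $\{\Sigma_i\}_{i \in I}$ and $\{\Gamma_i\}_{i \in I}$ with $|\Sigma_i| = |\Gamma_i|$ for all $i$, and matching forward/open status whenever the common cardinality is $\aleph_0$. The idea is to build $a \in \Sym(\Omega)$ cycle by cycle, defining a bijection $a_i \colon \Sigma_i \to \Gamma_i$ for each $i$ that intertwines $f$ and $g$, and then set $a = \bigsqcup_i a_i$. For a finite cycle of size $n$, both $\Sigma_i$ and $\Gamma_i$ are genuine $f$- resp.\ $g$-orbits of size $n$: pick base points and match $(\alpha)f^k \mapsto (\beta)g^k$. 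For an infinite open cycle, $f$ restricted to $\Sigma_i$ is conjugate (via choice of a base point and the indexing $n \mapsto (\alpha)f^n$, $n \in \Z$) to the shift on $\Z$, and likewise for $g$ on $\Gamma_i$, so they are conjugate to each other. For a forward cycle, one starts at the initial element and matches $(\alpha)f^n \mapsto (\beta)g^n$ for $n \in \N$; this is where one uses condition (ii) of the definition of $\sim$, since a forward cycle and an open cycle of the same cardinality are \emph{not} conjugate (the initial element is intrinsic). Verifying that the resulting $a$ is a well-defined permutation and that $afa^{-1} = g$ holds pointwise is routine.

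For the ``only if'' direction, suppose $g = afa^{-1}$ for some $a \in \Sym(\Omega)$. Then $a$ carries the cycle decomposition of $f$ to that of $g$: if $\{\Sigma_i\}_{i \in I}$ is the cycle decomposition of $f$, I would show directly from the definition of ``cycle'' that $\{(\Sigma_i)a\}_{i \in I}$ is the cycle decomposition of $g$, and that $a$ restricts to a bijection $\Sigma_i \to (\Sigma_i)a$ intertwining $f$ and $g$. Setting $\Gamma_i = (\Sigma_i)a$ gives (i) immediately since $a$ is a bijection. For (ii), one checks that $\alpha$ is the initial element of a forward cycle of $f$ precisely when $\alpha \notin (\Omega)f$, and that $(\alpha)a \notin (\Omega)g$ iff $\alpha \notin (\Omega)f$ (because $(\Omega)g = (\Omega)afa^{-1} = ((\Omega)f)a^{-1} \cdot$; more carefully, $\beta \in (\Omega)g \iff \beta a^{-1}\cdot$ lies suitably, which I will spell out); so forward cycles correspond to forward cycles and open to open. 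Hence $f \sim g$.

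The main obstacle, and the only place where genuine care beyond the permutation case is needed, is the handling of forward versus open cycles — in particular making sure that the equivalence $\sim$ as defined (which only imposes the forward/open dichotomy for cycles of cardinality exactly $\aleph_0$) is exactly strong enough: it must be checked that two infinite cycles of cardinality $> \aleph_0$ are automatically conjugate regardless of whether they are forward or open, which follows because any such cycle, being of the form $\{(\alpha)f^n : n \in \N\} \cup \{\beta : (\beta)f^n = \alpha \text{ some } n\}$, has the ``forward part'' $\{(\alpha)f^n : n\in\N\}$ countable and so the cardinality is carried entirely by the ``backward part'', giving enough room to build the intertwining bijection in either configuration. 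Once that point is isolated, everything else is the standard orbit-by-orbit construction.
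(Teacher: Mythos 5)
Your construction is essentially the paper's own proof: both directions proceed cycle by cycle, indexing finite cycles by $\{0,\dots,n-1\}$, forward cycles by $\N$, and open cycles by $\Z$, and assembling the conjugating permutation from the pieces, while the converse direction transports the cycle decomposition of $f$ along $a$. (One trivial bookkeeping point: with maps written on the right and $g = afa^{-1}$, the cycle decomposition of $g$ is $\{(\Sigma_i)a^{-1}\}_{i\in I}$ rather than $\{(\Sigma_i)a\}_{i\in I}$; this amounts only to replacing $a$ by $a^{-1}$.)

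However, the ``main obstacle'' you isolate in your last paragraph rests on a misconception, and the resolution you offer for it is incorrect. There are no cycles of cardinality greater than $\aleph_0$: since $f$ is injective, for each $n \in \Z_+$ there is at most one $\beta$ with $(\beta)f^n = \alpha$, so the backward part $\{\beta \in \Omega : \exists n \in \Z_+ \, ((\beta)f^n = \alpha)\}$ of any cycle is countable, and hence every cycle has cardinality at most $\aleph_0$. Thus condition (ii) in the definition of $\sim$ already governs all infinite cycles, and no further check is needed. Moreover, your claim that an uncountable forward cycle and an uncountable open cycle would be ``automatically conjugate'' is false even as a hypothetical: if $g = afa^{-1}$, then $\Omega \setminus (\Omega)g = (\Omega \setminus (\Omega)f)a^{-1}$, so a conjugating permutation carries initial elements to initial elements and can never match a cycle possessing an initial element with one possessing none, whatever the cardinality. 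Since the case is vacuous, this slip does not invalidate your main construction; the correct justification is the countability observation above, not the ``enough room in the backward part'' argument you give.
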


\begin{proof}
Suppose that $g = afa^{-1}$ for some $a \in \Sym(\Omega),$ and let
$\{\Sigma_i\}_{i\in I}$ be a cycle decomposition for $f$. It is
easy to check that $\{(\Sigma_i)a^{-1}\}_{i\in I}$ is a cycle
decomposition for $g,$ and that $\{\Sigma_i\}_{i\in I}$ and
$\{(\Sigma_i)a^{-1}\}_{i\in I}$ are equivalent.

Conversely, let $\{\Sigma_i\}_{i\in I}$ and $\{\Gamma_i\}_{i\in
I}$ be equivalent cycle decompositions for $f$ and $g,$
respectively. For each $\Sigma_i$ we shall define a set of integers
$J_i.$  If $\Sigma_i$ is finite, then let $J_i = \{0, 1, \dots,
n-1\}$, where $n = |\Sigma_i|;$ if $\Sigma_i$ is a forward cycle,
then let $J_i = \N;$ if $\Sigma_i$ is an open cycle, then let $J_i = \Z.$
We can then write each $\Sigma_i$ as $\Sigma_i =
\{\sigma_{i,j}\}_{j\in J_i},$ where for each $j \in J_i,$ we have
$(\sigma_{i,j})f = \sigma_{i,j+1}$ (or, $(\sigma_{i,j})f =
\sigma_{i,j+1} \, (\mathrm{mod} \ |\Sigma_i|),$ if $\Sigma_i$ is
finite). We can similarly write each $\Gamma_i$ as $\Gamma_i =
\{\gamma_{i,j}\}_{j\in J_i}.$ We next define an element $a \in
\Sym(\Omega)$ by $(\gamma_{i,j})a = \sigma_{i,j}$ for all $i$ and
$j$. (Since $\dot{\bigcup}_{i\in I} \Sigma_i = \Omega$ and
$\dot{\bigcup}_{i\in I} \Gamma_i = \Omega,$ this indeed defines an
element of $\Sym(\Omega).$) Then for all $i$ and $j$ we have
$$(\gamma_{i,j})afa^{-1} = (\sigma_{i,j})fa^{-1} =
(\sigma_{i,j+1})a^{-1} = \gamma_{i,j+1} = (\gamma_{i,j})g$$ (where
the equalities are modulo $|\Sigma_i| = |\Gamma_i|,$ whenever
appropriate). Hence $afa^{-1} = g.$
\end{proof}

The following two observations, while easy to prove, will be important in the sequel.

\begin{lemma}\label{fwd cycles}
Let $f \in \Inj(\Omega)$ be any map. Then there is a one-to-one
correspondence between the elements of $\, \Omega \setminus
(\Omega)f$ and forward cycles in the cycle decomposition of $f$.
\end{lemma}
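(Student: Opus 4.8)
The plan is to exhibit the correspondence explicitly: send each $\alpha \in \Omega \setminus (\Omega)f$ to the unique cycle of $f$ containing it, and then verify that this cycle is always a forward cycle having $\alpha$ as its initial element, and that every forward cycle arises from exactly one such $\alpha$.

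First I would show the map is well defined, i.e.\ lands in the set of forward cycles. Fix $\alpha \notin (\Omega)f$ and let $\Sigma$ be the cycle containing $\alpha$. By the description of cycles recalled just after the definition, $\Sigma = \{(\alpha)f^n : n \in \N\} \cup \{\beta \in \Omega : \exists n \in \Z_+\ ((\beta)f^n = \alpha)\}$. Since $\alpha \notin (\Omega)f$, no $\beta$ satisfies $(\beta)f = \alpha$; and if $(\beta)f^n = \alpha$ with $n \geq 2$, then $((\beta)f^{n-1})f = \alpha$, again impossible. Hence the second set is empty and $\Sigma = \{(\alpha)f^n : n \in \N\}$. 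Injectivity of $f$ forces these powers to be pairwise distinct, for if $(\alpha)f^m = (\alpha)f^n$ with $m < n$ then $\alpha = (\alpha)f^{n-m} \in (\Omega)f$, a contradiction; so $\Sigma$ is infinite. As nothing maps to $\alpha$, the set $\Sigma$ is a forward cycle with initial element $\alpha$.

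Next I would check bijectivity of $\alpha \mapsto \Sigma$. For injectivity, observe that in the cycle $\Sigma = \{(\alpha)f^n : n \in \N\}$ just produced, every element $(\alpha)f^n$ with $n \geq 1$ equals $((\alpha)f^{n-1})f \in (\Omega)f$, so $\alpha$ is the \emph{only} element of $\Sigma$ outside $(\Omega)f$; since distinct elements of $\Omega \setminus (\Omega)f$ lie in distinct cycles (each element lies in exactly one cycle), the map is one-to-one. For surjectivity, given any forward cycle $\Gamma$ with initial element $\beta$, the defining property of the initial element says $(\delta)f \neq \beta$ for all $\delta \in \Omega$, i.e.\ $\beta \in \Omega \setminus (\Omega)f$, and $\beta$ is sent to $\Gamma$.

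The argument is entirely elementary, and the only point requiring any care, the closest thing to an obstacle, is using the injectivity of $f$ together with the explicit form of cycles to rule out an element of $\Omega \setminus (\Omega)f$ lying in a finite cycle, an open cycle, or in the ``tail'' $\{(\alpha)f^n : n \geq 1\}$ of a forward cycle; the rest is bookkeeping with facts already established after the definitions (each element lies in a unique cycle, and a forward cycle has a unique initial element).
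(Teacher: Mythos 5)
Your proposal is correct and follows essentially the same route as the paper's proof: send each $\alpha \in \Omega \setminus (\Omega)f$ to the (forward) cycle it generates, using injectivity of $f$ to see the powers $(\alpha)f^n$ are distinct, and note that each forward cycle contains exactly one element outside $(\Omega)f$, namely its initial element. You simply spell out the well-definedness, injectivity, and surjectivity checks in more detail than the paper does.
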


\begin{proof}
Suppose that $\alpha \in \Omega \setminus (\Omega)f.$ Then
$\{(\alpha)f^k : k \in \N\}$ is a forward cycle under $f.$ (Since
$f$ is injective and $\alpha \in \Omega \setminus (\Omega)f,$
$(\alpha)f^i \neq (\alpha)f^j$ for $i \neq j.$) Thus, each element
of $\Omega \setminus (\Omega)f$ is contained in a forward cycle in
the decomposition of $f.$ On the other hand, by the definition of
``forward cycle," each such cycle contains exactly one element of
$\Omega \setminus (\Omega)f.$ 
\end{proof}

\begin{lemma}\label{coimage}
Let $g \in \Inj(\Omega)$ be a map, and let $\, \Delta \subseteq \Omega$ be a subset. Then $\, |\Omega \setminus \Delta| + |\Omega \setminus (\Omega)g| = |\Omega \setminus (\Delta)g|.$ In particular, for all $f, g \in \Inj(\Omega)$ we have $\, |\Omega \setminus (\Omega)f| + |\Omega \setminus (\Omega)g| = |\Omega \setminus (\Omega)fg|.$
\end{lemma}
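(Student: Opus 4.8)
The plan is to realize $\Omega \setminus (\Delta)g$ as a disjoint union of two pieces whose cardinalities are exactly $|\Omega \setminus \Delta|$ and $|\Omega \setminus (\Omega)g|$, and then add. First I would note the disjoint decomposition $\Omega \setminus (\Delta)g = (\Omega \setminus (\Omega)g) \cup ((\Omega)g \setminus (\Delta)g)$, where the two sets on the right are disjoint because the second is contained in $(\Omega)g$. This just records that an element of $\Omega$ fails to lie in $(\Delta)g$ either because it lies outside the image of $g$ altogether, or because it lies in the image but is not hit from $\Delta$.

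Next I would use injectivity of $g$ to identify the second piece: $(\Omega)g \setminus (\Delta)g = (\Omega \setminus \Delta)g$. Indeed, if $\alpha \in \Omega$ and $(\alpha)g \in (\Delta)g$, then $(\alpha)g = (\beta)g$ for some $\beta \in \Delta$, and injectivity forces $\alpha = \beta \in \Delta$; conversely $(\Omega \setminus \Delta)g$ is clearly contained in $(\Omega)g$ and disjoint from $(\Delta)g$. Since $g$ restricts to a bijection from $\Omega \setminus \Delta$ onto $(\Omega \setminus \Delta)g$, we get $|(\Omega)g \setminus (\Delta)g| = |\Omega \setminus \Delta|$. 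Combining this with the disjoint decomposition of the previous paragraph yields $|\Omega \setminus (\Delta)g| = |\Omega \setminus \Delta| + |\Omega \setminus (\Omega)g|$, which is the first assertion.

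Finally, for the ``in particular'' clause I would simply specialize to $\Delta = (\Omega)f$ for $f \in \Inj(\Omega)$: then $(\Delta)g = (\Omega)fg$ and $|\Omega \setminus \Delta| = |\Omega \setminus (\Omega)f|$, so the general statement becomes $|\Omega \setminus (\Omega)f| + |\Omega \setminus (\Omega)g| = |\Omega \setminus (\Omega)fg|$. There is no genuine obstacle here; the only thing needing a little care is checking the disjointness of the decomposition and the injectivity argument identifying $(\Omega)g \setminus (\Delta)g$ with $(\Omega \setminus \Delta)g$, both of which are routine.
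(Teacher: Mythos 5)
Your proposal is correct and follows essentially the same route as the paper: both decompose $\Omega \setminus (\Delta)g$ as the disjoint union of $\Omega \setminus (\Omega)g$ and $(\Omega \setminus \Delta)g$, using injectivity of $g$ to see that the second piece has cardinality $|\Omega \setminus \Delta|$, and then specialize $\Delta = (\Omega)f$ for the final claim. No gaps.
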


\begin{proof}
Set $\Omega \setminus \Delta = \Sigma$ and $\Omega \setminus
(\Omega)g = \Gamma.$ Then, in particular, $(\Sigma)g \cap \Gamma =
\emptyset.$ Since $(\Delta)g \subseteq (\Omega)g,$ we have
$\Omega \setminus (\Delta)g \supseteq \Omega \setminus (\Omega)g
= \Gamma.$ Also, $\Omega \setminus (\Delta)g \supseteq
(\Sigma)g,$ since otherwise there would exist elements $\alpha \in
\Sigma$ and $\beta \in \Delta$ such that $(\beta)g = (\alpha)g,$
contradicting the injectivity of $g.$ Therefore, $\Omega \setminus
(\Delta)g \supseteq (\Sigma)g \cup \Gamma.$ Further, if $\alpha
\in (\Omega)g$ but $\alpha \notin (\Delta)g,$ then $\alpha$ must
be an element of $(\Sigma)g.$ Thus, $\Omega \setminus (\Delta)g
\subseteq (\Sigma)g \cup \Gamma.$ The desired statement then
follows from the fact that $(\Sigma)g \, \dot{\cup} \, \Gamma = \Omega \setminus (\Delta)g.$ The final claim is immediate.
\end{proof}

\begin{definition}\label{cycle number}
For each $f \in \Inj(\Omega)$ and $n \in \Z_+$ let $$(f)\C_n =
|\{\Sigma \subseteq \Omega : \Sigma \ \mathrm{is} \ \mathrm{a} \
\mathrm{cycle} \ \mathrm{under} \ f \ \mathrm{of} \
\mathrm{cardinality} \ n\}|.$$ Similarly, let
$$(f)\C_{\mathrm{open}} = |\{\Sigma \subseteq \Omega : \Sigma \
\mathrm{is} \ \mathrm{an} \ \mathrm{open} \ \mathrm{cycle} \
\mathrm{under} \ f\}|$$ and
$$(f)\C_{\mathrm{fwd}} = |\{\Sigma \subseteq \Omega :  \Sigma \ \mathrm{is} \ \mathrm{a} \ \mathrm{forward} \ \mathrm{cycle} \ \mathrm{under} \
f\}|.$$
\end{definition}

Thus, given two elements $f,g \in \Inj(\Omega)$, we have $f \sim g$ if and only if $(f)\C_{\mathrm{open}} = (g)\C_{\mathrm{open}},$
$(f)\C_{\mathrm{fwd}} = (g)\C_{\mathrm{fwd}},$ and $(f)\C_n
= (g)\C_n$ for all $n \in \Z_+.$

\begin{lemma} \label{pass}
Let $\, \Omega$ and $\, \Omega'$ be two infinite sets such that $\, |\Omega| = |\Omega'|$. Also, let $f \in \Inj(\Omega)$ and $f' \in \Inj(\Omega')$ be maps such that $(f)\C_{\mathrm{open}} = (f')\C_{\mathrm{open}},$
$(f)\C_{\mathrm{fwd}} = (f')\C_{\mathrm{fwd}},$ and $(f)\C_n
= (f')\C_n$ for all $n \in \Z_+.$ Then, for any bijection $g : \Omega' \rightarrow \Omega$ we have $f' \sim gfg^{-1}$.
\end{lemma}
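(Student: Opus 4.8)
The plan is to show that conjugating $f$ by the bijection $g$ merely transports the cycle decomposition of $f$ from $\Omega$ over to $\Omega'$, leaving intact all of the cardinality data entering Definition~\ref{cycle number}. Once this is done, $gfg^{-1}$ and $f'$ will be two elements of $\Inj(\Omega')$ with identical $\C$-invariants, and the characterization of $\sim$ recorded immediately after Definition~\ref{cycle number} will finish the argument. (Note that $gfg^{-1} \in \Inj(\Omega')$, being a composite of injections $\Omega' \to \Omega \to \Omega \to \Omega'$; the hypothesis $|\Omega| = |\Omega'|$ is just what guarantees that such bijections $g$ exist at all.)

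First I would establish the following claim, whose proof is the first paragraph of the proof of Proposition~\ref{decomposition} adapted to a bijection between two distinct sets: if $\{\Sigma_i\}_{i \in I}$ is the cycle decomposition of $f$, then $\{(\Sigma_i)g^{-1}\}_{i \in I}$ is the cycle decomposition of $gfg^{-1}$. Indeed, for any $\alpha \in \Omega'$ one has $(\alpha)gfg^{-1} \in (\Sigma_i)g^{-1}$ if and only if $(\alpha)gf \in \Sigma_i$ if and only if $(\alpha)g \in \Sigma_i$ (using that $\Sigma_i$ is a cycle under $f$) if and only if $\alpha \in (\Sigma_i)g^{-1}$; and minimality is inherited because any proper nonempty subset of $(\Sigma_i)g^{-1}$ closed under $gfg^{-1}$ would map under $g$ to a proper nonempty subset of $\Sigma_i$ closed under $f$.

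Next I would verify that the bijection $\Sigma_i \mapsto (\Sigma_i)g^{-1}$ between the cycle decompositions of $f$ and of $gfg^{-1}$ respects the features counted in Definition~\ref{cycle number}. Cardinalities are clearly preserved, $|(\Sigma_i)g^{-1}| = |\Sigma_i|$, since $g$ is injective. For the forward/open distinction the point requiring a little care is the identity $(\Omega')gfg^{-1} = ((\Omega)f)g^{-1}$, which yields $\Omega' \setminus (\Omega')gfg^{-1} = (\Omega \setminus (\Omega)f)g^{-1}$; hence an infinite cycle $\Sigma_i$ of $f$ contains its initial element $\alpha \in \Omega \setminus (\Omega)f$ exactly when the infinite cycle $(\Sigma_i)g^{-1}$ of $gfg^{-1}$ contains $(\alpha)g^{-1} \in \Omega' \setminus (\Omega')gfg^{-1}$ (compare Lemma~\ref{fwd cycles}). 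So $\Sigma_i$ is a forward cycle under $f$ if and only if $(\Sigma_i)g^{-1}$ is a forward cycle under $gfg^{-1}$, and likewise for open cycles. It follows that $(gfg^{-1})\C_{\mathrm{open}} = (f)\C_{\mathrm{open}}$, $(gfg^{-1})\C_{\mathrm{fwd}} = (f)\C_{\mathrm{fwd}}$, and $(gfg^{-1})\C_n = (f)\C_n$ for every $n \in \Z_+$.

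Finally, combining these equalities with the hypotheses $(f)\C_{\mathrm{open}} = (f')\C_{\mathrm{open}}$, $(f)\C_{\mathrm{fwd}} = (f')\C_{\mathrm{fwd}}$, and $(f)\C_n = (f')\C_n$ for all $n \in \Z_+$, one gets that $gfg^{-1}$ and $f'$ are elements of $\Inj(\Omega')$ having the same open-cycle count, the same forward-cycle count, and the same number of $n$-cycles for each $n$; by the characterization of $\sim$ stated after Definition~\ref{cycle number}, this gives $f' \sim gfg^{-1}$. I do not expect a genuine obstacle here: the whole argument is a routine transport of structure, and the only step worth double-checking is the compatibility of the cycle correspondence with the forward/open dichotomy, which reduces to the image identity $(\Omega')gfg^{-1} = ((\Omega)f)g^{-1}$ noted above.
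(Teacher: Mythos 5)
Your proposal is correct and follows the paper's own argument: transport the cycle decomposition of $f$ along $g^{-1}$ to get the cycle decomposition of $gfg^{-1}$, observe that the $\C$-invariants are unchanged, and conclude via the characterization of $\sim$ recorded after Definition~\ref{cycle number}. You merely spell out the verification that the paper dismisses as "easily checked" (including the forward/open compatibility via the image identity), so there is nothing to add.
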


\begin{proof}
Let $\{\Sigma_i\}_{i\in I}$ be a cycle decomposition for $f$. Then $\{(\Sigma_i)g^{-1}\}_{i\in I}$ can be easily checked to be a cycle decomposition for $gfg^{-1}$. It follows that $(f)\C_{\mathrm{open}} = (gfg^{-1})\C_{\mathrm{open}},$
$(f)\C_{\mathrm{fwd}} = (gfg^{-1})\C_{\mathrm{fwd}},$ and $(f)\C_n
= (gfg^{-1})\C_n$ for all $n \in \Z_+$. Hence, by the hypotheses and the above remark, we have $f' \sim gfg^{-1}$.
\end{proof}

Before concluding this section, let us give one more easy observation, which will be convenient to have on record.

\begin{lemma} \label{prod-conj-equiv}
Let $f, g, h \in \Inj(\Omega)$ be any three maps. Then the following conditions are equivalent.
\begin{enumerate}
\item[$(1)$] There exist permutations $a, b \in \Sym(\Omega)$ such that
$h=afa^{-1}bgb^{-1}$.
\item[$(2)$] There exist elements $f',g' \in \Inj(\Omega)$ such that $f' \sim f$, $g' \sim g$, and $f'g' \sim h$.
\end{enumerate}
\end{lemma}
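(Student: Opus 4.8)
The plan is to read off both implications directly from Proposition~\ref{decomposition}, which identifies conjugacy in $\Inj(\Omega)$ with equivalence of cycle decompositions, together with the fact that $\sim$ is an equivalence relation.

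For the implication $(1) \Rightarrow (2)$, suppose $h = afa^{-1}bgb^{-1}$ for some $a,b \in \Sym(\Omega)$. I would set $f' = afa^{-1}$ and $g' = bgb^{-1}$. Proposition~\ref{decomposition} immediately gives $f' \sim f$ and $g' \sim g$, and since $f'g' = afa^{-1}bgb^{-1} = h$, reflexivity of $\sim$ yields $f'g' \sim h$, as required.

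For the converse $(2) \Rightarrow (1)$, suppose $f' \sim f$, $g' \sim g$, and $f'g' \sim h$. Using the symmetry of $\sim$ and Proposition~\ref{decomposition}, choose $a,b,c \in \Sym(\Omega)$ with $f' = afa^{-1}$, $g' = bgb^{-1}$, and $h = c(f'g')c^{-1}$. Then
$$h = c\,afa^{-1}bgb^{-1}c^{-1} = (ca)f(ca)^{-1}(cb)g(cb)^{-1},$$
so the permutations $ca$ and $cb$ exhibit $h$ in the form required by~(1).

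There is no real obstacle here: the argument is a direct application of Proposition~\ref{decomposition}, and the only thing to be careful about is tracking the direction of conjugation supplied by that proposition and composing the conjugating permutations in the correct order. I would therefore present this lemma with a short two-paragraph proof along the lines above.
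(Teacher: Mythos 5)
Your proposal is correct and follows essentially the same route as the paper: both directions are read off from Proposition~\ref{decomposition}, taking $f'=afa^{-1}$, $g'=bgb^{-1}$ for $(1)\Rightarrow(2)$, and for $(2)\Rightarrow(1)$ conjugating the product $f'g'$ to $h$ and absorbing that permutation into the two conjugators exactly as in the paper's $(ec)f(ec)^{-1}(ed)g(ed)^{-1}$ computation.
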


\begin{proof}
If $h=afa^{-1}bgb^{-1}$ for some $a, b \in \Sym(\Omega)$, then $h = f'g'$, where $f' = afa^{-1} \sim f$ and $g' = bgb^{-1} \sim g$, by Proposition~\ref{decomposition}. Hence (1) implies (2). 

To prove the converse, assume that we have $f',g' \in \Inj(\Omega)$ as in (2). Then, by Proposition~\ref{decomposition}, we can find permutations $c, d, e \in \Sym(\Omega)$ such that  $e^{-1}he = cfc^{-1}dgd^{-1}$. It follows that $h = e(cfc^{-1}dgd^{-1})e^{-1} = (ec)f(ec)^{-1}(ed)g(ed)^{-1},$ as desired.
\end{proof}

\section{A composition theorem}

We turn to our main result.

\begin{theorem}\label{comp}
Let $\, \Omega$ be a countably infinite set, and let $f, g, h \in
\Inj(\Omega)$ be any three maps, each having at least one infinite cycle. Then there exist permutations $a, b \in \Sym(\Omega)$ such that
$h=afa^{-1}bgb^{-1}$ if and only if $\, |\Omega \setminus
(\Omega)f| + |\Omega \setminus (\Omega)g| = |\Omega \setminus
(\Omega)h|.$
\end{theorem}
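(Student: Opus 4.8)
\emph{Forward implication.} I would dispatch this first. Given $h = afa^{-1}bgb^{-1}$, set $f' = afa^{-1}$ and $g' = bgb^{-1}$, so that $h = f'g'$ and, by Proposition~\ref{decomposition}, $f' \sim f$ and $g' \sim g$. Then $|\Omega \setminus (\Omega)f| = (f)\C_{\mathrm{fwd}} = (f')\C_{\mathrm{fwd}} = |\Omega \setminus (\Omega)f'|$ by Lemma~\ref{fwd cycles}, and likewise $|\Omega \setminus (\Omega)g| = |\Omega \setminus (\Omega)g'|$, while Lemma~\ref{coimage} gives $|\Omega \setminus (\Omega)h| = |\Omega \setminus (\Omega)f'| + |\Omega \setminus (\Omega)g'|$; adding the first two equations to the third yields the cardinality equation. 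So the substance of the theorem is the converse.

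\emph{Reductions for the converse.} Assume now $|\Omega \setminus (\Omega)f| + |\Omega \setminus (\Omega)g| = |\Omega \setminus (\Omega)h|$. By Lemma~\ref{prod-conj-equiv} it suffices to produce $f' \sim f$ and $g' \sim g$ with $f'g' \sim h$, and by Lemma~\ref{pass} I am free to replace $\Omega$ and the triple $f, g, h$ by any countably infinite set and maps with the same invariants $(\cdot)\C_n$ ($n \in \Z_+$), $(\cdot)\C_{\mathrm{open}}$, $(\cdot)\C_{\mathrm{fwd}}$; so I may work with whatever explicit models are convenient. This reduction already takes care of the forward cycles of $h$: by Lemmas~\ref{coimage} and~\ref{fwd cycles}, \emph{every} pair $f' \sim f$, $g' \sim g$ satisfies
\begin{align*}
(f'g')\C_{\mathrm{fwd}} &= |\Omega \setminus (\Omega)f'g'| = |\Omega \setminus (\Omega)f| + |\Omega \setminus (\Omega)g| \\
&= |\Omega \setminus (\Omega)h| = (h)\C_{\mathrm{fwd}}.
\end{align*}
So what remains is to choose $f', g'$ so that also $(f'g')\C_n = (h)\C_n$ for every $n \in \Z_+$ and $(f'g')\C_{\mathrm{open}} = (h)\C_{\mathrm{open}}$ --- the ``permutation-like'' part of the assertion, which for $f, g, h \in \Sym(\Omega)$ is exactly Droste's theorem.

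\emph{The construction.} Two ingredients drive the plan. First, a direct-sum principle: if $\Omega = \Omega_1 \,\dot{\cup}\, \Omega_2$ with $\Omega_1$ and $\Omega_2$ invariant under both $f'$ and $g'$ (hence under $f'g'$), then it is enough to realize the problem separately on $\Omega_1$ and $\Omega_2$ for suitably compatible splittings of $f$, $g$, $h$. Second, a single infinite cycle is an inexhaustible resource for building structure in a product, refining the phenomenon behind Droste's theorem: a single open cycle times its inverse is the identity, and variants of this make the product a single prescribed finite cycle, or a single open cycle, together with fixed points; once $f'$ and $g'$ each contribute an infinite cycle on a block, the orbits of $f'g'$ there can be steered to essentially arbitrary cycle structure, and a forward cycle of $f$ or $g$ can be spent to yield forward (or other) cycles in the product. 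Using the freedom of models and the direct-sum principle, I would first peel off the easily handled pieces of $h$ and reduce to a core case in which $f$ and $g$ each consist of a single infinite cycle plus at most countably many finite cycles; there I would enumerate the cycles of $h$ and define $f'$ and $g'$ block by block, at each step consuming a controlled portion of the single infinite cycle of $f$ and of $g$ --- and of their finite cycles when needed --- so that on that block $f'g'$ is a prescribed finite cycle, open cycle, or (when $f$ or $g$ still has a forward cycle to spend) forward cycle, all the while keeping a ledger ensuring that every cycle of $f$ and of $g$ eventually gets placed, every cycle of $h$ eventually gets produced, and nothing is left over.

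\emph{Main obstacle.} The difficulty is concentrated entirely in this last step. Unlike the permutation case, the finite-cycle counts $(f)\C_n$, $(g)\C_n$, $(h)\C_n$ bear no simple relation to one another, and $f$, $g$ may offer only one infinite cycle apiece, so the problem cannot be broken into independent finite blocks: the finite cycles, the open cycles, and the forward cycles of $h$ must be manufactured \emph{simultaneously} out of the cycle material of $f$ and $g$, under the rigid constraint that $f'$ and $g'$ reproduce the complete cycle types of $f$ and $g$. Finding the right way to batch and route this material, and checking that the counts balance in every configuration (finite versus infinite $|\Omega \setminus (\Omega)h|$; each of $f, g, h$ a permutation or not; open versus forward infinite cycles), is where the real work lies; everything else is the soft reduction above together with Proposition~\ref{decomposition} and Lemmas~\ref{fwd cycles}, \ref{coimage}, \ref{pass}, \ref{prod-conj-equiv}.
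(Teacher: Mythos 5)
Your forward implication and your reductions are correct and coincide with the paper's own plan: the cardinality condition follows from Proposition~\ref{decomposition}, Lemma~\ref{fwd cycles}, and Lemma~\ref{coimage}; the converse reduces via Lemma~\ref{prod-conj-equiv} to finding $f' \sim f$, $g' \sim g$ with $f'g' \sim h$; and your observation that the forward-cycle count of $f'g'$ automatically equals $(h)\C_{\mathrm{fwd}}$ is exactly the remark the paper makes at the start of its section on forward cycles, so only the finite-cycle and open-cycle counts of the product need to be engineered. The case where $f$ and $g$ are both permutations is correctly delegated to Droste's theorem.

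The gap is that the engineering itself --- the entire constructive heart of the theorem --- is never carried out, and you say so yourself (``is where the real work lies''). What you offer for the converse is a strategy sketch (``peel off easy pieces, then define $f'$ and $g'$ block by block, keeping a ledger''), with no explicit maps, no verification that any proposed product has prescribed cycle structure, and no argument that the bookkeeping can always be closed out. The paper fills this gap with two kinds of concrete constructions that your proposal does not supply: (i) ``anchor'' maps (Lemma~\ref{open h}, Lemma~\ref{open h 2}, Corollary~\ref{open h 2.5}) in which $f'$ and $g'$ each have exactly one infinite cycle and no other cycles while $f'g'$ has any prescribed number $K \le \aleph_0$ of open cycles plus the forced forward cycles --- this is the genuinely nontrivial case your sketch glosses over (your heuristic ``an open cycle times its inverse is the identity'' does not apply, since here $f'$ and $g'$ may be forced to be non-surjective with a single forward cycle, and the explicit interleaving in Figures 1--4 is what makes it work); and (ii) splicing lemmas (Lemmas~\ref{fin h}, \ref{fin f}, \ref{fin g}, \ref{open f}, \ref{open g}, \ref{fwd f}, \ref{fwd g}) showing that, given $fg = h$ with a common infinite ``reservoir'' cycle, one can add arbitrarily many finite, open, or forward cycles to exactly one of the three maps without disturbing the $\sim$-classes of the other two, the hypothesis about an infinite triple intersection of cycles being what allows the lemmas to be applied in succession. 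Precisely because, as you note, the finite cycles of $f$, $g$, $h$ bear no simple relation to one another and $f$, $g$ may each have only one infinite cycle, the claim that your block-by-block routing ``can be steered to essentially arbitrary cycle structure'' is the theorem itself in disguise; without constructions of the above kind the proposal does not constitute a proof.
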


Before giving the proof, let us discuss certain features and consequences of this theorem. First, we note that, by Lemma~\ref{fwd cycles}, every element of $\Inj(\Omega)$ that is not a bijection has at least one forward (and hence infinite) cycle. Therefore, we have the following special case.

\begin{corollary}
Let $\, \Omega$ be a countably infinite set, and let $f, g, h \in
\Inj(\Omega) \setminus \Sym(\Omega)$ be any three maps. Then there exist permutations $a, b \in \Sym(\Omega)$ such that
$h=afa^{-1}bgb^{-1}$ if and only if $\, |\Omega \setminus
(\Omega)f| + |\Omega \setminus (\Omega)g| = |\Omega \setminus
(\Omega)h|.$
\end{corollary}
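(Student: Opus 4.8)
The plan is to use Lemma~\ref{prod-conj-equiv} to reduce the statement to a purely combinatorial problem about cycle decompositions: it suffices to show that, under the stated cardinality hypothesis on coimages, one can find $f' \sim f$ and $g' \sim g$ whose composite $f'g'$ is $\sim$-equivalent to $h$. The necessity of the condition $|\Omega \setminus (\Omega)f| + |\Omega \setminus (\Omega)g| = |\Omega \setminus (\Omega)h|$ is immediate from Lemma~\ref{coimage}, since if $h = afa^{-1}bgb^{-1}$ then $|\Omega \setminus (\Omega)h| = |\Omega \setminus (\Omega)afa^{-1}| + |\Omega \setminus (\Omega)bgb^{-1}|$, and conjugation by a permutation does not change the cardinality of the coimage (this also follows from Proposition~\ref{decomposition} and Lemma~\ref{fwd cycles}). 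So the whole content is the sufficiency direction.

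For sufficiency, I would argue by explicitly constructing $f'$ and $g'$ on $\Omega$ with prescribed cycle structure and then gluing. The key building blocks are small ``local'' configurations: for instance, on a doubly infinite line $\{\ldots, \alpha_{-1}, \alpha_0, \alpha_1, \ldots\}$ one can realize an open cycle of $f'$ and an open cycle of $g'$ whose product is a single open cycle of $h$; by inserting a ``defect'' one can instead arrange the product to have a forward cycle or to split off a finite cycle, thereby creating one extra element of $\Omega \setminus (\Omega)h$ using one extra element of $\Omega \setminus (\Omega)f'$ or $\Omega \setminus (\Omega)g'$; and using forward cycles (``rays'') of $f'$ and $g'$ one builds forward cycles of $h$, with the bookkeeping governed by Lemma~\ref{fwd cycles}. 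Since $f$, $g$, $h$ each have at least one infinite cycle, there is always an infinite cycle available to absorb finite cycles and to serve as the ``reservoir'' into which one routes points; this flexibility, together with the cardinality identity, should let one match up the total supply $|\Omega \setminus (\Omega)f| + |\Omega \setminus (\Omega)g|$ of forward-cycle-initial points on the $f',g'$ side with the demand $|\Omega \setminus (\Omega)h|$ on the $h$ side, and simultaneously account for all of $h$'s finite and open cycles. The construction naturally breaks into cases according to which of $|\Omega \setminus (\Omega)f|$, $|\Omega \setminus (\Omega)g|$, $|\Omega \setminus (\Omega)h|$ are finite versus $\aleph_0$ (noting that by the identity, if $h$ is a bijection then so are both $f$ and $g$, so the case where $h$ has no forward cycle forces $f,g$ to have none either, and one is essentially reproving Droste's theorem on that infinite cycle).

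I expect the main obstacle to be the organization of the gluing argument rather than any single hard step: one must partition $\Omega$ into $f'$-invariant-and-$g'$-invariant blocks on which the local gadgets live, ensure every point of $\Omega$ is used exactly once, and verify that the resulting global $f'g'$ has exactly the cycle-counting invariants $(h)\C_n$, $(h)\C_{\mathrm{open}}$, $(h)\C_{\mathrm{fwd}}$ of $h$ — at which point $f'g' \sim h$ by the remark following Definition~\ref{cycle number}. A subtle point worth isolating as a preliminary lemma is the construction, on a single infinite cycle's worth of points, of two injections with one or two forward cycles between them whose product realizes an arbitrary prescribed (countable) multiset of finite cycles plus a controlled number of open and forward cycles; once that normal-form lemma is in hand, the global assembly is routine cardinality bookkeeping. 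Finally, since the whole argument takes place on a fixed countable $\Omega$ but only ever uses $|\Omega| = \aleph_0$ and the cycle-invariants, Lemma~\ref{pass} lets us transport the constructed $f', g'$ back to the original $\Omega$ without loss of generality, and Lemma~\ref{prod-conj-equiv} then delivers the desired $a, b \in \Sym(\Omega)$.
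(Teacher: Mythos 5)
Your framing is correct and coincides with the paper's: necessity follows from Lemma~\ref{coimage} together with Proposition~\ref{decomposition} and Lemma~\ref{fwd cycles}, and by Lemma~\ref{prod-conj-equiv} sufficiency reduces to producing $f'\sim f$, $g'\sim g$ with $f'g'\sim h$ (after which Lemma~\ref{pass} lets one work on any convenient copy of a countable set). But from that reduction onward your text is a plan, not a proof, and the gap sits exactly where all the mathematical content of this result lives. The ``normal-form lemma'' you defer --- two injections with prescribed numbers of forward cycles whose product realizes an arbitrary prescribed countable multiset of finite cycles together with controlled numbers of open and forward cycles --- is precisely what the paper spends Sections~\ref{fin section}--\ref{construction section} establishing, via explicit local surgeries (Lemmas~\ref{fin h}, \ref{fin f}, \ref{fin g}, \ref{open f}, \ref{open g}, \ref{fwd f}, \ref{fwd g}) and two delicate anchor constructions (Lemma~\ref{open h}, Lemma~\ref{open h 2}, Corollary~\ref{open h 2.5}) whose cycle structure has to be verified element by element. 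None of your gadgets is written down or checked. In particular, the crux for this corollary (where $f$, $g$, $h$ are all non-bijections) is that a map consisting of a single forward cycle times another single forward cycle can be arranged to have exactly two forward cycles and \emph{any} prescribed number $K\leq\aleph_0$ of open cycles; this is far from obvious and is why the paper needs the crossed constructions drawn in Figures 1--4. Likewise, ``splitting off a finite cycle by inserting a defect'' must be done without disturbing the $\sim$-classes of $f'$ and $g'$, which forces the surgery to happen inside infinite cycles of all three maps simultaneously; that is the role of the hypothesis $|\Gamma_f\cap\Gamma_g\cap\Gamma_h|=\aleph_0$ in the paper's lemmas, which must also be re-established at every stage so the lemmas can be chained. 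Asserting that the bookkeeping ``should'' work does not discharge any of this, so the proposal does not yet prove the statement.

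Two smaller remarks. First, for this corollary the case analysis you sketch partly evaporates: since $f,g,h\notin\Sym(\Omega)$, Lemma~\ref{fwd cycles} gives each of them a forward cycle, so the Droste-type case (no forward cycles) never arises here; the paper obtains the corollary as an immediate special case of Theorem~\ref{comp}. Second, be careful with the claim that a single open cycle of $f'$ times a single open cycle of $g'$ suffices as a building block: when $f$ and $g$ are non-bijections you are obliged to use forward cycles of $f'$ and $g'$ in the correct (possibly infinite) quantities, since $(f'g')\C_{\mathrm{fwd}}$ is forced to equal $(f')\C_{\mathrm{fwd}}+(g')\C_{\mathrm{fwd}}$ by Lemmas~\ref{fwd cycles} and~\ref{coimage}, and the finite and open cycles of $h$ must be generated on top of that constraint.
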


While proving Theorem~\ref{comp}, we shall assume that $f$ and $g$ are not both permutations, since in that case our result is a consequence of the following theorem from~\cite{Droste2}. (Let us recall that the {\em support} of a map $f \in \Inj(\Omega)$ is the set $\{\alpha \in \Omega : (\alpha)f \neq \alpha\}$.)

\begin{theorem}[Droste] \label{droste theorem}
Let $\, \Omega$ be a countably infinite set, and let $f,g,h \in \Sym(\Omega)$ be any three permutations that have infinite support. If both $f$ and $g$ have at least one infinite cycle, then $h = afa^{-1}bgb^{-1}$ for some $a, b \in \Sym(\Omega)$.
\end{theorem}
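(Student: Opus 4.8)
The ``only if'' direction I would dispatch immediately. Given $h = afa^{-1}bgb^{-1}$, put $f' = afa^{-1}$ and $g' = bgb^{-1}$. Proposition~\ref{decomposition} gives $f' \sim f$ and $g' \sim g$, so --- equivalent maps having equally many forward cycles --- Lemma~\ref{fwd cycles} yields $|\Omega \setminus (\Omega)f'| = (f')\C_{\mathrm{fwd}} = (f)\C_{\mathrm{fwd}} = |\Omega \setminus (\Omega)f|$ and likewise $|\Omega \setminus (\Omega)g'| = |\Omega \setminus (\Omega)g|$. Since $h = f'g'$, Lemma~\ref{coimage} then gives $|\Omega \setminus (\Omega)h| = |\Omega \setminus (\Omega)f'| + |\Omega \setminus (\Omega)g'| = |\Omega \setminus (\Omega)f| + |\Omega \setminus (\Omega)g|$.

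For the ``if'' direction I would use Lemma~\ref{prod-conj-equiv} to reduce to constructing $f' \sim f$ and $g' \sim g$ with $f'g' \sim h$. First I would clear away the case $f, g \in \Sym(\Omega)$: there the cardinality equation forces $h$ surjective, hence a permutation, and all three maps have infinite support (an infinite cycle already has infinite support), so Droste's Theorem~\ref{droste theorem} finishes it. Hence I may assume that one of $f, g$, say $f$, is not a bijection, equivalently (by Lemma~\ref{fwd cycles}) has at least one forward cycle.

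The plan for the remaining cases is to build $f'$ and $g'$ piecewise. I would partition $\Omega$ into countably many blocks, each to be invariant under both $f'$ and $g'$ --- so that $f'g'$ also preserves the partition and may be prescribed block by block --- and on each block install one gadget from a short fixed list, each realizing a prescribed triple of local cycle types for $(f', g', f'g')$. The list would include, on a $\Z$- or $\N$-shaped block: $(\text{open}, \text{open}, \text{identity})$ via $g'|_\Delta = (f'|_\Delta)^{-1}$; $(\text{open}, \text{open}, \text{open})$; $(\text{open}, \text{open}, \text{countably many }n\text{-cycles})$ for each $n$; $(\text{forward}, \text{forward}, \text{two forward cycles})$, e.g.\ $f'|_\Delta = g'|_\Delta : i \mapsto i+1$ on $\N$, whence $(f'g')|_\Delta : i \mapsto i+2$ splits $\N$ into the two forward cycles on the evens and the odds; $(\text{forward}, \text{open}, \text{forward})$; and a few further mixed variants built in the same spirit --- each an explicit, if slightly laborious, construction on $\Z$ or $\N$, with the coimage of the product automatically correct by Lemma~\ref{coimage}. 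I would then choose how many blocks of each type to use, and how the open cycles, finite cycles and forward cycles of $f$, of $g$, and of $h$ are routed through the blocks, so that summing the local $f'$-types over all blocks recovers the cycle type of $f$, similarly for $g$, and summing the local $(f'g')$-types recovers that of $h$; once this is arranged, $f' \sim f$, $g' \sim g$, $f'g' \sim h$ follow from the criterion recorded just after Definition~\ref{cycle number}, and Lemma~\ref{prod-conj-equiv} completes the proof.

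The hard part will be exactly this bookkeeping, which I would organize by cases: according to whether $|\Omega \setminus (\Omega)h|$ (equivalently $(h)\C_{\mathrm{fwd}}$) is finite or $\aleph_0$, and according to how the counts $(h)\C_{\mathrm{open}}$ and $(h)\C_n$ sit relative to the corresponding counts for $f$ and $g$. The coimage equation is what balances the forward-cycle totals --- each forward cycle of $h$ gets charged, through a gadget, to a forward cycle of $f$ or of $g$ --- while the assumptions that $f$ is not a bijection and that $h$ has an infinite cycle are what always leave room to absorb any surplus. The genuinely delicate subcases are those where some count for $h$ is finite and small while the matching material coming from $f$ and $g$ is plentiful (or the reverse): there I would have to verify that the gadget list is rich enough to hit the \emph{exact} target counts, not merely enough of each, and to handle boundary configurations --- for instance $f$ carrying a forward cycle but no open cycle and only finitely many fixed points --- where no honest permutation part survives and the entire construction must be carried by gadgets rather than by a further appeal to Droste.
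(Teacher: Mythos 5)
The statement you were asked to prove is Droste's theorem itself: for permutations $f,g,h\in\Sym(\Omega)$ of infinite support with $f$ and $g$ each having an infinite cycle, $h=afa^{-1}bgb^{-1}$ for some $a,b\in\Sym(\Omega)$. What you have written is instead a proof sketch of the paper's main result (Theorem~\ref{comp} and Corollary~\ref{general comp}), and at the one point where the actual content of Droste's theorem is needed --- the case $f,g\in\Sym(\Omega)$ --- you write ``so Droste's Theorem~\ref{droste theorem} finishes it.'' As a proof of the statement in question this is circular: the all-permutations case is not a degenerate case to be cleared away; it \emph{is} the theorem. (Note also that the statement has no ``only if'' direction, and for permutations the coimage identity you verify is vacuous, since all three coimages are empty.)

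Everything in your construction that does real work --- the coimage bookkeeping via Lemma~\ref{coimage}, the forward-cycle gadgets, the assumption that $f$ is not a bijection ``leaving room to absorb any surplus'' --- lives in the non-permutation regime and is unavailable here. To prove Droste's theorem by your block method you would need gadgets in which $f'$ and $g'$ are bijections on every block (no forward cycles anywhere), and the routing argument would have to produce an arbitrary permutation $h$ of infinite support, including ones with no infinite cycle at all (e.g.\ a product of infinitely many disjoint transpositions) and with arbitrary prescribed counts $(h)\C_n$ and $(h)\C_{\mathrm{open}}$; that is precisely the delicate combinatorial content of Droste's argument, none of which appears in your sketch. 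For what it is worth, the paper does not prove this theorem either: it is imported verbatim from~\cite{Droste2} and used as a black box, exactly as you use it. So your outline is a reasonable summary of how the paper proves Theorem~\ref{comp}, but it is not, and cannot be repaired into, a proof of the statement you were given without supplying the permutation-only construction from scratch.
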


Actually, again in view of Lemma~\ref{fwd cycles}, Theorem~\ref{droste theorem} allows us to state our main result in the following more general (though also more complicated) way.

\begin{corollary} \label{general comp}
Let $\, \Omega$ be a countably infinite set, and let $f, g, h \in \Inj(\Omega)$ be any three maps that have infinite support. Further, suppose that $f$ and $g$ both have at least one infinite cycle. Then there exist permutations $a, b \in \Sym(\Omega)$ such that $h=afa^{-1}bgb^{-1}$ if and only if $\, |\Omega \setminus (\Omega)f| + |\Omega \setminus (\Omega)g| = |\Omega \setminus
(\Omega)h|.$
\end{corollary}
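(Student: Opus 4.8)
The plan is to deduce this corollary from Theorem~\ref{comp} together with Droste's Theorem~\ref{droste theorem} by a short case analysis; the only point that needs care is the possibility that $h$ has infinite support but no infinite cycle.

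For the ``only if'' direction (which requires no hypothesis on cycles at all), I would argue directly as in the easy half of Theorem~\ref{comp}: if $h = afa^{-1}bgb^{-1}$, then by Lemma~\ref{prod-conj-equiv} there are $f', g' \in \Inj(\Omega)$ with $f' \sim f$, $g' \sim g$, and $f'g' \sim h$. Equivalent maps have the same number of forward cycles, so by Lemma~\ref{fwd cycles} one has $|\Omega \setminus (\Omega)f'| = |\Omega \setminus (\Omega)f|$, $|\Omega \setminus (\Omega)g'| = |\Omega \setminus (\Omega)g|$, and $|\Omega \setminus (\Omega)f'g'| = |\Omega \setminus (\Omega)h|$; applying Lemma~\ref{coimage} to the product $f'g'$ then yields $|\Omega \setminus (\Omega)h| = |\Omega \setminus (\Omega)f| + |\Omega \setminus (\Omega)g|$.

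For the ``if'' direction, assume $|\Omega \setminus (\Omega)f| + |\Omega \setminus (\Omega)g| = |\Omega \setminus (\Omega)h|$ and split into two cases. If at least one of $f, g$ — say $f$ — is not a permutation, then $|\Omega \setminus (\Omega)f| \geq 1$, hence $|\Omega \setminus (\Omega)h| \geq 1$, so by Lemma~\ref{fwd cycles} the map $h$ has a forward cycle, and in particular an infinite cycle. Since $f$ and $g$ are assumed to have infinite cycles, all three of $f, g, h$ then satisfy the hypotheses of Theorem~\ref{comp}, which supplies the required $a, b \in \Sym(\Omega)$; the argument is symmetric if instead $g$ is not a permutation. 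In the remaining case both $f$ and $g$ are permutations, so $|\Omega \setminus (\Omega)f| = |\Omega \setminus (\Omega)g| = 0$, whence $|\Omega \setminus (\Omega)h| = 0$; thus $h$ is surjective, hence (being injective) a permutation, and it has infinite support by hypothesis. Now $f, g, h$ are permutations of infinite support with $f$ and $g$ possessing infinite cycles, so Droste's Theorem~\ref{droste theorem} applies and produces $a, b$.

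I do not anticipate a genuine obstacle: all the substance lies in Theorem~\ref{comp} and Theorem~\ref{droste theorem}, and what is left is essentially bookkeeping. The one thing worth noticing — and the reason Theorem~\ref{comp} does not by itself give the corollary — is that $h$ may have infinite support yet no infinite cycle (for instance, an infinite product of disjoint transpositions); the case analysis above shows this can occur only when $f$ and $g$ are both permutations, and precisely in that situation Droste's theorem, whose only hypothesis on $h$ is infinite support, closes the gap.
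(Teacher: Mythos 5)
Your proof is correct and follows essentially the same route the paper intends: the ``only if'' half via Proposition~\ref{decomposition}, Lemma~\ref{fwd cycles}, and Lemma~\ref{coimage}, and the ``if'' half by observing (via Lemma~\ref{fwd cycles}) that $h$ automatically has an infinite cycle unless $f$ and $g$ are both permutations, in which case Droste's Theorem~\ref{droste theorem} covers the remaining case.
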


Thus, we may weaken the hypothesis in Theorem~\ref{comp} that $h$ must have an infinite cycle. However, some assumptions on the cycle decompositions of $f$, $g$, and $h$, along the above lines, are necessary for the statement in Theorem~\ref{comp} (and Corollary~\ref{general comp}) to hold. For example, let $f, h \in \Inj(\Omega)$ be maps such that $|\Omega \setminus (\Omega)f| = |\Omega \setminus (\Omega)h|$, and so that $f$ fixes infinitely many elements of $\Omega$ but $h$ does not. Also, let $g \in \Sym(\Omega)$ be any element having finite support. Then $|\Omega \setminus (\Omega)f| + |\Omega \setminus (\Omega)g| = |\Omega \setminus (\Omega)h|$, but $h \neq afa^{-1}bgb^{-1}$ for any $a, b \in \Sym(\Omega),$ since $afa^{-1}bgb^{-1}$ must fix infinitely many elements of $\Omega.$ It is also well known \cite[Theorem 2.2]{Bertram} that taking $f = g$ to be a permutation that has one (open) infinite cycle and no other cycles, one can find an element $h \in \Sym(\Omega)$ (having finite support) that cannot be expressed in the form $afa^{-1}bgb^{-1}.$ Another well-known fact \cite[Theorem 1]{Moran} is that taking $f = g$ to be a permutation that has no cycles of cardinality greater than $2$, one can find an element $h \in \Sym(\Omega)$ that cannot be expressed in the form $afa^{-1}bgb^{-1}$.

Applying a standard argument to Corollary~\ref{general comp}, we can further generalize Theorem~\ref{comp} to arbitrary infinite sets $\Omega$. To make this generalization easier to state, let us introduce one more piece of notation. Given a map $f \in \Inj(\Omega)$, let $\Upsilon_f \subseteq \Omega$ denote the union of all the infinite cycles of $f$.

\begin{corollary} \label{uncountable}
Let $\, \Omega$ be an infinite set, and let $f, g, h \in \Inj(\Omega)$ be any three maps that have support of cardinality $K$, where $\, \aleph_0 \leq K \leq |\Omega|$. Further, suppose that $\, |\Upsilon_f| = K = |\Upsilon_g|$. Then there exist permutations $a, b \in \Sym(\Omega)$ such that $h=afa^{-1}bgb^{-1}$ if and only if $\, |\Omega \setminus (\Omega)f| + |\Omega \setminus (\Omega)g| = |\Omega \setminus (\Omega)h|.$
\end{corollary}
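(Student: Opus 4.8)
The plan is to handle the ``only if'' direction by a cardinality computation, and then reduce the ``if'' direction to the already-established countable case, Corollary~\ref{general comp}, by slicing $\Omega$ into countably infinite blocks. For necessity, suppose $h=afa^{-1}bgb^{-1}$ with $a,b\in\Sym(\Omega)$. Since $a,b$ are bijections, $|\Omega\setminus(\Omega)(afa^{-1})|=|\Omega\setminus(\Omega)f|$ and $|\Omega\setminus(\Omega)(bgb^{-1})|=|\Omega\setminus(\Omega)g|$, and applying Lemma~\ref{coimage} to the product $(afa^{-1})(bgb^{-1})$ gives $|\Omega\setminus(\Omega)f|+|\Omega\setminus(\Omega)g|=|\Omega\setminus(\Omega)h|$; this direction uses none of the hypotheses on supports or cycles. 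For sufficiency, assume the cardinality equation holds. If $K=\aleph_0$ this is exactly Corollary~\ref{general comp}, so assume $K$ is uncountable. Since $f,g,h$ all act as the identity outside $\Omega':=\supp f\cup\supp g\cup\supp h$, since $\Omega'$ is invariant under each of them, and since $\Omega\setminus(\Omega)f=\Omega'\setminus(\Omega')f$ (likewise for $g,h$), any pair of permutations of $\Omega'$ witnessing the conclusion for the restrictions $f|_{\Omega'},g|_{\Omega'},h|_{\Omega'}$ extends by the identity to a pair witnessing it on $\Omega$; all hypotheses (including the cardinality equation, and $|\Upsilon_{f|_{\Omega'}}|=|\Upsilon_{g|_{\Omega'}}|=K$) transfer. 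Hence we may assume $\Omega=\Omega'$, i.e.\ $|\Omega|=K$.

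Now fix a partition $\Omega=\bigsqcup_{i\in I}\Omega_i$ into countably infinite sets with $|I|=K$, possible since $K\cdot\aleph_0=K$. The heart of the argument is to choose, for each $i\in I$, ``target'' cycle types $\tau_i^f,\tau_i^g,\tau_i^h$ of countable injective endomaps so that: (a) the disjoint union over $i\in I$ of the $\tau_i^f$ reproduces the cycle type of $f$ (matching $\C_n$ for all $n\in\Z_+$, as well as $\C_{\mathrm{open}}$ and $\C_{\mathrm{fwd}}$), and likewise for $g$ and $h$; (b) each of $\tau_i^f,\tau_i^g,\tau_i^h$ has infinite support, and $\tau_i^f$ and $\tau_i^g$ each have an infinite cycle; (c) the number of forward cycles in $\tau_i^f$ plus the number in $\tau_i^g$ equals the number in $\tau_i^h$, for every $i$.

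Granting such a choice, the proof concludes quickly. Realize $\tau_i^f,\tau_i^g,\tau_i^h$ by maps $f_i,g_i,h_i\in\Inj(\Omega_i)$. By (b), (c), and Lemma~\ref{fwd cycles}, the triple $f_i,g_i,h_i$ satisfies the hypotheses of Corollary~\ref{general comp} on the countably infinite set $\Omega_i$, so there exist $a_i,b_i\in\Sym(\Omega_i)$ with $h_i=a_if_ia_i^{-1}b_ig_ib_i^{-1}$. Put $F=\bigsqcup_i a_if_ia_i^{-1}$ and $G=\bigsqcup_i b_ig_ib_i^{-1}$ in $\Inj(\Omega)$. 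Each $\Omega_i$ is $F$- and $G$-invariant, so the cycle type of $F$ is the disjoint union of the cycle types of the maps $a_if_ia_i^{-1}\sim f_i$, which by (a) is the cycle type of $f$; thus $F\sim f$, and similarly $G\sim g$. Also $FG=\bigsqcup_i a_if_ia_i^{-1}b_ig_ib_i^{-1}=\bigsqcup_i h_i$, whose cycle type is $\bigsqcup_i\tau_i^h$, the cycle type of $h$, so $FG\sim h$. Lemma~\ref{prod-conj-equiv} then produces $a,b\in\Sym(\Omega)$ with $h=afa^{-1}bgb^{-1}$, as required.

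The only real obstacle is arranging (a)--(c). Using $|\Upsilon_f|=K=|\Upsilon_g|$ and $K$ uncountable, $f$ and $g$ each have exactly $K$ infinite cycles, and one checks that $h$ has exactly $K$ nontrivial cycles. Because $|\Omega\setminus(\Omega)h|=|\Omega\setminus(\Omega)f|+|\Omega\setminus(\Omega)g|$, Lemma~\ref{fwd cycles} lets us split the set of forward cycles of $h$ into a set of size $(f)\C_{\mathrm{fwd}}$ and a set of size $(g)\C_{\mathrm{fwd}}$ and fix bijections from these onto the forward cycles of $f$ and of $g$. One then distributes the cycles of $f$, $g$, and $h$ among the $\Omega_i$, placing each forward cycle of $f$ (resp.\ $g$) in the same block as its partner forward cycle of $h$, while also giving each block at least one infinite cycle of $f$, at least one infinite cycle of $g$, and infinite $h$-support, and keeping each block countable. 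Then (c) holds block-by-block automatically, and (a), (b) are arranged by inspection; carrying this out amounts to routine — if mildly tedious — cardinal arithmetic, split into cases according to whether $(f)\C_{\mathrm{fwd}}$ and $(g)\C_{\mathrm{fwd}}$ are $0$, positive finite, infinite, or equal to $K$. Everything else is bookkeeping around Corollary~\ref{general comp}.
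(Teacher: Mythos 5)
Your proposal is correct and follows essentially the same route as the paper's proof: necessity via Lemma~\ref{coimage} (conjugation preserving the coimage cardinality), and sufficiency by cutting down to a set of size $K$, partitioning it into $K$ countably infinite blocks carrying prescribed cycle types whose per-block coimage equation is arranged by pairing forward cycles of $h$ with those of $f$ and $g$ (Lemma~\ref{fwd cycles}), applying Corollary~\ref{general comp} blockwise, and gluing via Lemma~\ref{prod-conj-equiv}. The only nitpick is that when $K=\aleph_0$ but $\Omega$ is uncountable, Corollary~\ref{general comp} does not apply verbatim; one must first perform the same restriction to $\supp f\cup\supp g\cup\supp h$ (or to a countable invariant subset, as the paper does with $\Sigma$), which your own reduction in the next sentence already provides.
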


\begin{proof}
If $h=afa^{-1}bgb^{-1}$ for some $a, b \in \Sym(\Omega)$, then, by  Lemma~\ref{coimage}, we have $|\Omega \setminus (\Omega)f| + |\Omega \setminus (\Omega)g| = |\Omega \setminus (\Omega)h|$. (For,
$|\Omega \setminus (\Omega)f| = |\Omega \setminus (\Omega)afa^{-1}|$ and $|\Omega \setminus (\Omega)g| = |\Omega \setminus (\Omega)aga^{-1}|$, by Proposition~\ref{decomposition} and Lemma~\ref{fwd cycles}.)

For the converse, let $\Sigma \subseteq \Omega$ be a subset of cardinality $K$ which contains the support of $h$ (if $K = |\Omega|$, then we take $\Sigma = \Omega$). Then, in particular, $(\Sigma)h \subseteq \Sigma$, since $h$ is injective. Upon replacing $f$ and $g$ with conjugates, we may assume that their supports are contained in $\Sigma$ as well. If $K = \aleph_0$, then, viewing $f$, $g$, and $h$ as elements of $\Inj(\Sigma)$, the desired conclusion follows from Corollary~\ref{general comp}. Therefore, let us assume that $\aleph_0 < K$ and write $\Sigma = \bigcup_{i \in K} \Omega_i$, where the union is disjoint, and each $\Omega_i$ has cardinality $\aleph_0$. It follows from our hypotheses that $f$ and $g$ must both have $K$-many infinite cycles. We can therefore find maps $\bar{f},\bar{g},\bar{h} \in \Inj(\Omega)$ whose supports are contained in $\Sigma$, such that for all $i \in K$ the following conditions are satisfied:
\begin{enumerate}
\item[(1)] $f \sim \bar{f}$, $g \sim \bar{g}$, and $h \sim \bar{h}$;
\item[(2)] each of $\bar{f}$, $\bar{g}$, and $\bar{h}$ takes $\Omega_i$ to itself, moving infinitely many elements of $\Omega_i$;
\item[(3)] $\bar{f}$ and $\bar{g}$ each have at least one infinite cycle consisting of elements of $\Omega_i$;
\item[(4)] $|\Omega \setminus (\Omega)\bar{f}_i| + |\Omega \setminus (\Omega)\bar{g}_i| = |\Omega \setminus (\Omega)\bar{h}_i|$, where $\bar{f}_i,$ $\bar{g}_i,$ and $\bar{h}_i$ denote the restrictions of $\bar{f},$ $\bar{g},$ and $\bar{h}$, respectively, to $\Omega_i$.
\end{enumerate}
By Corollary~\ref{general comp}, for each $i \in K$ we can find permutations $a_i, b_i \in \Sym(\Omega_i)$ such that $\bar{h}_i = a_i\bar{f}_ia^{-1}_i b_i\bar{g}_ib^{-1}_i.$ Letting $a, b \in \Sym(\Omega)$ be permutations that act as the identity on $\Omega \setminus \Sigma$, such that the restriction of $a,$ respectively $b,$ to each $\Omega_i$ is $a_i,$ respectively $b_i,$ we have $\bar{h} = a\bar{f}a^{-1} b\bar{g}b^{-1}$. The desired conclusion now follows from Lemma~\ref{prod-conj-equiv} (and Proposition~\ref{decomposition}).
\end{proof}

The assumption in the above corollary that the three maps have support of equal size is necessary. We can see this from an example similar to one given above. Let $\Omega$ be an uncountable set, and let $f, g \in  \Inj(\Omega)$ be maps that both fix all but countably many elements of $\Omega$.  Then any conjugate of either map will likewise fix all but countably many elements of $\Omega$, as will any product of such conjugates.  Hence, if we take $h \in  \Inj(\Omega)$ to be a map that moves uncountably many elements of $\Omega$ (and satisfies $|\Omega \setminus (\Omega)f| + |\Omega \setminus (\Omega)g| = |\Omega \setminus (\Omega)h|$), then it would be impossible to represent it as a product of conjugates of $f$ and $g$.

As mentioned in the Introduction, perhaps the earliest result on the subject under discussion is the following.

\begin{theorem}[Ore]
Let $\, \Omega$ be an infinite set. Then every element of $\, \Sym(\Omega)$ can be expressed as a commutator; that is, in the form $afa^{-1}f^{-1}$ for some $a, f \in \Sym(\Omega)$. Equivalently, every $h \in\Sym(\Omega)$ can be expressed in the form $h = afa^{-1}bfb^{-1}$ for some $a, b, f \in \Sym(\Omega)$.
\end{theorem}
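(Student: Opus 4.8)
\emph{The plan.} The two forms in the statement are equivalent, so I would first reduce the theorem to showing that every $h \in \Sym(\Omega)$ is an honest commutator $afa^{-1}f^{-1}$. For the equivalence: any $f \in \Sym(\Omega)$ has the same cycle type as $f^{-1}$ (finite cycles go to finite cycles of the same length, open cycles to open cycles, and there are no forward cycles since $f$ is onto), so by Proposition~\ref{decomposition} we may write $f^{-1} = bfb^{-1}$; hence every commutator $afa^{-1}f^{-1}$ already has the form $afa^{-1}bfb^{-1}$. Conversely, given $h = afa^{-1}bfb^{-1}$, the factor $bfb^{-1}$ is conjugate to $f$, hence to $f^{-1}$, hence to $(afa^{-1})^{-1}$, so $bfb^{-1} = c(afa^{-1})^{-1}c^{-1}$ for some $c \in \Sym(\Omega)$; writing $k = afa^{-1}$ we then get $h = kck^{-1}c^{-1}$, a commutator.

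\emph{Reduction to a cycle-type problem.} By Proposition~\ref{decomposition}, $h = afa^{-1}f^{-1}$ for some $a \in \Sym(\Omega)$ if and only if $hf$ is conjugate to $f$, i.e.\ $hf \sim f$. So it suffices to produce, for an arbitrary $h \in \Sym(\Omega)$, a permutation $f \in \Sym(\Omega)$ for which left multiplication by $h$ does not disturb the cycle type.

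\emph{Choice of $f$.} I would take $f$ to consist of nothing but open (two-sided infinite) cycles, with $\supp(h)$ spread as thinly as possible across them: write $\Omega = \bigsqcup_{\lambda} \Omega_\lambda$ as a disjoint union of $|\Omega|$-many countably infinite subsets, arranged so that no two elements of $\supp(h)$ lie in the same $\Omega_\lambda$ (possible since $|\supp(h)| \le |\Omega|$), and let $f$ act on each $\Omega_\lambda$ as a single bi-infinite cycle. Then $f \in \Sym(\Omega)$, and $f$ has exactly $|\Omega|$ open cycles and no finite or forward cycles.

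\emph{The verification, and where the difficulty lies.} It remains to check that $hf \sim f$, which is the heart of the argument. Any $\Omega_\lambda$ disjoint from $\supp(h)$ is still a single open cycle of $hf$. For the rest, take a cycle $(s_1\,s_2\,\cdots\,s_k)$ of the restriction of $h$ to $\supp(h)$; it has length $k \ge 2$ (as $\supp(h)$ contains no fixed points) and is either finite or two-sided infinite (as $h$, being onto, has no forward cycles), and by construction its entries lie in pairwise distinct $f$-cycles $C_1, \dots, C_k$. Writing $C_i = \{c^{(i)}_j\}_{j \in \Z}$ with $s_i = c^{(i)}_0$ and $(c^{(i)}_j)f = c^{(i)}_{j+1}$, a direct computation shows that on $\bigcup_i C_i$ the map $hf$ has precisely the $k$ orbits $\{c^{(i)}_j : j \le 0\} \cup \{c^{(i+1)}_j : j \ge 1\}$ (indices read mod $k$), each again a bi-infinite cycle. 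Running over all cycles of $h$ restricted to $\supp(h)$ and over the untouched $\Omega_\lambda$, I would conclude that $hf$, like $f$, has $|\Omega|$ open cycles and nothing else, so $hf \sim f$ by the criterion following Definition~\ref{cycle number}. The crux is exactly this verification: one must make sure the ``merge, then re-split'' effect of $h$ never manufactures a fixed point or a finite cycle in $hf$ — which is precisely why the points of $\supp(h)$ were placed in \emph{distinct} $f$-cycles (two points of one $h$-cycle inside a single $f$-cycle would create a finite cycle in $hf$), and why it matters that a permutation has no forward cycles. When $\supp(h)$ is infinite one could instead sidestep the computation: apply Corollary~\ref{uncountable} with $g = f$ an all-open-cycles permutation whose support has the same cardinality as that of $h$, the cardinality hypothesis holding because $|\Omega \setminus (\Omega)f| = |\Omega \setminus (\Omega)h| = 0$; only $h$ of finite support would then need the explicit $f$ above.
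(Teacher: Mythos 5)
First, a point of comparison: the paper does not prove this theorem at all --- it is quoted from Ore's article \cite{Ore} and used only later, in the proof of the final corollary --- so your argument can only be judged on its own terms and against the paper's toolkit. Your two reductions are sound: since a permutation $f$ has the same cycles (as sets) as $f^{-1}$ and no forward cycles, Proposition~\ref{decomposition} gives $f^{-1}=bfb^{-1}$, which makes the commutator form and the form $afa^{-1}bfb^{-1}$ interchangeable; and $h=afa^{-1}f^{-1}$ for some $a$ if and only if $hf\sim f$. The cycle computation for $hf$ (merging the touched open cycles along an $h$-cycle and re-splitting them into the same number of open cycles) is also correct under your hypotheses on $f$.

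The genuine flaw is the parenthetical claim that a partition of $\Omega$ into $|\Omega|$-many countably infinite pieces, each meeting $\supp(h)$ in at most one point, is ``possible since $|\supp(h)|\le|\Omega|$.'' Every piece that contains a support point consists otherwise of fixed points of $h$, so such a partition forces $|\Omega\setminus\supp(h)|\ge \aleph_0\cdot|\supp(h)|$; it is impossible, for instance, for any fixed-point-free $h$ (say a disjoint union of bi-infinite cycles covering $\Omega$, where $\supp(h)=\Omega$). And your verification genuinely uses this strong separation, since you evaluate $(c^{(i)}_j)hf=(c^{(i)}_j)f$ for all $j\neq 0$, i.e.\ you need each touched $f$-cycle to contain exactly one support point. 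So the explicit construction cannot serve as the main argument for arbitrary $h$. Your closing remark, however, supplies the repair, and with the case split made primary the proof is correct: if $\supp(h)$ is infinite, Corollary~\ref{uncountable} with $g=f$ an all-open-cycle permutation whose support has cardinality $|\supp(h)|$ (the coimage condition being $0+0=0$) yields $h=afa^{-1}bfb^{-1}$, and your first reduction converts this into a commutator --- this rests on Theorem~\ref{droste theorem} and the paper's machinery, none of which depends on Ore's theorem, so there is no circularity; if $\supp(h)$ is finite, then $h$ has $|\Omega|$-many fixed points, your partition does exist, and the explicit argument goes through. I would only insist that you state this division of cases up front rather than as an aside, and delete the false justification quoted above.
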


Using our results above, we can generalize this as follows.

\begin{corollary}
Let $\, \Omega$ be an infinite set. Then an element $h \in \Inj(\Omega)$ can be expressed in the form $h = afa^{-1}bfb^{-1}$, for some $f \in \Inj(\Omega)$ and $a, b \in \Sym(\Omega)$, if and only if $\, |\Omega \setminus (\Omega)h|$ is either an even integer or infinite.
\end{corollary}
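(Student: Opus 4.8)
The plan is to deduce this corollary from Corollary~\ref{general comp} (together with the weaker Corollary~\ref{uncountable} in the uncountable case), by treating the case $h \in \Sym(\Omega)$ via Ore's theorem and otherwise choosing a suitable $f$. First I would dispose of the ``only if'' direction: if $h = afa^{-1}bfb^{-1}$, then by Lemma~\ref{coimage} (applied twice, using that conjugation preserves $|\Omega \setminus (\Omega)\,\cdot\,|$ by Proposition~\ref{decomposition} and Lemma~\ref{fwd cycles}) we get $|\Omega \setminus (\Omega)h| = 2\,|\Omega \setminus (\Omega)f|$, which is either infinite or an even nonnegative integer. So the condition is necessary.

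For the ``if'' direction, suppose $|\Omega \setminus (\Omega)h|$ is even or infinite. If $h \in \Sym(\Omega)$, i.e.\ $|\Omega \setminus (\Omega)h| = 0$, then Ore's theorem gives the conclusion directly. Otherwise $h$ is not a bijection, so by Lemma~\ref{fwd cycles} it has at least one forward cycle, hence infinite support, and $K := |\Omega \setminus (\Omega)h| \geq 1$. The idea is to pick $f \in \Inj(\Omega)$ with $|\Omega \setminus (\Omega)f| = |\Omega \setminus (\Omega)h|/2$ (interpreting this as $|\Omega \setminus (\Omega)h|$ itself when that cardinal is infinite), chosen so that $f$ has an infinite cycle and infinite support, and so that $f$'s support has the same cardinality as $h$'s. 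Then $|\Omega \setminus (\Omega)f| + |\Omega \setminus (\Omega)f| = |\Omega \setminus (\Omega)h|$, and if $\Omega$ is countable Corollary~\ref{general comp} with $g = f$ applies; if $\Omega$ is uncountable, Corollary~\ref{uncountable} applies provided we also arrange $|\Upsilon_f| = |\mathrm{supp}(h)|$.

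The construction of $f$ is the main thing to check, and it is where a small case distinction is needed. When $|\Omega \setminus (\Omega)h|$ is a finite even number $2m$ with $m \geq 1$: let $f$ have $m$ forward cycles and, say, one additional open cycle (to ensure it has an infinite cycle even if we did not already want one), with all of $\Omega$ outside those cycles fixed; actually since each forward cycle is already infinite, $m \geq 1$ forward cycles suffice to give $f$ an infinite cycle, and $f$ automatically has infinite support. When $|\Omega \setminus (\Omega)h|$ is infinite, equal to some cardinal $K \leq |\Omega|$: we need $f$ with $|\Omega \setminus (\Omega)f| = K$ as well (since $K + K = K$), with $K$-many infinite cycles, and with $|\mathrm{supp}(f)| = |\mathrm{supp}(h)|$. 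Here one takes $f$ with $K$-many forward cycles living inside a subset of size $|\mathrm{supp}(h)|$ and fixing everything else; this is possible because $K \leq |\mathrm{supp}(h)| \leq |\Omega|$ and a union of $K$-many countable sets inside a set of size $|\mathrm{supp}(h)|$ still has size $|\mathrm{supp}(h)|$. Then $\Upsilon_f$ is exactly this union, of size $|\mathrm{supp}(h)|$, as required by Corollary~\ref{uncountable}.

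The main obstacle, such as it is, is bookkeeping the cardinal arithmetic in the infinite case: one must confirm that $f$ can simultaneously have coimage of size $K$, support of size $|\mathrm{supp}(h)|$, and $|\Upsilon_f| = |\mathrm{supp}(h)|$, and that $K + K = K$ and $|\mathrm{supp}(h)| \cdot \aleph_0 = |\mathrm{supp}(h)|$ make all these compatible — all of which hold for infinite cardinals. Everything else is a direct invocation of the corollaries already proved, so no genuinely new argument is required; the statement is essentially a packaging of the earlier results with the observation that $2\kappa$ ranges exactly over the even integers and infinite cardinals as $\kappa$ ranges over all cardinals $\leq |\Omega|$.
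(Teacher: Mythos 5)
Your overall route is the same as the paper's: the ``only if'' direction via Lemma~\ref{coimage}, the case $|\Omega \setminus (\Omega)h| = 0$ via Ore's theorem, and otherwise an application of Corollary~\ref{uncountable} (equivalently, Corollary~\ref{general comp} when $\Omega$ is countable) with $g = f$, where $f$ is chosen to have the right coimage, support, and $\Upsilon_f$. The paper simply asserts the existence of such an $f$; you try to construct it explicitly, and this is where your argument has a genuine error. In the infinite-coimage case you claim that ``a union of $K$-many countable sets inside a set of size $|\mathrm{supp}(h)|$ still has size $|\mathrm{supp}(h)|$.'' That is false: a union of $K$-many countable sets has cardinality at most $K \cdot \aleph_0 = K$, regardless of what larger set it sits inside. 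So if $K = |\Omega \setminus (\Omega)h| < |\mathrm{supp}(h)|$ (for instance $\Omega$ of size $\aleph_1$, $h$ with countably many forward cycles and $\aleph_1$-many open cycles), your $f$, consisting of $K$-many forward cycles with everything else fixed, has $|\mathrm{supp}(f)| = |\Upsilon_f| = K < |\mathrm{supp}(h)|$, and the hypotheses of Corollary~\ref{uncountable} (equal support cardinalities and $|\Upsilon_f| = K = |\Upsilon_g|$ for that common support cardinality) are not met. The same defect appears in your finite-even case: $m$ forward cycles plus one open cycle give $f$ only countable support, which does not suffice when $\mathrm{supp}(h)$ is uncountable, even though your general plan correctly demanded $|\mathrm{supp}(f)| = |\mathrm{supp}(h)|$.

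The repair is easy and keeps your structure intact: in both cases give $f$ the required number of forward cycles ($m$ in the finite case, $K$ in the infinite case, so that $|\Omega \setminus (\Omega)f|$ is correct by Lemma~\ref{fwd cycles}) \emph{and additionally} $|\mathrm{supp}(h)|$-many open cycles, fixing the rest of $\Omega$. Open cycles contribute nothing to $\Omega \setminus (\Omega)f$, while they inflate $\mathrm{supp}(f)$ and $\Upsilon_f$ to cardinality $|\mathrm{supp}(h)|$, which is exactly what Corollary~\ref{uncountable} needs. With that correction your proof matches the paper's.
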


\begin{proof}
As in the proof of Corollary~\ref{uncountable}, if $h=afa^{-1}bfb^{-1}$ for some $f \in \Inj(\Omega)$ and $a, b \in \Sym(\Omega)$, then $|\Omega \setminus (\Omega)f| + |\Omega \setminus (\Omega)f| = |\Omega \setminus (\Omega)h|$, from which the ``only if" direction follows. 

For the converse, let $h \in \Inj(\Omega)$ be such that $|\Omega \setminus (\Omega)h|$ is either an even integer or infinite. If $|\Omega \setminus (\Omega)h| = 0$, and hence $h \in \Sym(\Omega)$, then the desired conclusion follows from the above theorem of Ore. Therefore, suppose that $|\Omega \setminus (\Omega)h| \neq 0$, so that the cardinality $K$ of the support of $h$ is infinite (by Lemma~\ref{fwd cycles}). Let $f \in \Inj(\Omega)$ be any map with support of cardinality $K$, $|\Upsilon_f| = K$, and $|\Omega \setminus (\Omega)f| + |\Omega \setminus (\Omega)f| = |\Omega \setminus (\Omega)h|$ (which amounts to $|\Omega \setminus (\Omega)f| = |\Omega \setminus (\Omega)h|$, if $|\Omega \setminus (\Omega)h|$ is infinite). We then obtain the desired result from Corollary~\ref{uncountable}.
\end{proof}

Let us conclude this section by giving another perspective on the results mentioned here. For all $f \in \Inj(\Omega)$, let $f^S = \{afa^{-1} : a \in S\}$, where $S = \Sym(\Omega)$. Now, for any $f, g \in \Inj(\Omega)$, we have $f^Sg^S \subseteq (SfS)(SgS) = SfgS$, since for any $h \in \Inj(\Omega)$, $ShS = \{h' \in \Inj(\Omega) : |\Omega \setminus (\Omega)h| = |\Omega \setminus (\Omega)h'|\}$. That is, $f^Sg^S \subseteq SfgS$. The results in this section can be viewed as saying that this inclusion is actually an equality for most choices of $f$ and $g$.

\section{The plan} \label{plan section}

The remainder of the paper is devoted to the proof of
Theorem~\ref{comp}. The ``only if" direction follows (even without
any restrictions on $f, g, h \in \Inj(\Omega)$) from
Lemma~\ref{coimage}, with the help of Proposition~\ref{decomposition} and Lemma~\ref{fwd cycles}, as we have already seen in the proof of Corollary~\ref{uncountable}. Thus,
we only need to prove the ``if" direction.

Now, let $f, g, h \in \Inj(\Omega)$ be any three maps, each having at least one infinite cycle, such that $\, |\Omega \setminus (\Omega)f| + |\Omega \setminus (\Omega)g| = |\Omega \setminus (\Omega)h|$. By Lemma~\ref{prod-conj-equiv}, to show that there exist permutations $a, b \in \Sym(\Omega)$ such that $h = afa^{-1}bgb^{-1}$, it suffices to construct elements $f', g' \in  \Inj(\Omega)$ satisfying $f' \sim f$, $g' \sim g$, and $f'g' \sim h$. Our strategy in proving the ``if" direction of the theorem, in the cases not covered by Theorem~\ref{droste theorem}, will therefore be as follows. For every three relevant $\sim$-equivalence classes $A, B, C$, such that for all $f \in A$, $g \in B$, $h \in C$ one has $\, |\Omega \setminus (\Omega)f| + |\Omega \setminus (\Omega)g| = |\Omega \setminus (\Omega)h|$, we shall construct elements $f'$ and $g'$, such that $f' \in A$, $g' \in B$, and $f'g' \in C$. To do this, we shall first prove a series of lemmas in Sections~\ref{fin section}--\ref{fwd section} which reduce the problem to the case where $f'$ and $g'$ each have exactly one infinite cycle, and where none of $f'$, $g'$, and $h' = f'g'$ has any finite cycles. Then, in Section~\ref{construction section} we shall construct the desired maps $f'$ and $g'$ in that case. More specifically, the lemmas of Section~\ref{fin section} will allow us to add any number of finite cycles to the $f'$, $g'$, and $h'$ constructed in Section~\ref{construction section}. Similarly, those of Section~\ref{open section} will allow us to add any number of open cycles to $f'$ and $g'$.  Likewise, the lemmas of Section~\ref{fwd section} will allow us to add any number of forward cycles to $f'$ and $g'$ (and hence also to $h'$). Finally, in Section~\ref{proof section} we shall combine all these pieces, for a complete proof of the theorem.

\section{Finite cycles} \label{fin section}

From now on, $\Omega$ will denote a countably infinite set. The next lemma says, informally, that given maps $f,g,h \in \Inj(\Omega)$ such that $fg = h$, we can, in certain situations, add finite cycles to $h$ without changing the $\sim$-equivalence classes of $f$ and $g$.

\begin{lemma} \label{fin h}
Let $f, g, h \in \Inj(\Omega)$ be any three maps, each having at least one infinite cycle, such that $fg = h$, and for each $n \in \Z_+$, let $K_n$ be a cardinal satisfying $\, 0 \leq K_n \leq \aleph_0$. Further, suppose that there are infinite cycles $\, \Gamma_f$, $\, \Gamma_g$, and $\, \Gamma_h$ of $f$, $g$, and $h$, respectively, such that $\, \Sigma =  \Gamma_f \cap  \Gamma_g \cap  \Gamma_h$ is infinite. Then there exist maps $f', g', h' \in \Inj(\Omega)$ such that $f'g' = h'$, $f' \sim f$, $g' \sim g$, $(h')\C_{\mathrm{open}} = (h)\C_{\mathrm{open}} $, $(h')\C_{\mathrm{fwd}} = (h)\C_{\mathrm{fwd}}$, and $(h')\C_n = (h)\C_n + K_n$ for all $n \in \Z_+.$

Furthermore, the maps $f'$, $g'$, and $h'$ can be chosen so that there are infinite cycles $\, \Gamma_{f'}$, $\, \Gamma_{g'}$, and $\, \Gamma_{h'}$ of $f'$, $g'$, and $h'$, respectively, such that $\, \Gamma_{f'} \cap  \Gamma_{g'} \cap  \Gamma_{h'}$ is infinite.
\end{lemma}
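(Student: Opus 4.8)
The plan is to build $f'$, $g'$, and $h'$ by surgery on a single shared infinite cycle, leaving everything else untouched. Since $\Sigma = \Gamma_f \cap \Gamma_g \cap \Gamma_h$ is infinite, I would first observe that we can find an infinite subset $\Sigma' \subseteq \Sigma$ on which $f$, $g$, and $h$ act ``in step'', i.e. such that $\Sigma'$ meets each of $\Gamma_f$, $\Gamma_g$, $\Gamma_h$ in an infinite set, and — crucially — such that there is still an infinite piece of each of $\Gamma_f$, $\Gamma_g$, $\Gamma_h$ lying outside $\Sigma'$ (this is possible since all three cycles are infinite and $\Sigma$ is infinite). The portion of each cycle outside $\Sigma'$ is what will serve as $\Gamma_{f'}$, $\Gamma_{g'}$, $\Gamma_{h'}$ for the ``Furthermore'' clause: we never disturb it, so those cycles remain infinite cycles with infinite common intersection. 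The new finite cycles will be manufactured entirely from material living on (a further reindexing of) $\Sigma'$.

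Next, the combinatorial heart: I would pick inside $\Sigma'$ infinitely many disjoint finite ``blocks'' $B^{(1)}, B^{(2)}, \dots$, where for each $n \in \Z_+$ we reserve $K_n$-many blocks of size $n$ (if $K_n = 0$ we reserve none; if $K_n = \aleph_0$ we reserve countably many; since $\Sigma'$ is countably infinite and $\sum_n K_n \cdot n \le \aleph_0$, there is enough room, and whatever is left over of $\Sigma'$ we leave attached to the original cycles). On each block $B$ of size $n$ I want to redefine $f'$ and $g'$ so that $f'g'$ restricted to $B$ is an $n$-cycle of $h'$, while the \emph{cycle types} of $f'$ and $g'$ on $B \cup (\text{the rest of the relevant infinite cycle})$ match those of $f$ and $g$ on $\Gamma_f$, $\Gamma_g$. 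The clean way to do this: use Lemma~\ref{prod-conj-equiv}/Proposition~\ref{decomposition}-style bookkeeping in the form ``an $n$-cycle is a product of an open cycle and an open cycle'', i.e. on a countable set one can write a fixed $n$-cycle as $\varphi\psi$ where $\varphi$ and $\psi$ are each single open (two-sided infinite) cycles. Concretely, splice block $B^{(k)}$ (of size $n$) \emph{into} the infinite cycles $\Gamma_f$ and $\Gamma_g$: arrange $f'$ to cycle through $B^{(k)}$ as part of one long open cycle, arrange $g'$ likewise, and check that the product sends $B^{(k)}$ to itself as an $n$-cycle and is the identity-in-type elsewhere. Doing this for all blocks simultaneously reassembles $\Gamma_f$ and $\Gamma_g$ (minus the reserved-and-leftover material) into open cycles of $f'$ and $g'$ of the same cardinality ($\aleph_0$) and same forward/open status as the originals, so $f' \sim f$ and $g' \sim g$; meanwhile $h' = f'g'$ agrees with $h$ off $\bigcup_k B^{(k)}$ and has exactly $K_n$ extra $n$-cycles on the blocks, so $(h')\C_n = (h)\C_n + K_n$ and $(h')\C_{\mathrm{open}}$, $(h')\C_{\mathrm{fwd}}$ are unchanged.

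Then I would verify the side conditions: that $f'$, $g'$, $h'$ are genuinely injective endomaps (the splicing keeps each a bijection between the cycle and itself, appropriately shifted — the one subtlety is making sure forward cycles of $f$, $g$ remain forward and keep their initial elements, which they do since we only ever touched open cycles $\Gamma_f$, $\Gamma_g$, $\Gamma_h$); that the untouched part of $\Gamma_f \cap \Gamma_g \cap \Gamma_h$ gives the promised $\Gamma_{f'}, \Gamma_{g'}, \Gamma_{h'}$; and that $f'g' = h'$ holds pointwise, which reduces to a check on each block and on the complement. I expect the main obstacle to be the explicit interleaving construction on the blocks: one has to write down the permutations on $B^{(k)}$ together with enough of the ambient open cycle that the product is \emph{exactly} the desired $n$-cycle and introduces no spurious cycles (e.g. no accidental finite cycles in $f'$ or $g'$, and no merging or splitting that changes $(h')\C_{\mathrm{open}}$). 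Once the ``one block'' picture is pinned down, extending to all blocks at once and reading off the cycle counts is routine bookkeeping using the $\C_n$, $\C_{\mathrm{open}}$, $\C_{\mathrm{fwd}}$ invariants of Definition~\ref{cycle number}.
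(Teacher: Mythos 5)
Your high-level strategy is the same as the paper's: perform a simultaneous local surgery at infinitely many well-separated spots of the common infinite set $\Sigma$, while reserving an infinite untouched part so that the new infinite cycles of $f'$, $g'$, $h'$ still have infinite common intersection. But the proposal has a genuine gap exactly where the lemma's content lies: the explicit interleaving on a block which makes $f'$ and $g'$ still have a single infinite cycle of the \emph{same type} through the surgery site while $f'g'$ acquires an $n$-cycle. You defer this as ``the main obstacle'', and the reduction you offer in its place (``an $n$-cycle is a product of two open cycles'') is both unproved in the local, spliced form you need (you must control all three maps at once: $f'$, $g'$, \emph{and} the behaviour of $f'g'$ on the ambient cycle, so that no spurious cycles appear and the remainder of $\Gamma_h$ reassembles into one infinite cycle of the same type) and insufficient in generality: the hypotheses do not say that $\Gamma_f$, $\Gamma_g$, $\Gamma_h$ are open cycles. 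Your parenthetical ``which they do since we only ever touched open cycles'' is unfounded --- these cycles may be forward, and in the application inside the proof of Theorem~\ref{comp} they \emph{are} forward (the maps of Lemma~\ref{open h} each have a single forward cycle and nothing else), so an argument that works only for open cycles does not suffice.

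Two smaller inaccuracies point the same way. First, the portion of a cycle lying outside $\Sigma'$ is not itself a cycle, so it cannot literally ``serve as $\Gamma_{f'}$''; what one actually shows (as the paper does) is that an untouched infinite subset of $\Sigma$ is contained in the new infinite cycles of $f'$, $g'$, $h'$. Second, since you carve the new $n$-cycles out of existing points of $\Gamma_h$, it is impossible for $h'$ to agree with $h$ off $\bigcup_k B^{(k)}$: the $h$-predecessor of each block must be rerouted past the block, so $h'$ necessarily differs from $h$ at additional points near each site --- harmless, but it means the bookkeeping you call routine must be redone there. For comparison, the paper sidesteps both issues by enlarging the underlying set: it replaces one non-initial point $\alpha \in \Sigma$ by $2n$ new points $\theta_1,\dots,\theta_{2n}$, writes down $\bar f$ and $\bar g$ explicitly so that each threads the new points into its old infinite cycle (hence the cycle keeps its cardinality and its forward/open type), while $\bar h = \bar f\bar g$ keeps its infinite cycle and gains exactly the $n$-cycle $\{\theta_{n+1},\dots,\theta_{2n}\}$; it then transfers everything back to $\Omega$ by an arbitrary bijection via Lemma~\ref{pass}, and handles general $K_n$ by running the construction simultaneously at infinitely many separated points of $\Sigma$, keeping an infinite $\Delta \subseteq \Sigma$ untouched for the final claim. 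Until you exhibit an interleaving with these properties, valid for forward as well as open $\Gamma$'s, the proof is not complete.
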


\begin{proof}
We begin by proving the lemma in the case where there exists $n \in \Z_+$ such that $K_n = 1$, and for all $m \in \Z_+ \setminus \{n\}$ we have $K_m = 0$.

Let $\alpha \in \Sigma$ be any element that is not initial in any of $\Gamma_f$, $\Gamma_g$, and $\Gamma_h$ (if any of them is a forward cycle). Write $\beta = (\alpha)f^{-1}$, $\gamma = (\alpha)f$, $\delta = (\alpha)g^{-1}$, $\epsilon = (\alpha)g$, $\zeta  = (\alpha)h^{-1}$, and $\eta = (\alpha)h$. (Therefore, in particular, $\beta, \gamma \in \Gamma_f$, $\delta, \epsilon \in \Gamma_g$, and $\zeta, \eta \in \Gamma_h$.) Also, let $\Omega' = (\Omega \setminus \{\alpha\}) \dot{\cup} \{\theta_1, \theta_2, \dots, \theta_{2n}\}$. We shall next define injective endomaps $\bar{f}$, $\bar{g}$, and $\bar{h}$ on $\Omega'$.

Define $\bar{f}$ to act by
$$\left\{ \begin{array}{ll}
\beta \mapsto \theta_{n+1} & \\
\theta_i \mapsto \theta_{i+n+1} & \textrm{for } 1 \leq i \leq n-1\\
\theta_n \mapsto \gamma & \\
\theta_i \mapsto \theta_{i-n} & \textrm{for } n+1 \leq i \leq 2n\\
\lambda \mapsto (\lambda)f & \textrm{otherwise}
\end{array}\right.,$$ and define $\bar{g}$ to act by 
$$\left\{ \begin{array}{ll}
\delta \mapsto \theta_1 & \\
\theta_1 \mapsto \theta_{2n} & \\
\theta_i \mapsto \theta_{i+n-1} & \textrm{for } 2 \leq i \leq n\\
\theta_{n+1} \mapsto \epsilon & \\
\theta_i \mapsto \theta_{i-n} & \textrm{for } n+2 \leq i \leq 2n\\
\lambda \mapsto (\lambda)g & \textrm{otherwise}
\end{array}\right..$$

The action of $\bar{f}$ on $\{\theta_1, \theta_2, \dots, \theta_{2n}\}$ can be visualized as follows: $$\cdots \rightarrow \beta \rightarrow \fbox{$\theta_{n+1}$} \rightarrow \theta_1 \rightarrow \fbox{$\theta_{n+2}$} \rightarrow \theta_2 \rightarrow \cdots \rightarrow \fbox{$\theta_{2n-1}$} \rightarrow \theta_{n-1} \rightarrow \fbox{$\theta_{2n}$} \rightarrow \theta_n \rightarrow \gamma \rightarrow \cdots $$ (the boxes have been added to emphasize the pattern). (We should note that in the case where $n = 1$, the condition $1 \leq i \leq n-1$ in the definition of $\bar{f}$ is vacuous, and hence the above diagram, while still technically accurate as is, simplifies to: $$\cdots \rightarrow \beta \rightarrow \fbox{$\theta_{2}$} \rightarrow \theta_1\rightarrow \gamma \rightarrow \cdots .$$ Similar reductions occur with $\bar{g}$ and also with $\bar{h}$ below.) Thus, $f'$ is indeed a well-defined injective map on $\Omega'$. Also, $(\Gamma_f \setminus \{\alpha\}) \cup \{\theta_1, \theta_2, \dots, \theta_{2n}\}$ is an infinite cycle under $\bar{f}$ (of the same type as $\Gamma_f$ under $f$), and otherwise $\bar{f}$ has the same cycles as $f$. It follows that for any bijection $a : \Omega \rightarrow \Omega'$, we have $f \sim a\bar{f}a^{-1}$, by Lemma~\ref{pass}. Similarly, the action of $\bar{g}$ on $\{\theta_1, \theta_2, \dots, \theta_{2n}\}$ can be visualized as follows: $$\cdots \rightarrow \delta \rightarrow \theta_1 \rightarrow \theta_{2n} \rightarrow \fbox{$\theta_n$} \rightarrow \theta_{2n-1} \rightarrow \fbox{$\theta_{n-1}$} \rightarrow \cdots$$ $$\rightarrow \fbox{$\theta_3$} \rightarrow \theta_{n+2} \rightarrow \fbox{$\theta_2$} \rightarrow \theta_{n+1} \rightarrow \epsilon \rightarrow \cdots .$$ Thus, $g'$ is a well-defined injective map on $\Omega'$. Also, $(\Gamma_g \setminus \{\alpha\}) \cup \{\theta_1, \theta_2, \dots, \theta_{2n}\}$ is an infinite cycle under $\bar{g}$ (of the same type as $\Gamma_g$ under $g$), and otherwise $\bar{g}$ has the same cycles as $g$.  It follows that for any bijection $a : \Omega \rightarrow \Omega'$, we have $g \sim a\bar{g}a^{-1}$, by Lemma~\ref{pass}.

Now, set $\bar{h} = \bar{f}\bar{g}$. Noting that $\zeta = (\delta)f^{-1}$, $\eta = (\gamma)g$, and $(\beta)\bar{h} = (\beta)\bar{f}\bar{g} = (\theta_{n+1})\bar{g} = \epsilon = (\beta)h$, we see that $\bar{h}$ acts by 
$$\left\{ \begin{array}{ll}
\zeta \mapsto \theta_1 & \\
\theta_i \mapsto \theta_{i+1} & \textrm{for } 1 \leq i \leq n-1\\
\theta_n \mapsto \eta & \\
\theta_{n+1} \mapsto \theta_{2n} & \\
\theta_i \mapsto \theta_{i-1} & \textrm{for } n+2 \leq i \leq 2n\\
\lambda \mapsto (\lambda)h & \textrm{otherwise}
\end{array}\right..$$ On $\{\theta_1, \theta_2, \dots, \theta_n\}$ the action of $\bar{h}$ can be visualized as follows: $$\cdots \rightarrow \zeta \rightarrow\theta_1 \rightarrow \theta_2 \rightarrow \cdots \rightarrow \theta_{n-1} \rightarrow \theta_n \rightarrow \eta \rightarrow \cdots .$$ Thus, $(\Gamma_h \setminus \{\alpha\}) \cup \{\theta_1, \theta_2, \dots, \theta_n\}$ is an infinite cycle under $\bar{h}$, $\{\theta_{n+1}, \theta_{n+2}, \dots, \theta_{2n}\}$ is a cycle of cardinality $n$ under $\bar{h}$, and otherwise $\bar{h}$ has the same cycles as $h$.  It follows that for any bijection $a : \Omega \rightarrow \Omega'$, we have $(a\bar{h}a^{-1})\C_{\mathrm{open}} = (h)\C_{\mathrm{open}} $, $(a\bar{h}a^{-1})\C_{\mathrm{fwd}} = (h)\C_{\mathrm{fwd}}$, $(a\bar{h}a^{-1})\C_n = (h)\C_n + 1$, and $(a\bar{h}a^{-1})\C_m = (h)\C_m $ for all $m \in \Z_+ \setminus \{n\}.$ Also $a\bar{f}a^{-1}a\bar{g}a^{-1} = a\bar{h}a^{-1}$. Thus, setting $f' = a\bar{f}a^{-1}$, $g' = a\bar{g}a^{-1}$, and $h' = a\bar{h}a^{-1}$, we have proved the lemma in the desired special case.

For the general case, we may assume that not all of the $K_n$ are zero, since otherwise there is nothing to prove. Write $\Sigma = (\bigcup_{n=1}^\infty \Sigma_n) \cup \Delta$, where the union is disjoint, for each $n \in \Z_+$, $|\Sigma_n| = K_n$, and $|\Delta| = \aleph_0$. Moreover, since $\Sigma$ is infinite, we can choose $\bigcup_{n=1}^\infty \Sigma_n$ in such a way that each $\alpha \in \bigcup_{n=1}^\infty \Sigma_n$ is not initial in $\Gamma_f$, $\Gamma_g$, and $\Gamma_h$ (if any of them is a forward cycle), and so that for all $\alpha, \alpha' \in \bigcup_{n=1}^\infty \Sigma_n$ we have $\alpha' \notin \{(\alpha)f, (\alpha)g, (\alpha)h, (\alpha)f^{-1}, (\alpha)g^{-1}, (\alpha)h^{-1}\}$. For each $n \in \Z_+$, write $\Sigma_n = \{\alpha^{n,1}, \alpha^{n,2}, \alpha^{n,3}, \dots \}$. Then, we can perform the above construction for each $\alpha^{n,j}$ simultaneously (by simply adding the superscript ``$n,j$" to each $\beta$, $\gamma$, $\delta$, $\epsilon$, $\zeta$, $\eta$, and $\theta_i$ above) and define the maps $\bar{f}$, $\bar{g}$, $\bar{h}$ on $$\Omega' = (\Omega \setminus \bigcup_{n=1}^\infty \Sigma_n) \cup \bigcup_{n,j} \Delta_{n,j},$$ where $\Delta_{n,j} = \{\theta_1^{n,j}, \theta_2^{n,j}, \dots, \theta_{2n}^{n,j}\}$ is the $2n$-element set corresponding to $\alpha^{n,j}$. The resulting $\bar{f}$ will have $$\Gamma_{\bar{f}} = (\Gamma_f \setminus \bigcup_{n=1}^\infty \Sigma_n) \cup \bigcup_{n,j} \Delta_{n,j}$$ as an infinite cycle, and otherwise $\bar{f}$ will have the same cycles as $f$. Also, $\bar{g}$ will have $$\Gamma_{\bar{g}} = (\Gamma_g \setminus \bigcup_{n=1}^\infty \Sigma_n) \cup \bigcup_{n,j} \Delta_{n,j}$$ as an infinite cycle, and otherwise $\bar{g}$ will have the same cycles as $g$. Similarly, $\bar{h} = \bar{f}\bar{g}$ will have $$\Gamma_{\bar{h}} = (\Gamma_h \setminus \bigcup_{n=1}^\infty \Sigma_n) \cup \bigcup_{n,j} \{\theta_1^{n,j}, \theta_2^{n,j}, \dots, \theta_{n}^{n,j}\}$$ as an infinite cycle and each $\{\theta_{n+1}^{n,j}, \theta_{n+2}^{n,j}, \dots, \theta_{2n}^{n,j}\}$ as a cycle of cardinality $n$, and otherwise $\bar{h}$ will have the same cycles as $h$. We also note that $\Delta \subseteq \Gamma_{\bar{f}} \cap \Gamma_{\bar{g}} \cap \Gamma_{\bar{h}}$, and hence the latter is infinite. Thus, for any bijection $a : \Omega \rightarrow \Omega'$, the maps $f' = a\bar{f}a^{-1}$, $g' = a\bar{g}a^{-1}$, and $h' = a\bar{h}a^{-1}$ will have the desired properties.
\end{proof}

We could have actually proved the above lemma with slightly weaker hypotheses. Namely, instead of assuming that there are cycles of $f$, $g$, and $h$ whose intersection is infinite, we could have assumed that there are infinitely many elements $\alpha \in \Omega$, each of which is contained in some infinite cycle under each of $f$, $g$, and $h$, and that these elements $\alpha$ are not initial in those cycles. Proving this more general version of the lemma would require essentially no additional work, however the formulation we have given suffices for our present purposes. A similar remark can be made about each subsequent lemma. Also, the proofs of all the lemmas that follow will be similar to the one above -- the only significant differences will be in how $\bar{f}$ and $\bar{g}$ are defined. Thus, fewer details will be given in those proofs. 

The next lemma is an analogue of Lemma~\ref{fin h} that allows us to add finite cycles to $f$, rather than $h$, in the equation $fg = h$.

\begin{lemma} \label{fin f}
Let $f, g, h \in \Inj(\Omega)$ be any three maps, each having at least one infinite cycle, such that $fg = h$, and for each $n \in \Z_+$, let $K_n$ be a cardinal satisfying $\, 0 \leq K_n \leq \aleph_0$. Further, suppose that there are infinite cycles $\, \Gamma_f$, $\, \Gamma_g$, and $\, \Gamma_h$ of $f$, $g$, and $h$, respectively, such that $\, \Sigma =  \Gamma_f \cap  \Gamma_g \cap  \Gamma_h$ is infinite. Then there exist maps  $f', g', h' \in \Inj(\Omega)$ such that $f'g' = h'$, $g' \sim g$, $h' \sim h$, $(f')\C_{\mathrm{open}} = (f)\C_{\mathrm{open}} $, $(f')\C_{\mathrm{fwd}} = (f)\C_{\mathrm{fwd}}$, and $(f')\C_n = (f)\C_n + K_n$ for all $n \in \Z_+.$

Furthermore, the maps $f'$, $g'$, and $h'$ can be chosen so that there are infinite cycles $\, \Gamma_{f'}$, $\, \Gamma_{g'}$, and $\, \Gamma_{h'}$ of $f'$, $g'$, and $h'$, respectively, such that $\, \Gamma_{f'} \cap  \Gamma_{g'} \cap  \Gamma_{h'}$ is infinite.
\end{lemma}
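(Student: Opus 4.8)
The plan is to follow the proof of Lemma~\ref{fin h} essentially line for line, changing only the explicit definitions of $\bar f$, $\bar g$, $\bar h$ so that the extra finite cycles are absorbed by $f$ rather than by $h$ (as the remark following Lemma~\ref{fin h} anticipates). First I would reduce, exactly as there, to the special case in which $K_n = 1$ for a single $n \in \Z_+$ and $K_m = 0$ for $m \neq n$: the general case follows by carrying out the special-case construction simultaneously at countably many points of $\Sigma$, chosen (using $|\Sigma| = \aleph_0$) to be non-initial in $\Gamma_f, \Gamma_g, \Gamma_h$ and pairwise separated under $f^{\pm 1}, g^{\pm 1}, h^{\pm 1}$, and keeping an infinite leftover $\Delta \subseteq \Sigma$ inside $\Gamma_{f'} \cap \Gamma_{g'} \cap \Gamma_{h'}$ for the ``furthermore'' clause; this bookkeeping is identical to Lemma~\ref{fin h}.

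For the special case, I would fix $\alpha \in \Sigma$ not initial in any of $\Gamma_f, \Gamma_g, \Gamma_h$, set $\beta = (\alpha)f^{-1}$, $\gamma = (\alpha)f$, $\delta = (\alpha)g^{-1}$, $\epsilon = (\alpha)g$, $\zeta = (\alpha)h^{-1}$, $\eta = (\alpha)h$ (so $\zeta = (\delta)f^{-1}$, $\eta = (\gamma)g$, and $(\beta)h = \epsilon$, since $fg = h$), and pass to $\Omega' = (\Omega \setminus \{\alpha\}) \, \dot{\cup} \, \{\theta_1, \dots, \theta_{2n}\}$. Interchanging the roles of $f$ and $h$ relative to Lemma~\ref{fin h}, I would let $\bar f$ act by $\beta \mapsto \theta_1$, $\theta_i \mapsto \theta_{i+1}$ for $1 \leq i \leq n-1$, $\theta_n \mapsto \gamma$, $\theta_i \mapsto \theta_{i+1}$ for $n+1 \leq i \leq 2n-1$, $\theta_{2n} \mapsto \theta_{n+1}$, and $\lambda \mapsto (\lambda)f$ otherwise; then $(\Gamma_f \setminus \{\alpha\}) \cup \{\theta_1, \dots, \theta_n\}$ is an infinite cycle of $\bar f$ of the same type as $\Gamma_f$, the set $\{\theta_{n+1}, \dots, \theta_{2n}\}$ is a new cycle of $\bar f$ of cardinality $n$, and $\bar f$ otherwise has the cycles of $f$. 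I would then define $\bar g$ so that it threads the new points into $\Gamma_g$ between $\delta$ and $\epsilon$ in some order $\theta_{p(1)} \to \cdots \to \theta_{p(2n)}$ (to be chosen), acting as $g$ elsewhere; any such $\bar g$ has $(\Gamma_g \setminus \{\alpha\}) \cup \{\theta_1, \dots, \theta_{2n}\}$ as an infinite cycle of the same type as $\Gamma_g$ and otherwise the cycles of $g$, so $\bar g \sim g$. A case check (using $(\beta)h = \epsilon$) shows $\bar h := \bar f \bar g$ agrees with $h$ off $\{\zeta\} \cup \{\theta_1, \dots, \theta_{2n}\}$, while $\zeta \mapsto \theta_{p(1)}$, $\theta_n \mapsto \eta$, and the $\theta_i$ are permuted among themselves by $\bar f \bar g$; the goal is to pick $p$ so that these $2n$ points form a single chain $\zeta \to \theta_{p(1)} \to \cdots \to \theta_n \to \eta$, i.e.\ so that $(\Gamma_h \setminus \{\alpha\}) \cup \{\theta_1, \dots, \theta_{2n}\}$ is an infinite cycle of $\bar h$ of the same type as $\Gamma_h$ with no spurious finite cycles, giving $\bar h \sim h$. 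Conjugating through any bijection $a \colon \Omega \to \Omega'$ and invoking Lemma~\ref{pass} and Lemma~\ref{fwd cycles} then produces $f' = a \bar f a^{-1}$, $g' = a \bar g a^{-1}$, $h' = a \bar h a^{-1}$ with the required properties.

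The only step with real content — everything else paralleling Lemma~\ref{fin h} — is exhibiting the order $p$ that makes $\bar h = \bar f \bar g$ carry no extra finite cycles on $\{\theta_1, \dots, \theta_{2n}\}$: equivalently, writing down $p$ so that the length-$n$ path of $\bar f$ through $\theta_1, \dots, \theta_n$ and the $n$-cycle of $\bar f$ on $\theta_{n+1}, \dots, \theta_{2n}$, composed with the cyclic shift induced by $p$, interlock into one $2n$-step path from $\zeta$ to $\eta$. This forces $p(2n) = 1$ (so that $\beta \mapsto \epsilon$ under $\bar h$), and is otherwise a finite combinatorial check for each $n$; I expect it to be uniformly resolvable by an explicit interleaving formula for $p$ of the same flavour as the one defining $\bar f$ in Lemma~\ref{fin h}, with the write-up displaying the resulting rules and diagrams for $\bar g$ and $\bar h$.
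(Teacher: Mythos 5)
Your outline coincides with the paper's proof of Lemma~\ref{fin f}: the same reduction to the case where a single $K_n$ equals $1$, the same $\bar{f}$ (which carries the new $n$-cycle on $\{\theta_{n+1},\dots,\theta_{2n}\}$ and sends $\beta \to \theta_1 \to \cdots \to \theta_n \to \gamma$), and the same plan of threading $\theta_1,\dots,\theta_{2n}$ into $\Gamma_g$ between $\delta$ and $\epsilon$, then conjugating by a bijection $\Omega \to \Omega'$ via Lemma~\ref{pass}. However, the one step you yourself single out as carrying the real content --- the explicit order $p$ in which $\bar{g}$ threads the new points --- is precisely what you never produce, and it cannot be waved through as a routine expectation: the condition $p(2n)=1$ is necessary but far from sufficient. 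For example, take $n=2$ (so $\bar{f}$ has the chain $\beta \to \theta_1 \to \theta_2 \to \gamma$ and the $2$-cycle $\theta_3 \leftrightarrow \theta_4$) and thread in the seemingly natural order $\delta \to \theta_2 \to \theta_3 \to \theta_4 \to \theta_1 \to \epsilon$; then $\bar{h}=\bar{f}\bar{g}$ sends $\theta_1 \mapsto \theta_3 \mapsto \theta_1$ and $\theta_4 \mapsto \theta_4$, so $\bar{h}$ acquires a spurious $2$-cycle and a fixed point and $h' \sim h$ fails. So the existence, for every $n$, of a threading that interlocks the path and the $n$-cycle of $\bar{f}$ into a single $2n$-step chain of $\bar{h}$ is a genuine claim requiring either an explicit formula or an existence argument; as written, your proof has a hole exactly where the lemma's content lies.

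The gap is fillable, and the paper fills it with the interleaving you conjectured should exist: $\bar{g}$ is defined by $\delta \mapsto \theta_{n+1}$, $\theta_i \mapsto \theta_{2n+2-i}$ for $2 \leq i \leq n$, $\theta_i \mapsto \theta_{2n+1-i}$ for $n+1 \leq i \leq 2n$, and $\theta_1 \mapsto \epsilon$, i.e.\ the chain $\delta \to \theta_{n+1} \to \theta_n \to \theta_{n+2} \to \theta_{n-1} \to \cdots \to \theta_{2n-1} \to \theta_2 \to \theta_{2n} \to \theta_1 \to \epsilon$. A direct computation then shows that $\bar{h}=\bar{f}\bar{g}$ traverses $\zeta \to \theta_{n+1} \to \theta_{n-1} \to \theta_{n+2} \to \theta_{n-2} \to \cdots \to \theta_{2n-1} \to \theta_1 \to \theta_{2n} \to \theta_n \to \eta$, a single chain through all $2n$ new points, so that $(\Gamma_h \setminus \{\alpha\}) \cup \{\theta_1,\dots,\theta_{2n}\}$ is one infinite cycle of $\bar{h}$ (of the same type as $\Gamma_h$) and $\bar{h}$ otherwise has the cycles of $h$, giving $h' \sim h$. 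With this formula and its verification inserted, the remainder of your argument --- the choice of $\alpha$, the claim $\bar{g} \sim g$ for any threading, the simultaneous construction at countably many points for general $K_n$, and the preservation of an infinite intersection of infinite cycles for the final clause --- matches the paper's proof.
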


\begin{proof}
As with Lemma~\ref{fin h}, we begin by proving the lemma in the case where there exists $n \in \Z_+$ such that $K_n = 1$, and for all $m \in \Z_+ \setminus \{n\}$ we have $K_m = 0$. 

Again, let $\alpha \in \Sigma$ be any element that is not initial in any of $\Gamma_f$, $\Gamma_g$, and $\Gamma_h$ (if any of them is a forward cycle). Write $\beta = (\alpha)f^{-1}$, $\gamma = (\alpha)f$, $\delta = (\alpha)g^{-1}$, $\epsilon = (\alpha)g$, $\zeta  = (\alpha)h^{-1}$, and $\eta = (\alpha)h$. Also, let $\Omega' = (\Omega \setminus \{\alpha\}) \dot{\cup} \{\theta_1, \theta_2, \dots, \theta_{2n}\}$. We shall next define injective endomaps $\bar{f}$, $\bar{g}$, and $\bar{h}$ on $\Omega'$.

Define $\bar{f}$ to act by $$\left\{ \begin{array}{ll}
\beta \mapsto \theta_{1} & \\
\theta_i \mapsto \theta_{i+1} & \textrm{for } 1 \leq i \leq n-1, n+1 \leq i \leq 2n-1 \\
\theta_n \mapsto \gamma & \\
\theta_{2n} \mapsto \theta_{n+1} & \\
\lambda \mapsto (\lambda)f & \textrm{otherwise}
\end{array}\right.,$$ and define $\bar{g}$ to act by 
$$\left\{ \begin{array}{ll}
\delta \mapsto \theta_{n+1} & \\
\theta_1 \mapsto \epsilon & \\
\theta_i \mapsto \theta_{2n+2-i} & \textrm{for } 2 \leq i \leq n\\
\theta_i \mapsto \theta_{2n+1-i} & \textrm{for } n+1 \leq i \leq 2n\\
\lambda \mapsto (\lambda)g & \textrm{otherwise}
\end{array}\right..$$

The action of $\bar{f}$ on $\{\theta_1, \theta_2, \dots, \theta_{n}\}$ can be visualized as follows: $$\cdots \rightarrow \beta \rightarrow \theta_1 \rightarrow \theta_2 \rightarrow \cdots \rightarrow \theta_{n-1} \rightarrow \theta_n \rightarrow \gamma \rightarrow \cdots .$$ Thus, $(\Gamma_f \setminus \{\alpha\}) \cup \{\theta_1, \theta_2, \dots, \theta_{n}\}$ is an infinite cycle under $\bar{f}$, $\{\theta_{n+1}, \theta_{n+2}, \dots, \theta_{2n}\}$ is a cycle of cardinality $n$ under $\bar{f}$, and otherwise $\bar{f}$ has the same cycles as $f$.  It follows from Lemma~\ref{pass} that for any bijection $a : \Omega \rightarrow \Omega'$, we have $(a\bar{f}a^{-1})\C_{\mathrm{open}} = (f)\C_{\mathrm{open}} $, $(a\bar{f}a^{-1})\C_{\mathrm{fwd}} = (f)\C_{\mathrm{fwd}}$, $(a\bar{f}a^{-1})\C_n = (f)\C_n + 1$, and $(a\bar{f}a^{-1})\C_m = (f)\C_m $ for all $m \in \Z_+ \setminus \{n\}.$ Similarly, the action of $\bar{g}$ on $\{\theta_1, \theta_2, \dots, \theta_{2n}\}$ can be visualized as follows: $$\cdots \rightarrow \delta \rightarrow \fbox{$\theta_{n+1}$} \rightarrow \theta_{n} \rightarrow \fbox{$\theta_{n+2}$} \rightarrow \theta_{n-1} \rightarrow \cdots \rightarrow \fbox{$\theta_{2n-1}$} \rightarrow \theta_{2} \rightarrow \fbox{$\theta_{2n}$} \rightarrow \theta_{1} \rightarrow \epsilon \rightarrow \cdots .$$ Thus, $(\Gamma_g \setminus \{\alpha\}) \cup \{\theta_1, \theta_2, \dots, \theta_{2n}\}$ is an infinite cycle under $\bar{g}$, and otherwise $\bar{g}$ has the same cycles as $g$.  It follows that for any bijection $a : \Omega \rightarrow \Omega'$, we have $g \sim a\bar{g}a^{-1}$.

Now, set $\bar{h} = \bar{f}\bar{g}$. Noting that $\zeta = (\delta)f^{-1}$, $\eta = (\gamma)g$, and $(\beta)\bar{h} = (\beta)\bar{f}\bar{g} = (\theta_{1})\bar{g} = \epsilon = (\beta)h$, we see that $\bar{h}$ acts by 
$$\left\{ \begin{array}{ll}
\zeta \mapsto \theta_{n+1} & \\
\theta_i \mapsto \theta_{2n+1-i} & \textrm{for } 1 \leq i \leq n-1\\
\theta_n \mapsto \eta & \\
\theta_i \mapsto \theta_{2n-i} & \textrm{for } n+1 \leq i \leq 2n-1\\
\theta_{2n} \mapsto \theta_{n} & \\
\lambda \mapsto (\lambda)h & \textrm{otherwise}
\end{array}\right..$$ On $\{\theta_1, \theta_2, \dots, \theta_{2n}\}$ the action of $\bar{h}$ can be visualized as follows:$$\cdots \rightarrow \zeta \rightarrow \fbox{$\theta_{n+1}$} \rightarrow \theta_{n-1} \rightarrow \fbox{$\theta_{n+2}$} \rightarrow \theta_{n-2} \rightarrow \cdots \rightarrow \fbox{$\theta_{2n-1}$} \rightarrow \theta_{1} \rightarrow \fbox{$\theta_{2n}$} \rightarrow \theta_{n} \rightarrow \eta \rightarrow \cdots .$$ Thus, $(\Gamma_h \setminus \{\alpha\}) \cup \{\theta_1, \theta_2, \dots, \theta_{2n}\}$ is an infinite cycle under $\bar{h}$, and otherwise $\bar{h}$ has the same cycles as $h$.  It follows that for any bijection $a : \Omega \rightarrow \Omega'$, we have $h \sim a\bar{h}a^{-1}$, by Lemma~\ref{pass}. Also, $a\bar{f}a^{-1}a\bar{g}a^{-1} = a\bar{h}a^{-1}$. Thus, setting $f' = a\bar{f}a^{-1}$, $g' = a\bar{g}a^{-1}$, and $h' = a\bar{h}a^{-1}$, we have proved the lemma in the desired special case.

The general case and the final claim can be obtained by the same argument as in the last paragraph of the proof of Lemma~\ref{fin h}, so we omit the details here, to minimize repetition.
\end{proof}

The next lemma is an analogue of Lemmas~\ref{fin h} and~\ref{fin f} that allows us to add finite cycles to $g$ in the equation $fg = h$.

\begin{lemma} \label{fin g}
Let $f, g, h \in \Inj(\Omega)$ be any three maps, each having at least one infinite cycle, such that $fg = h$, and for each $n \in \Z_+$, let $K_n$ be a cardinal satisfying $\, 0 \leq K_n \leq \aleph_0$. Further, suppose that there are infinite cycles $\, \Gamma_f$, $\, \Gamma_g$, and $\, \Gamma_h$ of $f$, $g$, and $h$, respectively, such that $\, \Sigma =  \Gamma_f \cap  \Gamma_g \cap  \Gamma_h$ is infinite. Then there exist maps  $f', g', h' \in \Inj(\Omega)$ such that $f'g' = h'$, $f' \sim f$, $h' \sim h$, $(g')\C_{\mathrm{open}} = (g)\C_{\mathrm{open}} $, $(g')\C_{\mathrm{fwd}} = (g)\C_{\mathrm{fwd}}$, and $(g')\C_n = (g)\C_n + K_n$ for all $n \in \Z_+.$

Furthermore, the maps $f'$, $g'$, and $h'$ can be chosen so that there are infinite cycles $\, \Gamma_{f'}$, $\, \Gamma_{g'}$, and $\, \Gamma_{h'}$ of $f'$, $g'$, and $h'$, respectively, such that $\, \Gamma_{f'} \cap  \Gamma_{g'} \cap  \Gamma_{h'}$ is infinite.
\end{lemma}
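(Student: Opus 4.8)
The plan is to argue exactly as in Lemmas~\ref{fin h} and~\ref{fin f}, modifying only the wiring of the auxiliary maps near the point where we cut. I would first reduce to the special case $K_n = 1$ for a single $n \in \Z_+$, with $K_m = 0$ for all $m \ne n$; the general case then follows verbatim as in the last paragraph of the proof of Lemma~\ref{fin h}, by writing $\Sigma = (\bigcup_n \Sigma_n) \cup \Delta$ with the union disjoint, $|\Sigma_n| = K_n$, and $\Delta$ infinite, running the special-case construction simultaneously at each point of $\bigcup_n \Sigma_n$ (chosen, as there, to be non-initial in $\Gamma_f, \Gamma_g, \Gamma_h$ and to have pairwise disjoint $f^{\pm 1}$-, $g^{\pm 1}$-, and $h^{\pm 1}$-images), and noting that the points of $\Delta$ are untouched, so they lie in $\Gamma_{f'} \cap \Gamma_{g'} \cap \Gamma_{h'}$; this also gives the final claim.

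For the special case, choose $\alpha \in \Sigma$ non-initial in each of $\Gamma_f, \Gamma_g, \Gamma_h$, put $\beta = (\alpha)f^{-1}$, $\gamma = (\alpha)f$, $\delta = (\alpha)g^{-1}$, $\epsilon = (\alpha)g$, $\zeta = (\alpha)h^{-1}$, $\eta = (\alpha)h$ (so $\zeta = (\delta)f^{-1}$, $\eta = (\gamma)g$, $(\beta)h = \epsilon$), and set $\Omega' = (\Omega \setminus \{\alpha\}) \cup \{\theta_1, \dots, \theta_{2n}\}$ with the union disjoint. I would let $\bar g \in \Inj(\Omega')$ replace the segment $\delta \to \alpha \to \epsilon$ of $g$ by $\delta \to \theta_1 \to \theta_2 \to \cdots \to \theta_n \to \epsilon$, adjoin the cycle $\theta_{n+1} \to \theta_{n+2} \to \cdots \to \theta_{2n} \to \theta_{n+1}$, and agree with $g$ otherwise; thus $(\bar g)\C_n = (g)\C_n + 1$, $(\bar g)\C_m = (g)\C_m$ for $m \ne n$, $(\bar g)\C_{\mathrm{open}} = (g)\C_{\mathrm{open}}$, and $(\bar g)\C_{\mathrm{fwd}} = (g)\C_{\mathrm{fwd}}$. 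And I would let $\bar f \in \Inj(\Omega')$ replace $\beta \to \alpha \to \gamma$ by the chain beginning $\beta \to \theta_n$ and then alternating $\theta_{n+1}, \theta_{n+2}, \dots, \theta_{2n}$ (indices increasing) with $\theta_{n-1}, \theta_{n-2}, \dots, \theta_1$ (indices decreasing), i.e.,
$$\beta \to \theta_n \to \theta_{n+1} \to \theta_{n-1} \to \theta_{n+2} \to \theta_{n-2} \to \cdots \to \theta_{2n-1} \to \theta_1 \to \theta_{2n} \to \gamma,$$
and agree with $f$ otherwise; then the cycle of $\bar f$ through $\beta$ is $(\Gamma_f \setminus \{\alpha\}) \cup \{\theta_1, \dots, \theta_{2n}\}$, of the same type as $\Gamma_f$, and $\bar f$ has the same cycles as $f$ otherwise, so $f \sim a\bar f a^{-1}$ for any bijection $a : \Omega \to \Omega'$, by Lemma~\ref{pass}.

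It remains to check that $\bar h := \bar f\bar g$ satisfies $\bar h \sim h$; granting this, $f' = a\bar f a^{-1}$, $g' = a\bar g a^{-1}$, $h' = a\bar h a^{-1}$ are the desired maps, by Lemma~\ref{pass} and the identity $f'g' = h'$. Evaluating $\bar f\bar g$ shows that it agrees with $h$ off the modified points, that $(\zeta)\bar h = \theta_1$ and $(\beta)\bar h = \epsilon = (\beta)h$, and that on $\{\theta_1, \dots, \theta_{2n}\}$ the map $\bar h$ coincides with the permutation $\sigma\tau$ — where $\sigma$ is the $2n$-cycle obtained by cyclically closing the $\bar f$-chain and $\tau = (\theta_1\,\theta_2 \cdots \theta_n)(\theta_{n+1} \cdots \theta_{2n})$ — except that $\theta_{2n}$ is redirected from $\theta_1$ to $\eta$. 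Hence $\bar h$ restricts to a single chain on $\{\theta_1, \dots, \theta_{2n}\}$ — making the cycle of $\bar h$ through $\zeta$ and $\eta$ equal to $\Gamma_h$ with $\alpha$ deleted and these $2n$ points inserted — precisely when $\sigma\tau$ is a single $2n$-cycle; and it is, being
$$\theta_1 \to \theta_{n+1} \to \theta_n \to \theta_{n+2} \to \theta_{n-1} \to \cdots \to \theta_{2n-1} \to \theta_2 \to \theta_{2n} \to \theta_1.$$
Since $\alpha$ is non-initial, this chain-insertion preserves the cycle's type, and $\bar h$ has the same remaining cycles as $h$, so $\bar h \sim h$. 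I expect the only genuine difficulty to be exactly this: choosing the $\bar f$-chain — equivalently a $2n$-cycle $\sigma$ — so that $\sigma\tau$ is again a single cycle, preventing spurious finite cycles in $\bar f\bar g$; the staggered interleaving above achieves it, and the rest is bookkeeping identical to the two preceding lemmas. (As usual, the small cases $n = 1, 2$ degenerate, with the displayed chains simplifying accordingly.)
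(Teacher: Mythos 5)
Your proposal is correct and is essentially the paper's own proof: your $\bar g$ (inserting $\delta \to \theta_1 \to \cdots \to \theta_n \to \epsilon$ and adjoining the $n$-cycle on $\theta_{n+1},\dots,\theta_{2n}$) and your interleaved $\bar f$-chain $\beta \to \theta_n \to \theta_{n+1} \to \theta_{n-1} \to \cdots \to \theta_1 \to \theta_{2n} \to \gamma$ coincide with the maps defined there, and the product $\bar f\bar g$ and the reduction of the general case to the single-cycle case are handled the same way. The only difference is presentational: you verify that $\bar h$ traverses all $2n$ new points via the single-cycle check on $\sigma\tau$, where the paper simply displays the resulting chain.
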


\begin{proof}
As before, we shall prove the lemma in the case where there exists $n \in \Z_+$ such that $K_n = 1$, and for all $m \in \Z_+ \setminus \{n\}$ we have $K_m = 0$. 

Again, let $\alpha \in \Sigma$ be any element that is not initial in any of $\Gamma_f$, $\Gamma_g$, and $\Gamma_h$ (if any of them is a forward cycle). Write $\beta = (\alpha)f^{-1}$, $\gamma = (\alpha)f$, $\delta = (\alpha)g^{-1}$, $\epsilon = (\alpha)g$, $\zeta  = (\alpha)h^{-1}$, and $\eta = (\alpha)h$. Also, let $\Omega' = (\Omega \setminus \{\alpha\}) \dot{\cup} \{\theta_1, \theta_2, \dots, \theta_{2n}\}$. We shall next define injective endomaps $\bar{f}$, $\bar{g}$, and $\bar{h}$ on $\Omega'$.

Define $\bar{f}$ to act by 
$$\left\{ \begin{array}{ll}
\beta \mapsto \theta_{n} & \\
\theta_i \mapsto \theta_{2n+1-i} & \textrm{for } 1 \leq i \leq n\\
\theta_i \mapsto \theta_{2n-i} & \textrm{for } n+1 \leq i \leq 2n-1\\
\theta_{2n} \mapsto \gamma & \\
\lambda \mapsto(\lambda)f & \textrm{otherwise}
\end{array}\right.,$$ and define $\bar{g}$ to act by 
$$\left\{ \begin{array}{ll}
\delta \mapsto \theta_{1} & \\
\theta_i \mapsto \theta_{i+1} &\textrm{for } 1 \leq i \leq n-1, n+1 \leq i \leq 2n-1\\
\theta_n \mapsto \epsilon & \\
\theta_{2n} \mapsto \theta_{n+1} & \\
\lambda \mapsto (\lambda)g & \textrm{otherwise}
\end{array}\right..$$

The action of $\bar{f}$ on $\{\theta_1, \theta_2, \dots, \theta_{2n}\}$ can be visualized as follows: $$\cdots \rightarrow \beta \rightarrow \fbox{$\theta_{n}$} \rightarrow \theta_{n+1} \rightarrow \fbox{$\theta_{n-1}$} \rightarrow \theta_{n+2} \rightarrow \cdots \rightarrow \fbox{$\theta_{2}$} \rightarrow \theta_{2n-1} \rightarrow \fbox{$\theta_{1}$} \rightarrow \theta_{2n} \rightarrow \gamma \rightarrow \cdots .$$ Thus, $(\Gamma_f \setminus \{\alpha\}) \cup \{\theta_1, \theta_2, \dots, \theta_{2n}\}$ is an infinite cycle under $\bar{f}$, and otherwise $\bar{f}$ has the same cycles as $f$. It follows that for any bijection $a : \Omega \rightarrow \Omega'$, we have $f \sim a\bar{f}a^{-1}$. Similarly, the action of $\bar{g}$ on $\{\theta_1, \theta_2, \dots, \theta_{n}\}$ can be visualized as follows: $$\cdots \rightarrow \delta \rightarrow \theta_{1} \rightarrow \theta_{2} \rightarrow \cdots \rightarrow \theta_{n} \rightarrow \epsilon \rightarrow \cdots .$$ Thus, $(\Gamma_g \setminus \{\alpha\}) \cup \{\theta_1, \theta_2, \dots, \theta_{n}\}$ is an infinite cycle under $\bar{g}$, $\{\theta_{n+1}, \theta_{n+2}, \dots, \theta_{2n}\}$ is a cycle of cardinality $n$ under $\bar{g}$, and otherwise $\bar{g}$ has the same cycles as $g$. It follows that for any bijection $a : \Omega \rightarrow \Omega'$, we have $(a\bar{g}a^{-1})\C_{\mathrm{open}} = (g)\C_{\mathrm{open}} $, $(a\bar{g}a^{-1})\C_{\mathrm{fwd}} = (g)\C_{\mathrm{fwd}}$, $(a\bar{g}a^{-1})\C_n = (g)\C_n + 1$, and $(a\bar{g}a^{-1})\C_m = (g)\C_m $ for all $m \in \Z_+ \setminus \{n\}.$

Setting $\bar{h} = \bar{f}\bar{g}$, we see that $\bar{h}$ acts by 
$$\left\{ \begin{array}{ll}
\zeta \mapsto \theta_{1} & \\
\theta_1 \mapsto \theta_{n+1} & \\
\theta_i \mapsto \theta_{2n+2-i} & \textrm{for } 2 \leq i \leq n\\
\theta_i \mapsto \theta_{2n+1-i} & \textrm{for } n+1 \leq i \leq 2n-1\\
\theta_{2n} \mapsto \eta & \\
\lambda \mapsto (\lambda)h & \textrm{otherwise}
\end{array}\right..$$
On $\{\theta_1, \theta_2, \dots, \theta_{2n}\}$ the action of $\bar{h}$ can be visualized as follows:$$\cdots \rightarrow \zeta \rightarrow \theta_1 \rightarrow \fbox{$\theta_{n+1}$} \rightarrow \theta_{n} \rightarrow \fbox{$\theta_{n+2}$} \rightarrow \theta_{n-1} \rightarrow \cdots \rightarrow \fbox{$\theta_{2n-1}$} \rightarrow \theta_{2} \rightarrow \fbox{$\theta_{2n}$} \rightarrow \eta \rightarrow \cdots .$$ Thus, $(\Gamma_h \setminus \{\alpha\}) \cup \{\theta_1, \theta_2, \dots, \theta_{2n}\}$ is an infinite cycle under $\bar{h}$, and otherwise $\bar{h}$ has the same cycles as $h$.  It follows that for any bijection $a : \Omega \rightarrow \Omega'$, we have $h \sim a\bar{h}a^{-1}$. Also $a\bar{f}a^{-1}a\bar{g}a^{-1} = a\bar{h}a^{-1}$. Thus, setting $f' = a\bar{f}a^{-1}$, $g' = a\bar{g}a^{-1}$, and $h' = a\bar{h}a^{-1}$, we have proved the lemma in the desired special case.

As before, the general case and the final claim can be obtained by the same argument as in the last paragraph of the proof of Lemma~\ref{fin h}.
\end{proof}

\section{Open cycles} \label{open section}

The lemmas in this section are analogous to those of the previous one, except now we shall be adding open, rather than finite, cycles to our maps $f$ and $g$ in the equation $fg = h$. (Consideration of the open cycles of $h$ will be postponed until Section~\ref{construction section}.) We begin with $f$.

\begin{lemma} \label{open f}
Let $f, g, h \in \Inj(\Omega)$ be any three maps, each having at least one infinite cycle, such that $fg = h$, and let $K$ be a cardinal satisfying $\, 1 \leq K \leq \aleph_0$. Further, suppose that there are infinite cycles $\, \Gamma_f$, $\, \Gamma_g$, and $\, \Gamma_h$ of $f$, $g$, and $h$, respectively, such that $\, \Sigma =  \Gamma_f \cap  \Gamma_g \cap  \Gamma_h$ is infinite. Then there exist maps  $f', g', h' \in \Inj(\Omega)$ such that $f'g' = h'$, $g' \sim g$, $h' \sim h$, $(f')\C_{\mathrm{open}} = (f)\C_{\mathrm{open}}  + K$, $(f')\C_{\mathrm{fwd}} = (f)\C_{\mathrm{fwd}}$, and $(f')\C_n = (f)\C_n$ for all $n \in \Z_+.$

Furthermore, the maps $f'$, $g'$, and $h'$ can be chosen so that there are infinite cycles $\, \Gamma_{f'}$, $\, \Gamma_{g'}$, and $\, \Gamma_{h'}$ of $f'$, $g'$, and $h'$, respectively, such that $\, \Gamma_{f'} \cap  \Gamma_{g'} \cap  \Gamma_{h'}$ is infinite.
\end{lemma}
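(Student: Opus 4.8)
The plan is to mirror the structure of the proof of Lemma~\ref{fin h} (and its siblings, Lemmas~\ref{fin f} and~\ref{fin g}): reduce immediately to the special case $K = 1$, handle that case by a local surgery on a single element $\alpha$ lying in the common intersection $\Sigma = \Gamma_f \cap \Gamma_g \cap \Gamma_h$, and then obtain the general $1 \le K \le \aleph_0$ case and the ``furthermore" clause by the same bookkeeping argument given in the last paragraph of the proof of Lemma~\ref{fin h}. So the real content is the $K=1$ construction: starting from $fg=h$ on $\Omega$, I want to replace $\alpha$ by a finite block of new points $\theta_1, \dots, \theta_{2n}$ (here a small fixed number, $2$ new points will do, so I will just use $\theta_1, \theta_2$) on a new set $\Omega' = (\Omega \setminus \{\alpha\}) \,\dot\cup\, \{\theta_1, \theta_2\}$, and define $\bar f, \bar g \in \Inj(\Omega')$ agreeing with $f, g$ away from the surgery, such that (i) $\bar f$ has one extra open cycle compared to $f$ and otherwise the same cycle structure, (ii) $\bar g \sim g$, and (iii) $\bar h := \bar f \bar g \sim h$; then conjugate everything back to $\Omega$ by an arbitrary bijection $a : \Omega \to \Omega'$ and invoke Lemma~\ref{pass} to translate the cycle-count bookkeeping into the $\sim$ statements.

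Concretely, write $\beta = (\alpha)f^{-1}$, $\gamma = (\alpha)f$, $\delta = (\alpha)g^{-1}$, $\epsilon = (\alpha)g$, $\zeta = (\alpha)h^{-1}$, $\eta = (\alpha)h$, choosing $\alpha \in \Sigma$ not initial in any of $\Gamma_f, \Gamma_g, \Gamma_h$. The idea is to route the $\Gamma_f$-thread around $\theta_1$ while using $\theta_2$ as the seed of a brand-new doubly-infinite cycle of $\bar f$ — but a single new point cannot by itself be (part of) an open cycle, so what I actually do is create the new open cycle of $\bar f$ out of a doubly-infinite supply of points, which is exactly why the general construction adds a whole copy of $\Z$; in the ``$K=1$, one point at a time" reduction the way the earlier lemmas finesse this is to package the new open cycle together with the surgery so that $\bar f$ differs from $f$ only by splitting off new finite blocks, and then the union of infinitely many such blocks (taken over the disjoint decomposition $\Sigma = (\bigcup \Sigma_n) \cup \Delta$ used in Lemma~\ref{fin h}) forms the new open cycle. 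So in fact I will define $\bar f$ on $\{\theta_1, \theta_2\}$ by something like $\beta \mapsto \theta_1 \mapsto \gamma$ and $\theta_2 \mapsto \theta_2$'s successor-to-be, arranging that $\{\theta_2^{n,j}\}_{n,j}$, strung together across all the surgery sites, becomes a single $\Z$-indexed cycle; define $\bar g$ to absorb $\theta_1$ into $\Gamma_g$ (so $\delta \mapsto \theta_1 \mapsto \epsilon$ up to reindexing) while fixing or locally shuffling $\theta_2$ so that $\bar g \sim g$; and then compute $\bar h = \bar f \bar g$ and check, via the diagrams as in the earlier proofs, that $(\Gamma_h \setminus \{\alpha\})$ together with the inserted points reassembles into a single infinite cycle of the same type as $\Gamma_h$, with no spurious finite cycles, so $\bar h \sim h$.

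For the general case: assuming not all relevant cardinals vanish (else nothing to prove), write $\Sigma = (\bigcup_{n} \Sigma_n) \cup \Delta$ with the $\Sigma_n$ disjoint from $\Delta$, choose the union so that distinct chosen points are not $f^{\pm1}$-, $g^{\pm1}$-, or $h^{\pm1}$-images of one another and none is initial, perform the one-point surgery simultaneously at every chosen point (indexing the new points with superscripts), and then glue the new per-site finite blocks into $K$-many open cycles of $\bar f$ as described — $\Delta$ remains untouched and sits inside $\Gamma_{\bar f} \cap \Gamma_{\bar g} \cap \Gamma_{\bar h}$, giving the ``furthermore" clause. Finally pull back along any bijection $a : \Omega \to \Omega'$ and apply Lemma~\ref{pass}. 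The main obstacle is purely combinatorial: designing the explicit local permutation-patterns for $\bar f$ and $\bar g$ on the inserted points so that all three of ``$\bar f$ gains exactly one open cycle and nothing else changes", ``$\bar g \sim g$", and ``$\bar f \bar g \sim h$" hold simultaneously — i.e. getting the three interlocking cycle diagrams (for $\bar f$, for $\bar g$, and for the product $\bar h$) to close up correctly, with the open-cycle of $\bar f$ genuinely doubly infinite rather than accidentally finite or forward. Everything else is the same routine verification, with diagrams, that the earlier lemmas carry out, so I will present only the patterns and the product diagram and suppress the repeated bookkeeping.
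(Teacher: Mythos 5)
Your proposal correctly identifies the central difficulty --- an open cycle is doubly infinite, so it cannot be manufactured by a single finite local surgery of the kind used in Lemmas~\ref{fin h}--\ref{fin g} --- but it then stops exactly where the proof has to begin. You write that the main obstacle is ``designing the explicit local permutation-patterns for $\bar f$ and $\bar g$ on the inserted points so that all three interlocking cycle diagrams close up correctly,'' and announce that you will ``present only the patterns and the product diagram,'' but no such patterns or product computation ever appear: the only concrete data given are $\beta \mapsto \theta_1 \mapsto \gamma$ for $\bar f$ and $\delta \mapsto \theta_1 \mapsto \epsilon$ for $\bar g$ (which merely rename $\alpha$ as $\theta_1$), together with the unspecified promise that the points $\theta_2^{n,j}$ will be ``strung together'' into a $\Z$-cycle of $\bar f$ while $\bar g$ ``fixes or locally shuffles'' them. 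This is precisely the part that cannot be waved at: if $\bar g$ fixes the $\theta_2$'s it acquires new $1$-cycles and $\bar g \not\sim g$; if $\bar g$ splices them into $\Gamma_g$, then the action of $\bar h = \bar f\bar g$ on and into the $\theta_2$'s depends entirely on how the $\bar f$-chain of $\theta_2$'s interleaves with the splice points in $\Gamma_g$, and one must verify that $\bar h$ neither breaks $\Gamma_h$ into several cycles nor creates a spurious forward cycle out of the new points. That verification is the lemma.

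For comparison, the paper's proof does not delete $\alpha$ or perform per-point surgeries at all. It adjoins a disjoint copy $\{\theta_i\}_{i\in\Z}$ of $\Z$ to $\Omega$, lets $\bar f$ act as $f$ on $\Omega$ and as the shift on the new points (so the new open cycle of $\bar f$ is free, costing nothing on $\Omega$), and then defines $\bar g$ by threading the new points into $\Gamma_g$ via an infinite sequence of attachment points $\alpha_1,\alpha_2,\dots \in \Sigma$ and the specific folding pattern $\delta_i \mapsto \theta_{-i+1}$, $\theta_i \mapsto \alpha_i$ ($i \geq 1$), $\theta_i \mapsto \theta_{-i+1}$ ($i \leq 0$); the point of this pattern is that the explicit computation of $\bar h = \bar f\bar g$ then shows the new points also thread into $\Gamma_h$ as a single cycle of the same type. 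Your outline is compatible in spirit with this, but as written it is a plan for a proof rather than a proof: the construction that makes $\bar g \sim g$ and $\bar f\bar g \sim h$ hold simultaneously is missing.
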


\begin{proof}
Let us begin by proving the lemma in the case where $K = 1$. 

Write $\Sigma = \{\alpha_i\}_{i\in \Z_+} \cup \Delta$, where the union is disjoint and $|\Delta| = \aleph_0$. We may choose the elements $\alpha_i$ so that none of them is initial in $\Gamma_g$ and $\Gamma_h$ (if either of them is a forward cycle). Similarly, we may assume that for all $\alpha_i$ and $\alpha_j$ we have $\alpha_j \notin \{(\alpha_i)g^{-1}, (\alpha_i)h^{-1}\}$. For each $i \in \Z_+$, write $\delta_i = (\alpha_i)g^{-1}$ and $\zeta_i  = (\alpha_i)h^{-1}$. Also, let $\Omega' = \Omega \dot{\cup} \{\theta_i\}_{i\in \Z}$. We shall next define injective endomaps $\bar{f}$, $\bar{g}$, and $\bar{h}$ on $\Omega'$.

Define $\bar{f}$ to act by 
$$\left\{ \begin{array}{ll}
\theta_i \mapsto \theta_{i+1} & \textrm{for } i\in \Z\\
\lambda \mapsto (\lambda)f & \textrm{for } \lambda \in \Omega
\end{array}\right.,$$ and define $\bar{g}$ to act by 
$$\left\{ \begin{array}{ll}
\delta_i \mapsto \theta_{-i+1} & \textrm{for } i \geq 1\\
\theta_i \mapsto \alpha_{i} & \textrm{for } i \geq 1\\
\theta_{i} \mapsto \theta_{-i+1} & \textrm{for } i \leq 0\\
\lambda \mapsto (\lambda)g & \textrm{otherwise}
\end{array}\right..$$

Thus, $\{\theta_i\}_{i\in \Z}$ is an open cycle under $\bar{f}$, and otherwise $\bar{f}$ has the same cycles as $f$. It follows that for any bijection $a : \Omega \rightarrow \Omega'$, we have $(a\bar{f}a^{-1})\C_{\mathrm{open}} = (f)\C_{\mathrm{open}}  + 1$, $(a\bar{f}a^{-1})\C_{\mathrm{fwd}} = (f)\C_{\mathrm{fwd}}$, and $(a\bar{f}a^{-1})\C_n = (f)\C_n$ for all $n \in \Z_+$, by Lemma~\ref{pass}. For each $i \in \Z_+$, the action of $\bar{g}$ on the elements $\alpha_i$, $\delta_i$, and $\theta_i$ can be visualized as follows: $$\cdots \rightarrow \delta_i \rightarrow \theta_{-i+1} \rightarrow \theta_{i} \rightarrow \alpha_i \rightarrow \cdots .$$ Thus, $\Gamma_g \cup \{\theta_i\}_{i\in \Z}$ is an infinite cycle under $\bar{g}$, and otherwise $\bar{g}$ has the same cycles as $g$. It follows that for any bijection $a : \Omega \rightarrow \Omega'$, we have $g \sim a\bar{g}a^{-1}$, by Lemma~\ref{pass}.

Setting $\bar{h} = \bar{f}\bar{g}$, we see that $\bar{h}$ acts by 
$$\left\{ \begin{array}{ll}
\zeta_i \mapsto \theta_{-i+1} & \textrm{for } i \geq 1 \\
\theta_i \mapsto \alpha_{i+1} & \textrm{for } i \geq 0\\
\theta_i \mapsto \theta_{-i} & \textrm{for } i \leq -1\\
\lambda \mapsto (\lambda)h & \textrm{otherwise}
\end{array}\right..$$
For each $i \geq 2$, the action of $\bar{h}$ on the elements $\alpha_i$, $\zeta_i$, and $\theta_i$ can be visualized as follows: $$\cdots \rightarrow \zeta_i \rightarrow \theta_{-i+1} \rightarrow \theta_{i-1} \rightarrow \alpha_i \rightarrow \cdots .$$ Also, for $i = 1$ we have $$\cdots \rightarrow \zeta_1 \rightarrow \theta_0 \rightarrow \alpha_1 \rightarrow \cdots .$$ Thus, $\Gamma_h \cup \{\theta_i\}_{i\in \Z}$ is an infinite cycle under $\bar{h}$, and otherwise $\bar{h}$ has the same cycles as $h$. It follows that for any bijection $a : \Omega \rightarrow \Omega'$, we have $h \sim a\bar{h}a^{-1}$. Therefore, setting $f' = a\bar{f}a^{-1}$, $g' = a\bar{g}a^{-1}$, and $h' = a\bar{h}a^{-1}$, we have proved the lemma in the desired special case.

For the general case, write $\Sigma = (\bigcup_{n=1}^K \Sigma_n) \cup \Delta$, where the union is disjoint, each $|\Sigma_n| = \aleph_0$, and $|\Delta| = \aleph_0$. Moreover, since $\Sigma$ is infinite, we can choose $\bigcup_{n=1}^K \Sigma_n$ in such a way that no $\alpha\in \bigcup_{n=1}^K \Sigma_n$ is initial in $\Gamma_g$ or $\Gamma_h$ (if either of them is a forward cycle), and so that for all $\alpha, \alpha' \in \bigcup_{n=1}^K \Sigma_n$ we have $\alpha' \notin \{(\alpha)g^{-1}, (\alpha)h^{-1}\}$. For each $n \in \Z_+$, write $\Sigma_n = \{\alpha^n_i\}_{i\in \Z_+}$. Then, we can perform the above construction for each $\Sigma_n$ simultaneously (by simply adding the superscript ``$n$" to each $\delta_i$, $\zeta_i$, and $\theta_i$ above) and define the maps $\bar{f}$, $\bar{g}$, $\bar{h}$ on $\Omega' = \Omega \cup \bigcup_{n=1}^K \Delta_n$, where $\Delta_n = \{\theta_i^n\}_{i\in \Z}$ is the set corresponding to $\Sigma_n$. The resulting $\bar{f}$ will have each $\Delta_n$ as an open cycle, and otherwise $\bar{f}$ will have the same cycles as $f$. Therefore, $(\bar{f})\C_{\mathrm{open}} = (f)\C_{\mathrm{open}}  + K$, $(\bar{f})\C_{\mathrm{fwd}} = (f)\C_{\mathrm{fwd}}$, and $(\bar{f})\C_n = (f)\C_n$ for all $n \in \Z_+$. Also, $\bar{g}$ will have $\Gamma_{\bar{g}} = \Gamma_g \cup \bigcup_{n=1}^K \Delta_n$ as an infinite cycle, and otherwise $\bar{g}$ will have the same cycles as $g$. Similarly, $\bar{h}$ will have $\Gamma_{\bar{h}} = \Gamma_h \cup \bigcup_{n=1}^K \Delta_n$ as an infinite cycle, and otherwise $\bar{h}$ will have the same cycles as $h$. We also note that $\Delta \subseteq \Gamma_{f} \cap \Gamma_{g} \cap \Gamma_{h} \subseteq \Gamma_{\bar{f}} \cap \Gamma_{\bar{g}} \cap \Gamma_{\bar{h}}$, and hence the latter is infinite. Thus, for any bijection $a : \Omega \rightarrow \Omega'$, the maps $f' = a\bar{f}a^{-1}$, $g' = a\bar{g}a^{-1}$, and $h' = a\bar{h}a^{-1}$ will have the desired properties.
\end{proof}

We did not actually use the hypothesis that $\Gamma_f \cap  \Gamma_g \cap  \Gamma_h$ is infinite to prove the main claim of the above lemma -- only that $\Gamma_g \cap  \Gamma_h$ is infinite. The full strength of the hypothesis was only needed to prove the final claim of the lemma, the importance of which will become apparent in Section~\ref{construction section}. A similar observation applies to the next lemma, which is an analogue of Lemma~\ref{open f} for $g$, in the equation $fg = h$, as well as to the lemmas in Section~\ref{fwd section}.

\begin{lemma} \label{open g}
Let $f, g, h \in \Inj(\Omega)$ be any three maps, each having at least one infinite cycle, such that $fg = h$, and let $K$ be a cardinal satisfying $\, 1 \leq K \leq \aleph_0$. Further, suppose that there are infinite cycles $\, \Gamma_f$, $\, \Gamma_g$, and $\, \Gamma_h$ of $f$, $g$, and $h$, respectively, such that $\, \Sigma =  \Gamma_f \cap  \Gamma_g \cap  \Gamma_h$ is infinite. Then there exist maps  $f', g', h' \in \Inj(\Omega)$ such that $f'g' = h'$, $f' \sim f$, $h' \sim h$, $(g')\C_{\mathrm{open}} = (g)\C_{\mathrm{open}}  + K$, $(g')\C_{\mathrm{fwd}} = (g)\C_{\mathrm{fwd}}$, and $(g')\C_n = (g)\C_n$ for all $n \in \Z_+.$

Furthermore, the maps $f'$, $g'$, and $h'$ can be chosen so that there are infinite cycles $\, \Gamma_{f'}$, $\, \Gamma_{g'}$, and $\, \Gamma_{h'}$ of $f'$, $g'$, and $h'$, respectively, such that $\, \Gamma_{f'} \cap  \Gamma_{g'} \cap  \Gamma_{h'}$ is infinite.
\end{lemma}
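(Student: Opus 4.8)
The plan is to imitate the proof of Lemma~\ref{open f} almost verbatim, interchanging the roles of the two factors. There the ``simple'' map was $\bar{f}$ (acting as $f$ on $\Omega$ and carrying the new points around in a single isolated open cycle), while $\bar{g}$ and $\bar{h}=\bar{f}\bar{g}$ absorbed the new points into their infinite cycles $\Gamma_g,\Gamma_h$. Now the new open cycle must appear in $g'$, so $\bar{g}$ will be the simple map and $\bar{f},\bar{h}$ will do the absorbing.

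First I would reduce to the case $K=1$; the passage to arbitrary $K\le\aleph_0$ is then word-for-word the last paragraph of the proof of Lemma~\ref{open f} (split $\Sigma=(\bigcup_{n=1}^K\Sigma_n)\cup\Delta$ with all pieces of cardinality $\aleph_0$, run the $K=1$ construction simultaneously over the $\Sigma_n$, absorbing all the new cycles into one big infinite cycle of $\bar{f}$ and of $\bar{h}$, and keep $\Delta$ aside so the final triple-intersection claim survives). For $K=1$, adjoin a $\Z$-indexed set $\{\theta_i\}_{i\in\Z}$, put $\Omega'=\Omega\, \dot{\cup}\, \{\theta_i\}_{i\in\Z}$, and define $\bar{g}$ to act as $g$ on $\Omega$ and by $\theta_i\mapsto\theta_{i+1}$. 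Then $\{\theta_i\}_{i\in\Z}$ is a fresh isolated open cycle of $\bar{g}$ while every other cycle of $\bar{g}$ is a cycle of $g$, so for any bijection $a:\Omega\to\Omega'$ the map $g'=a\bar{g}a^{-1}$ satisfies $(g')\C_{\mathrm{open}}=(g)\C_{\mathrm{open}}+1$, $(g')\C_{\mathrm{fwd}}=(g)\C_{\mathrm{fwd}}$, and $(g')\C_n=(g)\C_n$ for all $n\in\Z_+$, by Lemma~\ref{pass}.

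Next I would build $\bar{f}$. Choose a countable ``sparse'' subset $\{\alpha_i\}_{i\in\Z_+}\subseteq\Sigma$ — sparse meaning, say, that no $\alpha_i$ is initial in $\Gamma_f$ or $\Gamma_h$ and that $\alpha_j\notin\{(\alpha_i)f,(\alpha_i)f^{-1},(\alpha_i)h,(\alpha_i)h^{-1}\}$ for all $i,j$, which is possible since $\Sigma$ is infinite — and set $\gamma_i=(\alpha_i)f$, $\Delta=\Sigma\setminus\{\alpha_i\}$. Define $\bar{f}$ to agree with $f$ on $\Omega\setminus\{\alpha_i\}$ and, on the remaining points, to re-route for each $i$ the edge $\alpha_i\to\gamma_i$ of $\Gamma_f$ along the $\theta$'s, e.g.\ by $\alpha_i\mapsto\theta_{-i+1}$ for $i\ge1$, $\theta_i\mapsto\theta_{-i+1}$ for $i\le0$, and $\theta_i\mapsto\gamma_i$ for $i\ge1$ (so that $\alpha_i\to\theta_{-i+1}\to\theta_i\to\gamma_i$ and each $\theta_j$ is used exactly once). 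One checks that $\bar{f}$ is a well-defined injection, that $\Gamma_f\cup\{\theta_i\}_{i\in\Z}$ is a single infinite cycle of $\bar{f}$ of the same type as $\Gamma_f$, and that all other cycles of $\bar{f}$ are cycles of $f$; hence $f'=a\bar{f}a^{-1}\sim f$ by Lemma~\ref{pass}.

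Finally, set $\bar{h}=\bar{f}\bar{g}$. The crucial point — and the step I expect to be the main obstacle — is to arrange that the single detour formula for $\bar{f}$ does double duty: because $\bar{g}$ acts as $g$ on $\Omega$, the exit edge $\theta_i\mapsto\gamma_i$ turns, under $\bar{h}$, into $\theta_i\mapsto(\gamma_i)g=(\alpha_i)fg=(\alpha_i)h$, while the entry point gives $(\alpha_i)\bar{h}=\theta_{-i+2}$ and the middle $\theta$-edges merely get shifted; so $\bar{h}$ agrees with $h$ on $\Omega\setminus\{\alpha_i\}$ and re-routes the edge $\alpha_i\to(\alpha_i)h$ of $\Gamma_h$ through the $\theta$'s. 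Choosing the $\Z$-indexing so that this never creates a spurious finite cycle among the $\theta_i$, never splits $\Gamma_h$, and simultaneously leaves $\Gamma_f$ a single cycle is the delicate bookkeeping; the sparseness conditions on $\{\alpha_i\}$ are exactly what prevents the re-routings at distinct $\alpha_i$ from colliding. Granting this, $\Gamma_h\cup\{\theta_i\}_{i\in\Z}$ is one infinite cycle of $\bar{h}$ of the same type as $\Gamma_h$, with all other cycles unchanged, so $h'=a\bar{h}a^{-1}\sim h$. Since $\Delta\subseteq\Gamma_f\cap\Gamma_g\cap\Gamma_h$ is never touched, $\Delta$ lies in $\Gamma_f\cup\{\theta_i\}$, in the unchanged cycle $\Gamma_g$ of $\bar{g}$, and in $\Gamma_h\cup\{\theta_i\}$, so after transporting along $a$ these three infinite cycles have infinite intersection, giving the ``furthermore'' assertion.
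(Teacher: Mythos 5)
Your proposal is correct and is essentially identical to the paper's own proof: the same $\bar{g}$ (shift on a new $\Z$-indexed open cycle), the same rerouting $\alpha_i\mapsto\theta_{-i+1}$, $\theta_i\mapsto\theta_{-i+1}$ ($i\le 0$), $\theta_i\mapsto\gamma_i$ ($i\ge 1$) for $\bar{f}$, and the same reduction of the general $K$ to $K=1$ as in Lemma~\ref{open f}. The bookkeeping you flagged as the main obstacle does work out exactly as you computed: under $\bar{h}=\bar{f}\bar{g}$ one gets $\alpha_i\mapsto\theta_{-i+2}$, $\theta_i\mapsto\theta_{-i+2}$ ($i\le 0$), $\theta_i\mapsto(\alpha_i)h$ ($i\ge 1$), so each $\theta_j$ is used exactly once and $\Gamma_h\cup\{\theta_i\}_{i\in\Z}$ is a single cycle of the same type, which is precisely the verification given in the paper.
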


\begin{proof}
As before, let us begin by proving the lemma in the case where $K = 1$. 

Write $\Sigma = \{\alpha_i\}_{i\in \Z_+} \cup \Delta$, where the union is disjoint and $|\Delta| = \aleph_0$. We may choose the elements $\alpha_i$ so that for all $\alpha_i$ and $\alpha_j$ we have $\alpha_j \notin \{(\alpha_i)f, (\alpha_i)h\}$. For each $i \in \Z_+$, write $\gamma_i = (\alpha_i)f$ and $\eta_i  = (\alpha_i)h$. Also, let $\Omega' = \Omega \dot{\cup} \{\theta_i\}_{i\in \Z}$. We shall next define injective endomaps $\bar{f}$, $\bar{g}$, and $\bar{h}$ on $\Omega'$.

Define $\bar{f}$ to act by 
$$\left\{ \begin{array}{ll}
\alpha_i \mapsto \theta_{-i+1} & \textrm{for } i \geq 1\\
\theta_i \mapsto \gamma_{i} & \textrm{for } i \geq 1\\
\theta_{i} \mapsto \theta_{-i+1} & \textrm{for } i \leq 0\\
\lambda \mapsto (\lambda)f & \textrm{otherwise}
\end{array}\right.,$$ and define $\bar{g}$ to act by 
$$\left\{ \begin{array}{ll}
\theta_i \mapsto \theta_{i+1} & \textrm{for } i\in \Z\\
\lambda \mapsto (\lambda)g & \textrm{for } \lambda \in \Omega
\end{array}\right..$$

For each $i \in \Z_+$, the action of $\bar{f}$ on the elements $\alpha_i$, $\gamma_i$, and $\theta_i$ can be visualized as follows: $$\cdots \rightarrow \alpha_i \rightarrow \theta_{-i+1} \rightarrow \theta_{i} \rightarrow \gamma_i \rightarrow \cdots .$$ Thus, $\Gamma_f \cup \{\theta_i\}_{i\in \Z}$ is an infinite cycle under $\bar{f}$, and otherwise $\bar{f}$ has the same cycles as $f$. It follows that for any bijection $a : \Omega \rightarrow \Omega'$, we have $f \sim a\bar{f}a^{-1}$. Also, clearly, $\{\theta_i\}_{i\in \Z}$ is an open cycle under $\bar{g}$, and otherwise $\bar{g}$ has the same cycles as $g$. Therefore, for any bijection $a : \Omega \rightarrow \Omega'$, we have $(a\bar{g}a^{-1})\C_{\mathrm{open}} = (g)\C_{\mathrm{open}}  + 1$, $(a\bar{g}a^{-1})\C_{\mathrm{fwd}} = (g)\C_{\mathrm{fwd}}$, and $(a\bar{g}a^{-1})\C_n = (g)\C_n$ for all $n \in \Z_+.$

Setting $\bar{h} = \bar{f}\bar{g}$, we see that $\bar{h}$ acts by 
$$\left\{ \begin{array}{ll}
\alpha_i \mapsto \theta_{-i+2} & \textrm{for } i \geq 1\\
\theta_i \mapsto \eta_{i} & \textrm{for } i \geq 1\\
\theta_i \mapsto \theta_{-i+2} & \textrm{for } i \leq 0\\
\lambda \mapsto (\lambda)h & \textrm{otherwise}
\end{array}\right..$$
For each $i \geq 2$, the action of $\bar{h}$ on the elements $\alpha_i$, $\eta_i$, and $\theta_i$ can be visualized as follows: $$\cdots \rightarrow \alpha_i \rightarrow \theta_{-i+2} \rightarrow \theta_{i} \rightarrow \eta_i \rightarrow \cdots .$$ Also, for $i = 1$ we have $$\cdots \rightarrow \alpha_1 \rightarrow \theta_1 \rightarrow \eta_1 \rightarrow \cdots .$$ Thus, $\Gamma_h \cup \{\theta_i\}_{i\in \Z}$ is an infinite cycle under $\bar{h}$, and otherwise $\bar{h}$ has the same cycles as $h$. It follows that for any bijection $a : \Omega \rightarrow \Omega'$, we have $h \sim a\bar{h}a^{-1}$. Therefore, setting $f' = a\bar{f}a^{-1}$, $g' = a\bar{g}a^{-1}$, and $h' = a\bar{h}a^{-1}$, we have proved the lemma in the desired special case.

The general case and the final claim can be obtained by the same argument as in the last paragraph of the proof of Lemma~\ref{open f}, so we omit it here, to minimize repetition.
\end{proof}

\section{Forward cycles} \label{fwd section}

In this section we shall show how to add forward cycles to the maps $f$ and $g$ in our equation $fg = h$. Since the numbers of forward cycles in the cycle decompositions $f$ and $g$ determine the number of forward cycles in the cycle decomposition of $h$, by Lemmas~\ref{fwd cycles} and~\ref{coimage}, no separate treatment of the latter is needed.  Let us begin with $f$.

\begin{lemma} \label{fwd f}
Let $f, g, h \in \Inj(\Omega)$ be any three maps, each having at least one infinite cycle, such that $fg = h$, and let $K$ be a cardinal satisfying $\, 1 \leq K \leq \aleph_0$. Further, suppose that there are infinite cycles $\, \Gamma_f$, $\, \Gamma_g$, and $\, \Gamma_h$ of $f$, $g$, and $h$, respectively, such that $\, \Sigma =  \Gamma_f \cap  \Gamma_g \cap  \Gamma_h$ is infinite. Then there exist maps  $f', g', h' \in \Inj(\Omega)$ having the following properties: 
\begin{enumerate}
\item[$(1)$] $f'g' = h';$ 
\item[$(2)$] $g' \sim g;$ 
\item[$(3)$] $(f')\C_{\mathrm{fwd}} = (f)\C_{\mathrm{fwd}}  + K$, $(f')\C_{\mathrm{open}} = (f)\C_{\mathrm{open}}$, $(f')\C_n = (f)\C_n$ for all $n \in \Z_+;$
\item[$(4)$] $(h')\C_{\mathrm{fwd}} = (h)\C_{\mathrm{fwd}}  + K$, $(h')\C_{\mathrm{open}} = (h)\C_{\mathrm{open}}$, $(h')\C_n = (h)\C_n$ for all $n \in \Z_+.$
\end{enumerate}

Furthermore, the maps $f'$, $g'$, and $h'$ can be chosen so that there are infinite cycles $\, \Gamma_{f'}$, $\, \Gamma_{g'}$, and $\, \Gamma_{h'}$ of $f'$, $g'$, and $h'$, respectively, such that $\, \Gamma_{f'} \cap  \Gamma_{g'} \cap  \Gamma_{h'}$ is infinite.
\end{lemma}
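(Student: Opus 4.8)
The plan is to mimic the structure of the proofs of Lemmas~\ref{fin h}--\ref{open g}: first dispatch the case $K = 1$ by an explicit surgery on a single shared infinite cycle, and then bootstrap to arbitrary $K \leq \aleph_0$ by performing that surgery simultaneously on countably many disjoint pieces carved out of $\Sigma$, reserving an infinite sub-piece $\Delta$ of $\Sigma$ untouched so that the final ``intersection is infinite'' claim is immediate. Concretely, for the $K=1$ case I would peel off a countable sequence $\{\alpha_i\}_{i \in \Z_+}$ from $\Sigma$ (disjoint from an infinite remainder $\Delta$), chosen so that no $\alpha_i$ is initial in $\Gamma_g$ or $\Gamma_h$ and so that no $\alpha_j$ lies in $\{(\alpha_i)g^{-1}, (\alpha_i)h^{-1}\}$, and set $\delta_i = (\alpha_i)g^{-1}$, $\zeta_i = (\alpha_i)h^{-1}$. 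I then adjoin a forward cycle's worth of new points $\{\theta_i\}_{i \in \N}$ to form $\Omega'$, and define $\bar f$ to have $\{\theta_i\}_{i\in\N}$ as a new \emph{forward} cycle (so $\theta_0$ is the initial element, $\theta_i \mapsto \theta_{i+1}$) while leaving $\Omega$ alone; since adding a forward cycle to $f$ forces (via $fg=h$ and Lemma~\ref{coimage}/Lemma~\ref{fwd cycles}) a new forward cycle for $h$, I must route the $\theta_i$'s back into $\Gamma_h$ appropriately.

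The delicate point is choosing $\bar g$. Unlike Lemma~\ref{open f}, here $\bar g$ cannot simply absorb the $\theta_i$'s into $\Gamma_g$ in a way that closes them up into an open cycle, because then $\bar h = \bar f \bar g$ would not gain a forward cycle. Instead I want $\bar g$ to act on the $\theta_i$'s (together with some $\delta_i$'s and $\alpha_i$'s) so that (a) $\bar g \sim g$ — i.e. the $\theta_i$'s get woven into $\Gamma_g$ without changing its cycle type, using the $\delta_i \to \cdots \to \alpha_i$ bridges exactly as in Lemma~\ref{open f} — and (b) $\bar h = \bar f \bar g$ ends up with $\{\theta_0, \theta_1, \dots\}$ (or an appropriate reindexing) as a genuine forward cycle and otherwise the cycles of $h$. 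The computation $\theta_i \bar f \bar g = \theta_{i+1}\bar g$ must be tracked carefully; the natural attempt is to let $\bar g$ send $\theta_i \mapsto \alpha_i$ for $i \geq 1$ and route $\delta_i \mapsto \theta_{?}$, mirroring Lemma~\ref{open f} but with the crucial difference that $\bar f$ has no preimage for $\theta_0$, so $\theta_0$ becomes the initial element of a forward cycle of $\bar h$. I expect the indexing to work out to something like $\bar h: \zeta_i \mapsto \theta_{?}$, $\theta_i \mapsto \alpha_{i}$ for $i \geq 1$, with $\theta_0$ initial, giving $\Gamma_h$ enriched by an open tail plus one new forward cycle — and I should double-check that exactly \emph{one} new forward cycle (not zero, not two) is produced, which is the combinatorial heart of the argument.

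For the general case $1 \leq K \leq \aleph_0$, I would write $\Sigma = (\bigcup_{n=1}^{K} \Sigma_n) \cup \Delta$ with each $|\Sigma_n| = \aleph_0$, $|\Delta| = \aleph_0$, all disjoint, choose the $\Sigma_n$ to avoid initial elements and the forbidden images/preimages as before, and run the $K=1$ construction on each $\Sigma_n$ in parallel (superscripting $\delta_i, \zeta_i, \theta_i$ by $n$), adjoining $\bigcup_{n=1}^K \{\theta_i^n\}_{i\in\N}$ to get $\Omega'$. Then $\bar f$ gains exactly $K$ new forward cycles and no other change, giving $(f')\C_{\mathrm{fwd}} = (f)\C_{\mathrm{fwd}} + K$ with all other cycle-counts of $f$ preserved; likewise $\bar h = \bar f \bar g$ gains exactly $K$ new forward cycles; $\bar g \sim g$; and $\Delta \subseteq \Gamma_{\bar f} \cap \Gamma_{\bar g} \cap \Gamma_{\bar h}$, so the final claim holds. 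Transporting back to $\Omega$ via any bijection $a: \Omega \to \Omega'$ and invoking Lemma~\ref{pass} finishes it, exactly as in the earlier lemmas. The main obstacle, as noted, is pinning down the explicit formulas for $\bar g$ (and verifying the induced $\bar h$) so that the forward-cycle bookkeeping is exactly right; everything else is routine by analogy with Lemmas~\ref{fin h}--\ref{open g}.
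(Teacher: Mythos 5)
Your overall skeleton is exactly the paper's: handle $K=1$ by a local surgery using points $\alpha_i$ of $\Sigma$ (avoiding initial elements, with $\delta_i=(\alpha_i)g^{-1}$, $\zeta_i=(\alpha_i)h^{-1}$), adjoin new points $\theta_i$ on which $\bar f$ acts as a shift (one new forward cycle), weave the $\theta_i$ into $\Gamma_g$ so that $\bar g\sim g$, then parallelize over $K$ disjoint copies while reserving $\Delta$, and transport back via a bijection and Lemma~\ref{pass}. But the step you yourself flag as ``the combinatorial heart'' --- the explicit definition of $\bar g$ and the verification that $\bar h=\bar f\bar g$ gains exactly one forward cycle and is otherwise unchanged --- is left unresolved, and the ``natural attempt'' you sketch does not work. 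If $\bar g$ sends $\theta_i\mapsto\alpha_i$ (one new point per detour) and $\delta_i\mapsto\theta_{\,?}$, then \emph{every} $\theta$ must lie on some detour of $\bar g$ ending at the correct $\alpha_i$ (otherwise $\bar g\not\sim g$), and consequently in $\bar h$ one gets $(\zeta_i)\bar h=(\delta_i)\bar g=\theta_{\,?}$ and $(\theta_j)\bar h=(\theta_{j+1})\bar g=\alpha_{j'}$ with $j'\neq i$ in general: the detour from $\zeta_i$ terminates at the \emph{wrong} $\alpha$, i.e.\ $\Gamma_h$ gets rerouted. Since the $\alpha_i$ sit at arbitrary positions along $\Gamma_h$, such rerouting is uncontrolled; for instance it can splice $\zeta_i\to\theta\to\alpha_{i+1}$ onto a short $h$-segment returning to $\zeta_i$ and thereby create a spurious finite cycle, violating $(h')\C_n=(h)\C_n$, and it need not produce the single new forward cycle required in (4).

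The missing idea, which is the actual content of the paper's proof, is to make the detours in $\bar g$ two points long: with $\Omega'=\Omega\,\dot\cup\,\{\theta_i\}_{i\in\Z_+}$ and $\bar f$ the shift $\theta_i\mapsto\theta_{i+1}$, define $\bar g$ by $\delta_i\mapsto\theta_{2i-1}\mapsto\theta_{2i}\mapsto\alpha_i$ and $\bar g=g$ elsewhere. Then $\bar g\sim g$ (each detour re-enters $\Gamma_g$ at the same $\alpha_i$), and in $\bar h=\bar f\bar g$ one computes $(\zeta_i)\bar h=(\delta_i)\bar g=\theta_{2i-1}$, $(\theta_{2i-1})\bar h=(\theta_{2i})\bar g=\alpha_i$, $(\theta_{2i})\bar h=(\theta_{2i+1})\bar g=\theta_{2i+2}$: the odd-indexed points are detours of $\Gamma_h$ ending at the \emph{same} $\alpha_i$ (so $\Gamma_h$ keeps its type and no other cycle of $h$ is touched), while the even-indexed points are mapped among themselves and form exactly one new forward cycle. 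Without this (or an equivalent) explicit choice, conditions (3) and (4) are not established, so as written the proposal has a genuine gap precisely where it defers the bookkeeping; the rest (parallelization over $K$, the reserved infinite set $\Delta$ giving the final claim, and the conjugation by a bijection $\Omega\to\Omega'$) is routine and matches the paper.
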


\begin{proof}
Let us begin by proving the lemma in the case where $K = 1$. 

Write $\Sigma = \{\alpha_i\}_{i\in \Z_+} \cup \Delta$, where the union is disjoint and $|\Delta| = \aleph_0$. We may choose the elements $\alpha_i$ so that none of them is initial in $\Gamma_g$ and $\Gamma_h$ (if either of them is a forward cycle). Similarly, we may assume that for all $\alpha_i$ and $\alpha_j$ we have $\alpha_j \notin \{(\alpha_i)g^{-1}, (\alpha_i)h^{-1}\}$. For each $i \in \Z_+$, write $\delta_i = (\alpha_i)g^{-1}$ and $\zeta_i  = (\alpha_i)h^{-1}$. Also, let $\Omega' = \Omega \dot{\cup} \{\theta_i\}_{i\in \Z_+}$. We shall next define injective endomaps $\bar{f}$, $\bar{g}$, and $\bar{h}$ on $\Omega'$.

Define $\bar{f}$ to act by 
$$\left\{ \begin{array}{ll}
\theta_i \mapsto \theta_{i+1} & \textrm{for } i\in \Z_+\\
\lambda \mapsto (\lambda)f & \textrm{for } \lambda \in \Omega
\end{array}\right.,$$ and define $\bar{g}$ to act by 
$$\left\{ \begin{array}{ll}
\delta_i \mapsto \theta_{2i-1} & \textrm{for } i\in \Z_+\\
\theta_{2i-1} \mapsto \theta_{2i} & \textrm{for } i\in \Z_+\\
\theta_{2i} \mapsto \alpha_{i} & \textrm{for } i\in \Z_+\\
\lambda \mapsto (\lambda)g & \textrm{otherwise}
\end{array}\right..$$

Thus, $\{\theta_i\}_{i\in \Z_+}$ is a forward cycle under $\bar{f}$, and otherwise $\bar{f}$ has the same cycles as $f$. It follows that for any bijection $a : \Omega \rightarrow \Omega'$, we have $(a\bar{f}a^{-1})\C_{\mathrm{fwd}} = (f)\C_{\mathrm{fwd}} + 1$, $(a\bar{f}a^{-1})\C_{\mathrm{open}} = (f)\C_{\mathrm{open}} $, and $(a\bar{f}a^{-1})\C_n = (f)\C_n$ for all $n \in \Z_+$, by Lemma~\ref{pass}. For each $i \in \Z_+$, the action of $\bar{g}$ on the elements $\alpha_i$, $\delta_i$, and $\theta_i$ can be visualized as follows: $$\cdots \rightarrow \delta_i \rightarrow \theta_{2i-1} \rightarrow \theta_{2i} \rightarrow \alpha_i \rightarrow \cdots .$$ Thus, $\Gamma_g \cup \{\theta_i\}_{i\in \Z_+}$ is an infinite cycle under $\bar{g}$, and otherwise $\bar{g}$ has the same cycles as $g$. It follows that for any bijection $a : \Omega \rightarrow \Omega'$, we have $g \sim a\bar{g}a^{-1}$.

Setting $\bar{h} = \bar{f}\bar{g}$, we see that $\bar{h}$ acts by 
$$\left\{ \begin{array}{ll}
\zeta_i \mapsto \theta_{2i-1} & \textrm{for } i\in \Z_+\\
\theta_{2i-1} \mapsto \alpha_{i} & \textrm{for } i\in \Z_+\\
\theta_{2i} \mapsto \theta_{2i+2} & \textrm{for } i\in \Z_+\\
\lambda \mapsto (\lambda)h & \textrm{otherwise}
\end{array}\right..$$
For each $i \in \Z_+$, the action of $\bar{h}$ on the elements $\alpha_i$, $\zeta_i$, and $\theta_{2i-1}$ can be visualized as follows: $$\cdots \rightarrow \zeta_i \rightarrow \theta_{2i-1} \rightarrow \alpha_i \rightarrow \cdots .$$ Thus, $\Gamma_h \cup \{\theta_{2i-1}\}_{i\in \Z}$ is an infinite cycle under $\bar{h}$, $\{\theta_{2i}\}_{i\in \Z_+}$ is a forward cycle under $\bar{h}$, and otherwise $\bar{h}$ has the same cycles as $h$. It follows that for any bijection $a : \Omega \rightarrow \Omega'$, we have $(a\bar{h}a^{-1})\C_{\mathrm{fwd}} = (h)\C_{\mathrm{fwd}} + 1$, $(a\bar{h}a^{-1})\C_{\mathrm{open}} = (h)\C_{\mathrm{open}} $, and $(a\bar{h}a^{-1})\C_n = (h)\C_n$ for all $n \in \Z_+.$ Therefore, setting $f' = a\bar{f}a^{-1}$, $g' = a\bar{g}a^{-1}$, and $h' = a\bar{h}a^{-1}$, we have proved the lemma in the desired special case.

The general case and the final claim can be obtained by the same argument as in the last paragraph of the proof of Lemma~\ref{open f}, so we omit it here.
\end{proof}

We conclude the section with an analogue of the previous lemma for $g$.

\begin{lemma} \label{fwd g}
Let $f, g, h \in \Inj(\Omega)$ be any three maps, each having at least one infinite cycle, such that $fg = h$, and let $K$ be a cardinal satisfying $\, 1 \leq K \leq \aleph_0$. Further, suppose that there are infinite cycles $\, \Gamma_f$, $\, \Gamma_g$, and $\, \Gamma_h$ of $f$, $g$, and $h$, respectively, such that $\, \Sigma =  \Gamma_f \cap  \Gamma_g \cap  \Gamma_h$ is infinite. Then there exist maps  $f', g', h' \in \Inj(\Omega)$ having the following properties: 
\begin{enumerate}
\item[$(1)$] $f'g' = h';$ 
\item[$(2)$] $f' \sim f;$ 
\item[$(3)$] $(g')\C_{\mathrm{fwd}} = (g)\C_{\mathrm{fwd}}  + K$, $(g')\C_{\mathrm{open}} = (g)\C_{\mathrm{open}}$, $(g')\C_n = (g)\C_n$ for all $n \in \Z_+;$
\item[$(4)$] $(h')\C_{\mathrm{fwd}} = (h)\C_{\mathrm{fwd}}  + K$, $(h')\C_{\mathrm{open}} = (h)\C_{\mathrm{open}}$, $(h')\C_n = (h)\C_n$ for all $n \in \Z_+.$
\end{enumerate}

Furthermore, the maps $f'$, $g'$, and $h'$ can be chosen so that there are infinite cycles $\, \Gamma_{f'}$, $\, \Gamma_{g'}$, and $\, \Gamma_{h'}$ of $f'$, $g'$, and $h'$, respectively, such that $\, \Gamma_{f'} \cap  \Gamma_{g'} \cap  \Gamma_{h'}$ is infinite.
\end{lemma}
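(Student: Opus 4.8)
The plan is to mimic the proof of Lemma~\ref{fwd f} almost verbatim, with the roles of $f$ and $g$ interchanged, adjusting the explicit formulas so that it is $\bar{g}$ (rather than $\bar{f}$) that acquires a forward cycle while $\bar{f}\sim f$ is preserved. First I would treat the special case $K=1$. Writing $\Sigma=\{\alpha_i\}_{i\in\Z_+}\cup\Delta$ with the union disjoint and $|\Delta|=\aleph_0$, I would choose the $\alpha_i$ so that none is initial in $\Gamma_f$ or $\Gamma_h$ (when these are forward cycles) and so that $\alpha_j\notin\{(\alpha_i)f^{-1},(\alpha_i)h^{-1}\}$ for all $i,j$; set $\beta_i=(\alpha_i)f^{-1}$ and $\zeta_i=(\alpha_i)h^{-1}$. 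On $\Omega'=\Omega\,\dot\cup\,\{\theta_i\}_{i\in\Z_+}$ I would define $\bar{g}$ to send $\theta_i\mapsto\theta_{i+1}$ for $i\in\Z_+$ and $\lambda\mapsto(\lambda)g$ for $\lambda\in\Omega$, so that $\{\theta_i\}_{i\in\Z_+}$ is a new forward cycle of $\bar{g}$ and all its other cycles are those of $g$. I would define $\bar{f}$ by the mirror image of the $\bar{g}$ from Lemma~\ref{fwd f}: $\beta_i\mapsto\theta_{2i-1}$, $\theta_{2i-1}\mapsto\theta_{2i}$, $\theta_{2i}\mapsto\alpha_i$ for $i\in\Z_+$, and $\lambda\mapsto(\lambda)f$ otherwise. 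The picture $\cdots\rightarrow\beta_i\rightarrow\theta_{2i-1}\rightarrow\theta_{2i}\rightarrow\alpha_i\rightarrow\cdots$ shows that $\Gamma_f\cup\{\theta_i\}_{i\in\Z_+}$ is a single infinite cycle of $\bar{f}$ of the same type as $\Gamma_f$, so $\bar{f}\sim f$ by Lemma~\ref{pass}.

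Next I would compute $\bar{h}=\bar{f}\bar{g}$ directly from the two formulas. Using $\zeta_i=(\beta_i)f^{-1}$ (equivalently $(\zeta_i)f=\beta_i$) and tracking where the $\theta$'s go, one finds $\zeta_i\mapsto\theta_{2i-1}$, $\theta_{2i-1}\mapsto\theta_{2i+1}$, $\theta_{2i}\mapsto\alpha_i$ for $i\in\Z_+$, and $\lambda\mapsto(\lambda)h$ otherwise (up to minor re-indexing, exactly the mirror of the $\bar{h}$ in Lemma~\ref{fwd f}). Thus $\{\theta_{2i-1}\}_{i\in\Z_+}$ is a new forward cycle of $\bar{h}$, the set $\Gamma_h\cup\{\theta_{2i}:i\in\Z_+\}$ is an infinite cycle of $\bar{h}$ of the same type as $\Gamma_h$, and all remaining cycles of $\bar{h}$ coincide with those of $h$. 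Hence, for any bijection $a:\Omega\rightarrow\Omega'$, the maps $f'=a\bar{f}a^{-1}$, $g'=a\bar{g}a^{-1}$, $h'=a\bar{h}a^{-1}$ satisfy $f'g'=h'$, $f'\sim f$, and, by Lemma~\ref{pass}, the cycle-count conditions $(3)$ and $(4)$ with $K=1$. For the ``furthermore'' clause one takes $\Gamma_{f'}$, $\Gamma_{g'}$, $\Gamma_{h'}$ to be the images under $a$ of the infinite cycles containing $\Delta$: since $\Delta\subseteq\Gamma_f\cap\Gamma_g\cap\Gamma_h$ survives untouched, the intersection $\Gamma_{f'}\cap\Gamma_{g'}\cap\Gamma_{h'}$ is infinite.

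Finally, the general case $1\leq K\leq\aleph_0$ is obtained exactly as in the last paragraph of the proof of Lemma~\ref{open f}: partition $\Sigma=(\bigcup_{n=1}^{K}\Sigma_n)\cup\Delta$ into disjoint countably infinite pieces, choosing $\bigcup_{n=1}^{K}\Sigma_n$ to avoid initial elements of $\Gamma_f,\Gamma_h$ and $f^{-1},h^{-1}$-neighbours as above, perform the $K=1$ construction on each $\Sigma_n$ independently with its own family of new points $\{\theta_i^n\}_{i\in\Z_+}$, and take the disjoint union. Each $\Sigma_n$ contributes one new forward cycle to $g'$ and one to $h'$ and leaves all other cycles of $f$, $g$, $h$ intact, giving $(g')\C_{\mathrm{fwd}}=(g)\C_{\mathrm{fwd}}+K$ and $(h')\C_{\mathrm{fwd}}=(h)\C_{\mathrm{fwd}}+K$ with all other cycle counts unchanged, while $\Delta$ again supplies the common infinite cycle for the ``furthermore'' clause. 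I do not expect any genuine obstacle here; the only delicate point is getting the explicit formula for $\bar{f}$ right so that $\Gamma_f$ together with the inserted $\theta$'s forms a \emph{single} infinite cycle (rather than fragmenting, or producing an unwanted forward cycle), and then verifying the composition $\bar{f}\bar{g}$ by a careful but routine case check — precisely the kind of bookkeeping already carried out in the preceding lemmas.
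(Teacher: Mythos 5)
Your construction goes wrong at the computation of $\bar{h}=\bar{f}\bar{g}$, and as a result property $(4)$ is not established. The identity you invoke, $\zeta_i=(\beta_i)f^{-1}$, is false in general: from $h=fg$ one gets $\zeta_i=(\alpha_i)h^{-1}=(\alpha_i)g^{-1}f^{-1}$, which is the $f$-preimage of $(\alpha_i)g^{-1}$, not of $\beta_i=(\alpha_i)f^{-1}$. Computing $\bar{h}$ honestly from your formulas (note $\bar{g}$ agrees with $g$ on all of $\Omega$), one finds that $\bar{h}$ agrees with $h$ on $\Omega\setminus\{\beta_i\}_{i\in\Z_+}$, while $\beta_i\mapsto\theta_{2i}$, $\theta_{2i-1}\mapsto\theta_{2i+1}$, and $\theta_{2i}\mapsto(\alpha_i)g=(\beta_i)h$. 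So besides the new forward cycle $\{\theta_{2i-1}\}_{i\in\Z_+}$, the only effect on $h$ is that $\theta_{2i}$ is spliced into the $h$-cycle containing $\beta_i$, between $\beta_i$ and $(\beta_i)h$ --- not into $\Gamma_h$. But $\beta_i=(\alpha_i)f^{-1}$ is only known to lie in $\Gamma_f$; its position in the cycle decomposition of $h$ is completely uncontrolled, and it may lie in a finite cycle of $h$, which then grows by one element, destroying $(h')\C_n=(h)\C_n$. No choice of the $\alpha_i$ rescues this. For instance, take $\Omega=\{a_n,b_n,c_n: n\in\Z_+\}$, let $h$ have the forward cycle $a_1\to a_2\to\cdots$ and the $2$-cycles $\{b_n,c_n\}$, and let $f$ be the single forward cycle $a_1\to b_1\to a_2\to c_1\to b_2\to a_3\to c_2\to b_3\to a_4\to\cdots$; the map $g$ forced on $(\Omega)f$ by $fg=h$, extended by $(a_1)g=a_1$, is a permutation with one open cycle, $\Sigma=\Gamma_f\cap\Gamma_g\cap\Gamma_h=\{a_n: n\geq 2\}$ is infinite, and yet $(a_n)f^{-1}=b_{n-1}$ lies in a $2$-cycle of $h$ for every $a_n\in\Sigma$, so your $\bar{h}$ necessarily converts some $2$-cycles of $h$ into $3$-cycles.

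The repair is precisely the asymmetry the paper exploits: since $\bar{g}$ is the shift on the $\theta$'s and acts second, the new points must be spliced into the $f$-cycle \emph{after} each $\alpha_i$, using forward images rather than preimages. That is, define $\bar{f}$ by $\alpha_i\mapsto\theta_{2i-1}$, $\theta_{2i-1}\mapsto\theta_{2i}$, $\theta_{2i}\mapsto(\alpha_i)f$, and otherwise $\lambda\mapsto(\lambda)f$ (choosing the $\alpha_i$ with $\alpha_j\notin\{(\alpha_i)f,(\alpha_i)h\}$; no condition about initial elements is then needed, since no inverses are used). Then $\bar{h}=\bar{f}\bar{g}$ differs from $h$ only at the points $\alpha_i\in\Gamma_h$ and on the $\theta$'s: $\alpha_i\mapsto\theta_{2i}\mapsto(\alpha_i)h$ and $\theta_{2i-1}\mapsto\theta_{2i+1}$, so $\Gamma_h\cup\{\theta_{2i}\}_{i\in\Z_+}$ is one infinite cycle of the same type as $\Gamma_h$, $\{\theta_{2i-1}\}_{i\in\Z_+}$ is a new forward cycle, and every other cycle of $h$ is untouched, which gives $(4)$; $(2)$ and $(3)$ hold as in your argument, and your reduction of general $K$ to $K=1$ and the treatment of the ``furthermore'' clause via $\Delta$ then go through unchanged.
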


\begin{proof}
As usual, let us begin by proving the lemma in the case where $K = 1$. 

Write $\Sigma = \{\alpha_i\}_{i\in \Z_+} \cup \Delta$, where the union is disjoint and $|\Delta| = \aleph_0$. We may choose these elements so that for all $\alpha_i$ and $\alpha_j$ we have $\alpha_j \notin \{(\alpha_i)f, (\alpha_i)h\}$. For each $i \in \Z_+$, write $\gamma_i = (\alpha_i)f$ and $\eta_i  = (\alpha_i)h$. Also, let $\Omega' = \Omega \dot{\cup} \{\theta_i\}_{i\in \Z_+}$. We shall next define injective endomaps $\bar{f}$, $\bar{g}$, and $\bar{h}$ on $\Omega'$.

Define $\bar{f}$ to act by 
$$\left\{ \begin{array}{ll}
\alpha_i \mapsto \theta_{2i-1} & \textrm{for } i\in \Z_+\\
\theta_{2i-1} \mapsto \theta_{2i} & \textrm{for } i\in \Z_+\\
\theta_{2i} \mapsto \gamma_{i} & \textrm{for } i\in \Z_+\\
\lambda \mapsto (\lambda)f & \textrm{otherwise}
\end{array}\right.,$$ and define $\bar{g}$ to act by 
$$\left\{ \begin{array}{ll}
\theta_i \mapsto \theta_{i+1} & \textrm{for } i\in \Z_+\\
\lambda \mapsto (\lambda)g & \textrm{for } \lambda \in \Omega
\end{array}\right..$$

For each $i \in \Z_+$, the action of $\bar{f}$ on the elements $\alpha_i$, $\gamma_i$, and $\theta_i$ can be visualized as follows: $$\cdots \rightarrow \alpha_i \rightarrow \theta_{2i-1} \rightarrow \theta_{2i} \rightarrow \gamma_i \rightarrow \cdots .$$ Thus, $\Gamma_f \cup \{\theta_i\}_{i\in \Z_+}$ is an infinite cycle under $\bar{f}$, and otherwise $\bar{f}$ has the same cycles as $f$. It follows that for any bijection $a : \Omega \rightarrow \Omega'$, we have $f \sim a\bar{f}a^{-1}$. Also, $\{\theta_i\}_{i\in \Z_+}$ is a forward cycle under $\bar{g}$, and otherwise $\bar{g}$ has the same cycles as $g$. Therefore, for any bijection $a : \Omega \rightarrow \Omega'$, we have $(a\bar{g}a^{-1})\C_{\mathrm{fwd}} = (g)\C_{\mathrm{fwd}} + 1$, $(a\bar{g}a^{-1})\C_{\mathrm{open}} = (g)\C_{\mathrm{open}}$, and $(a\bar{g}a^{-1})\C_n = (g)\C_n$ for all $n \in \Z_+.$

Setting $\bar{h} = \bar{f}\bar{g}$, we see that $\bar{h}$ acts by 
$$\left\{ \begin{array}{ll}
\alpha_i \mapsto \theta_{2i} & \textrm{for } i\in \Z_+\\
\theta_{2i-1} \mapsto \theta_{2i+1} & \textrm{for } i\in \Z_+\\
\theta_{2i} \mapsto \eta_{i} & \textrm{for } i\in \Z_+\\
\lambda \mapsto (\lambda)h & \textrm{otherwise}
\end{array}\right..$$ For each $i \in \Z_+$, the action of $\bar{h}$ on the elements $\alpha_i$, $\eta_i$, and $\theta_{2i}$ can be visualized as follows: $$\cdots \rightarrow \alpha_i \rightarrow \theta_{2i} \rightarrow \eta_i \rightarrow \cdots .$$ Thus, $\Gamma_h \cup \{\theta_{2i}\}_{i\in \Z}$ is an infinite cycle under $\bar{h}$, $\{\theta_{2i-1}\}_{i\in \Z_+}$ is a forward cycle under $\bar{h}$, and otherwise $\bar{h}$ has the same cycles as $h$. It follows that for any bijection $a : \Omega \rightarrow \Omega'$, we have $(a\bar{h}a^{-1})\C_{\mathrm{fwd}} = (h)\C_{\mathrm{fwd}} + 1$, $(a\bar{h}a^{-1})\C_{\mathrm{open}} = (h)\C_{\mathrm{open}} $, and $(a\bar{h}a^{-1})\C_n = (h)\C_n$ for all $n \in \Z_+.$ Therefore, setting $f' = a\bar{f}a^{-1}$, $g' = a\bar{g}a^{-1}$, and $h' = a\bar{h}a^{-1}$, we have proved the lemma in the desired special case.

The general case and the final claim can be obtained by the same argument as in the last paragraph of the proof of Lemma~\ref{open f}, so we omit it here.
\end{proof}

\section{The main constructions} \label{construction section}

We now turn to providing our ``anchor" constructions of maps $f$ and $g$, such that both have just one infinite cycle, while $fg$ has an arbitrary number of open cycles. To prove Theorem~\ref{comp} we shall then use the lemmas of the previous three sections to add other sorts of cycles to these maps.
 
\begin{lemma} \label{open h}
Let $K$ be a cardinal such that $\, 0 \leq K \leq \aleph_0$. Then there exist maps $f, g, h \in \Inj(\Omega)$ satisfying:
\begin{enumerate}
\item[$(1)$] $fg = h;$ 
\item[$(2)$] $(f)\C_{\mathrm{fwd}} = 1$, $(f)\C_{\mathrm{open}} = 0$, $(f)\C_n = 0$ for all $n \in \Z_+;$
\item[$(3)$]$(g)\C_{\mathrm{fwd}} = 1$, $(g)\C_{\mathrm{open}} = 0$, $(g)\C_n = 0$ for all $n \in \Z_+;$
\item[$(4)$] $(h)\C_{\mathrm{fwd}} = 2$, $(h)\C_{\mathrm{open}} = K$, $(h)\C_n = 0$ for all $n \in \Z_+.$
\end{enumerate}
\end{lemma}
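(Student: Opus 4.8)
The plan is to write down explicit maps $f,g,h$ realizing the prescribed cycle data. The case $K = 0$ is handled at once: write $\Omega = \{a_j : j \in \N\} \cup \{b_j : j \in \N\}$ and let $f = g$ be the single forward cycle sending $a_j \mapsto b_j$ and $b_j \mapsto a_{j+1}$; then $h = fg = f^2$ sends $a_j \mapsto a_{j+1}$ and $b_j \mapsto b_{j+1}$, so $f$ and $g$ satisfy (2)--(3) and $h$ satisfies (4). From now on assume $1 \le K \le \aleph_0$, fix an index set $I$ with $|I| = K$, and (using only that $\Omega$ is countably infinite) write
$$\Omega = \{a_j : j \in \N\} \cup \{b_j : j \in \N\} \cup \{c^k_i : k \in I,\ i \in \Z\}.$$
Define $h \in \Inj(\Omega)$ to act by $a_i \mapsto a_{i+1}$, $b_i \mapsto b_{i+1}$, $c^k_i \mapsto c^k_{i+1}$; by Lemma~\ref{fwd cycles} this has exactly two forward cycles ($\{a_j\}_j$ and $\{b_j\}_j$), exactly $K$ open cycles (the sets $\{c^k_i\}_{i\in\Z}$), and no finite cycles, so (4) holds. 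The task is therefore to factor $h = fg$ with $f$ and $g$ each consisting of a single forward cycle.

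To construct $f$, fix a bijection $\mu \colon \N \to I \times \N$ and write $\mu(j) = (k_j, i_j)$. Let $f$ be the injection obtained by concatenating, for $j = 0, 1, 2, \dots$, the blocks
$$a_j \,\mapsto\, b_j \,\mapsto\, c^{k_j}_{i_j} \,\mapsto\, c^{k_j}_{-i_j-1} \,\mapsto\, a_{j+1}.$$
Since $\mu$ is a bijection, as $j$ varies the element $c^{k_j}_{i_j}$ runs exactly once over all $c^k_i$ with $i \ge 0$ and $c^{k_j}_{-i_j-1}$ runs exactly once over all $c^k_i$ with $i \le -1$; hence $f$ is a well-defined injective endomap with $(\Omega)f = \Omega \setminus \{a_0\}$ whose sole cycle is the forward cycle $a_0 \mapsto b_0 \mapsto c^{k_0}_{i_0} \mapsto \cdots$, so $f$ satisfies (2). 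Now $g$ is forced on $(\Omega)f$ by the requirement $((\omega)f)g = (\omega)h$ (well defined since $f$ is injective), and we are free to set $(a_0)g = b_0$; this guarantees $fg = h$. Unwinding the definitions, $g$ acts by $a_0 \mapsto b_0$, $a_{j+1} \mapsto c^{k_j}_{-i_j}$, $b_j \mapsto a_{j+1}$, $c^{k_j}_{i_j} \mapsto b_{j+1}$, and $c^{k_j}_{-i_j-1} \mapsto c^{k_j}_{i_j+1}$, for $j \in \N$.

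It remains to verify that $g$ is also a single forward cycle. A short check gives $(\Omega)g = \Omega \setminus \{a_0\}$ and that $g$ is injective, and then one follows the $g$-orbit of $a_0$:
$$a_0 \to b_0 \to a_1 \to c^{k_0}_{-i_0} \to c^{k_0}_{i_0} \to b_1 \to a_2 \to c^{k_1}_{-i_1} \to \cdots$$
(when $i_j = 0$ the entries $c^{k_j}_{-i_j}$ and $c^{k_j}_{i_j}$ coincide). Because $\mu$ is a bijection, this orbit meets every $a_j$, every $b_j$, and, via the pairs $(k_j, i_j)$, every $c^k_i$ exactly once; so $g$ is a single forward cycle with initial element $a_0$, which is (3). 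Combined with $fg = h$, this proves the lemma.

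The only point requiring care is the design of the ``routing'' in $f$: it is easy to string $\Omega$ into one forward cycle, but an arbitrary choice typically breaks $g$ into several cycles. The block $a_j \mapsto b_j \mapsto c^{k_j}_{i_j} \mapsto c^{k_j}_{-i_j-1} \mapsto a_{j+1}$ together with the in-family pairing $i \leftrightarrow -i-1$ is arranged precisely so that both $f$ and $g$ come out as single forward cycles; after that everything is routine bookkeeping, and the case $K = \aleph_0$ requires nothing extra beyond the countability of $I \times \N$.
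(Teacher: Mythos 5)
Your construction is correct, and it reaches the goal by a somewhat different route than the paper. The paper also argues by explicit construction, but it defines \emph{both} $f$ and $g$ by closed formulas on a grid $\{0,1,\dots,K\}\times\Z$ (with a special wrap-around rule in the last rows when $K$ is finite), computes $h=fg$ row by row, and verifies that $f$ and $g$ each consist of a single forward cycle by inspecting the pictures in the Appendix. You instead fix $h$ first in its obvious normal form (two forward cycles $\{a_j\}$, $\{b_j\}$ and $K$ open cycles $\{c^k_i\}_{i\in\Z}$), thread a single forward cycle $f$ through $\Omega$ in blocks $a_j\mapsto b_j\mapsto c^{k_j}_{i_j}\mapsto c^{k_j}_{-i_j-1}\mapsto a_{j+1}$ governed by an enumeration $\mu:\N\to I\times\N$, and then let $g$ be \emph{forced} by the requirement $((\omega)f)g=(\omega)h$ on $(\Omega)f$, with the one free value $(a_0)g=b_0$ chosen so that the $g$-orbit of $a_0$ sweeps out all of $\Omega$. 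The payoff of your organization is uniformity: the single enumeration handles every $1\le K\le\aleph_0$ with no finite-versus-infinite case split in the formulas and no appendix figures, and the identity $fg=h$ holds by construction rather than by a separate computation; the only substantive verifications are the injectivity bookkeeping for $g$ and the orbit trace, both of which you carry out correctly (including the coincidence $c^{k_j}_{-i_j}=c^{k_j}_{i_j}$ when $i_j=0$, and the observation that $a_0$ is also the unique element outside $(\Omega)g$, so Lemma~\ref{fwd cycles} gives exactly one forward cycle). The paper's version, by contrast, keeps $f$ and $g$ visibly symmetric and reuses the same grid picture in the proof of the companion Lemma (its Lemma on a forward times an open cycle), which is what its more elaborate coordinates buy. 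Your $K=0$ case is the paper's shift-squared argument up to relabeling.
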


\begin{proof}
If $K = 0$, then we can identify $\Omega$ with $\N$, and take both $f$ and $g$ to be the right shift map, defined by $i \rightarrow i + 1$ $(i \in \N)$. In this case, $\N$ is a forward cycle under both $f$ and $g$, and $(i)fg = i + 2$ for all $i \in \N$. Hence the cycle decomposition of $h = fg$ consists for the two forward cycles $\{2i : i \in \N\}$ and $\{2i + 1 : i \in \N\}$, as desired.

Let us therefore assume that $K \geq 1$ and now identify $\Omega$ with $\{0,1, \dots, K\} \times \Z$ (or with $\N \times \Z$, if $K$ is infinite). We define $f \in \Inj(\Omega)$ to act by 
$$\left\{ \begin{array}{ll}
(0,j) \mapsto (0,-j) & \textrm{for } j > 0\\
(0,j) \mapsto (1,j) & \textrm{for } j \leq 0\\
(1,j) \mapsto (0,j+1) & \textrm{for } j \geq 0\\
(i,j) \mapsto (i-1,j+2) & \textrm{for } i>1, j \geq 0\\
(i,-1) \mapsto (i,1) & \textrm{for } i>0\\
(i,j) \mapsto (i+1,j+2) & \textrm{for } K> i > 0, j<-1\\
(K,j) \mapsto (K,-j) & \textrm{for } j<-1 \textrm{ (if } $K$ \textrm{ is finite)}\\
\end{array}\right.,$$ and we define $g \in \Inj(\Omega)$ to act by
$$\left\{ \begin{array}{ll}
(0,j) \mapsto (1,j) & \textrm{for } j > 0\\
(0,j) \mapsto (0,-j+1) & \textrm{for } j \leq 0\\
(i,1) \mapsto (i,0) & \textrm{for } i>0\\
(i,j) \mapsto (i-1,j-1) & \textrm{for } i>0, j\leq 0\\
(i,j) \mapsto (i+1,j-1) & \textrm{for } K> i>0, j > 1\\
(K,j) \mapsto (K,-j+1) & \textrm{for } j>1 \textrm{ (if } $K$ \textrm{ is finite)}\\
\end{array}\right..$$ Then, setting $fg = h$, it is routine to check that $h$ acts by 
$$\left\{ \begin{array}{ll}
(0,j) \mapsto (0,j+1) & \textrm{for } j > 0\\
(0,j) \mapsto (0,j-1) & \textrm{for } j \leq 0\\
(i,j) \mapsto (i,j+1) & \textrm{for } i>0, j \in \Z\\
\end{array}\right..$$ 

Thus, for each $i > 0,$ $\{(i,j) : j \in \Z\}$ is an open cycle under $h$, and $\{(0,j) : j \leq 0\}$ and $\{(0,j) : j > 0\}$ are forward cycles under $h$, showing that $h$ has the desired form. It remains to verify that $f$ and $g$, as defined above, both have exactly one forward cycle and no other cycles. This is immediate once $f$ and $g$ are represented as graphs - see Figures 1 and 2 in the Appendix below. In the graphs one can see that both $f$ and $g$ have just one (forward) cycle, starting at the point $(0,0)$. These graphs represent the maps in the case where $K$ is finite. If $K$ were infinite, then the graphs of $f$ and $g$ would be just like those in Figures 1 and 2, except without the last two rows.
\end{proof}

The next lemma is an analogue of the previous one, except now $g$ will have an open cycle rather than a forward cycle.

\begin{lemma} \label{open h 2}
Let $K$ be a cardinal such that $\, 0 \leq K \leq \aleph_0$. Then there exist maps $f, g, h \in \Inj(\Omega)$ satisfying:
\begin{enumerate}
\item[$(1)$] $fg = h;$ 
\item[$(2)$] $(f)\C_{\mathrm{fwd}} = 1$, $(f)\C_{\mathrm{open}} = 0$, $(f)\C_n = 0$ for all $n \in \Z_+;$
\item[$(3)$]$(g)\C_{\mathrm{fwd}} = 0$, $(g)\C_{\mathrm{open}} = 1$, $(g)\C_n = 0$ for all $n \in \Z_+;$
\item[$(4)$] $(h)\C_{\mathrm{fwd}} = 1$, $(h)\C_{\mathrm{open}} = K$, $(h)\C_n = 0$ for all $n \in \Z_+.$
\end{enumerate}
\end{lemma}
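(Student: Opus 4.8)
The plan is to imitate the proof of Lemma~\ref{open h}, rewiring the rows so that $g$ comes out as a single \emph{open} cycle rather than a forward cycle. Note that one cannot deduce this lemma from Lemma~\ref{open h} by means of the lemmas of Sections~\ref{open section} and~\ref{fwd section}, since those only \emph{add} cycles; a fresh ``anchor'' construction is needed. Once $g$ is arranged to be a bijection, $h=fg$ automatically has exactly one forward cycle, by Lemmas~\ref{fwd cycles} and~\ref{coimage}, so the bookkeeping about forward cycles of $h$ takes care of itself. As in Lemma~\ref{open h}, I would dispose of the case $K=0$ first: identify $\Omega$ with $\Z$, take $g$ to be a shift such as $j\mapsto j-1$ (a single open cycle, and a bijection, as required), and take $f$ to be an explicit injection of $\Z$ onto $\Z$ minus a point that forms a single forward cycle --- for instance $j\mapsto -j-1$ for $j\ge 0$ and $j\mapsto -j$ for $j<0$ --- chosen so that a direct computation shows $h=fg$ is again a single forward cycle with no other cycles.

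For $K\ge 1$, identify $\Omega$ with $\{0,1,\dots,K\}\times\Z$ (or with $\N\times\Z$ when $K$ is infinite), just as in Lemma~\ref{open h}, and write down piecewise formulas for $f$ and $g$ of the same flavour as those in that lemma, designed to meet three goals: (i) $f$ threads through all the rows as one forward cycle with initial element $(0,0)$ and has no other cycles; (ii) $g$ threads through all the rows as one bi-infinite cycle and has no other cycles; (iii) the product $h=fg$ acts as the shift $(i,j)\mapsto(i,j+1)$ on each row $i>0$ --- making those rows the $K$ open cycles of $h$ --- and acts as a single forward cycle on row $0$. One then reads off the cycle decomposition of $h=fg$ from the formulas and checks that it consists precisely of the $K$ open cycles $\{(i,j):j\in\Z\}$ for $1\le i\le K$, one forward cycle on row $0$, and nothing else. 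As in Lemma~\ref{open h}, the cycle structures of $f$ and $g$ themselves are most transparent from their graphs, which I would present as additional figures in the Appendix.

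The hard part is purely the combinatorial design: choosing the piecewise rules so that (i)--(iii) hold at once. The delicate places are the ``turning points'' $(i,0)$ and $(i,\pm 1)$ where the snakes defining $f$ and $g$ reverse direction as they pass from one row to the next, together with the last row $(K,\cdot)$ in the finite case; there one must verify that no stray finite cycles appear in $f$, $g$, or $h$, that the distinguished cycle through $(0,0)$ is forward (not open) under both $f$ and $h$, and that the single cycle of $g$ is genuinely open. Once suitable formulas are fixed, the remaining verification is the same routine case analysis as in Lemma~\ref{open h}.
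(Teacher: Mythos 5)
Your $K=0$ construction is fine: with maps written on the right, $f\colon j\mapsto -j-1$ ($j\ge 0$), $j\mapsto -j$ ($j<0$) is a single forward cycle with initial element $0$, $g\colon j\mapsto j-1$ is a single open cycle, and $h=fg$ is a single forward cycle with initial element $-1$, so conditions (1)--(4) hold in that case. Your preliminary observation is also correct that once $g$ is a bijection, Lemmas~\ref{fwd cycles} and~\ref{coimage} force $h$ to have exactly one forward cycle (though its open and finite cycle counts still require inspection of the formulas).

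The genuine gap is that for $1\le K\le\aleph_0$ you never construct $f$ and $g$. The lemma is a pure existence statement whose entire content is the explicit pair of maps; your items (i)--(iii) are simply a restatement of what such maps must do, and you yourself identify ``choosing the piecewise rules so that (i)--(iii) hold at once'' as the hard part and then leave it undone. The delicate points you list -- the turning points $(i,0)$, $(i,\pm1)$, the last row when $K$ is finite, avoiding stray finite cycles, and making the distinguished cycles forward rather than open -- are exactly where such constructions fail if the wiring is off, so the existence claim for $K\ge 1$ cannot be taken on faith. Note also that your intended design differs from the paper's: you want row $0$ of $\{0,\dots,K\}\times\Z$ itself to be the forward cycle of $h$, whereas the paper adjoins an extra half-row $\{(-1,j):j\in\Z_+\}$ to carry the forward cycle of $h$, makes row $0$ an open cycle of $h$, reuses the Lemma~\ref{open h} formulas verbatim on the rows $i>0$, and treats $K=1$ by a separate, simpler construction. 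Your variant is plausibly realizable, but since no formulas are given and it does not coincide with a construction already verified elsewhere, the cases $1\le K\le \aleph_0$ remain unproved as written; to complete the argument you must exhibit the piecewise definitions of $f$ and $g$ and carry out the cycle-by-cycle verification, as the paper does (including its Figures 3 and 4).
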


\begin{proof}
First suppose that $K = 0$. Let us indentify $\Omega$ with $\Z$ and define
$$(i) f =\left\{ \begin{array}{ll}
-i & \textrm{for } i > 0\\
-i+1 & \textrm{for } i \leq 0\\
\end{array}\right.,$$ $(i) g = i+1$ $(i \in \Z)$. Then, setting $h = fg$, we have 
$$(i) h =\left\{ \begin{array}{cl}
-i+1 & \textrm{for } i > 0\\
-i+2 & \textrm{for } i \leq 0\\
\end{array}\right..$$ The action of $f$ on $\Omega$ can be visualized as follows: $$0 \rightarrow 1 \rightarrow -1 \rightarrow 2 \rightarrow -2 \rightarrow \cdots.$$ Also, action of $h$ on $\Omega$ can be visualized as follows:
$$1 \rightarrow \fbox{$0$} \rightarrow 2 \rightarrow \fbox{$-1$} \rightarrow 3 \rightarrow \fbox{$-2$} \rightarrow \cdots.$$ Thus, $f$, $g$, and $h$ have the desired forms.

Now suppose that $K = 1$. Let us write $\Omega = \{\alpha_i : i \in \Z\} \cup \{\beta_i : i \in \Z\}$, define $f$ to act by
$$\left\{ \begin{array}{ll}
\beta_i \mapsto \alpha_{i+1} & \textrm{for } i \in \Z\\
\alpha_i \mapsto \beta_{-i} & \textrm{for } i \leq 0 \\
\alpha_i \mapsto \beta_{-i-1} & \textrm{for } i > 0\\
\end{array}\right.,$$ and define $g$ to act by 
$$\left\{ \begin{array}{ll}
\beta_i \mapsto \alpha_{i+1} & \textrm{for } i \in \Z\\
\alpha_i \mapsto \beta_{i} & \textrm{for } i \in \Z\\
\end{array}\right..$$ The action of $f$ on $\Omega$ can be visualized as follows: $$\beta_{-1} \rightarrow \alpha_{0} \rightarrow \beta_{0} \rightarrow \alpha_{1} \rightarrow$$ $$\beta_{-2} \rightarrow \alpha_{-1} \rightarrow \beta_{1} \rightarrow \alpha_{2} \rightarrow $$ $$\vdots$$
$$\beta_{-i} \rightarrow \alpha_{-i+1} \rightarrow \beta_{i-1} \rightarrow \alpha_{i} \rightarrow \cdots.$$ Hence, $f$ has exactly one forward cycle and no other cycles. It is also clear that $g$ has exactly one open cycle and no other cycles. Setting $h = fg$, see that $h$ acts by 
$$\left\{ \begin{array}{ll}
\beta_i \mapsto \beta_{i+1} & \textrm{for } i \in \Z\\
\alpha_i \mapsto \alpha_{-i+1} & \textrm{for } i \leq 0 \\
\alpha_i \mapsto \alpha_{-i} & \textrm{for } i > 0\\
\end{array}\right..$$ The action of $h$ on $\{\alpha_i : i \in \Z\}$ can be visualized as follows: $$\alpha_{0} \rightarrow \alpha_{1} \rightarrow \alpha_{-1} \rightarrow \alpha_{2} \rightarrow \alpha_{-2} \rightarrow \cdots.$$ Thus, $\{\alpha_i : i \in \Z\}$ is a forward cycle under $h$ and $\{\beta_i : i \in \Z\}$ is an open cycle under $h$, giving $h$ the desired form.

Let us therefore assume that $K \geq 2$. This time write $$\Omega = (\{i \in \N : i < K\} \times \Z) \cup \{(-1,j) : j \in \Z_+\} \subseteq \Z \times \Z.$$ We define $f \in \Inj(\Omega)$ to act by 
$$\left\{ \begin{array}{ll}
(-1,2n) \mapsto (0,-2n+1) & \textrm{for } n>0\\
(-1,2n-1) \mapsto (0,-2n) & \textrm{for } n>0\\
(0,2n) \mapsto (-1,2n-1) & \textrm{for } n>0\\
(0,2n-1) \mapsto (-1,2n) & \textrm{for } n>0\\
(0,j) \mapsto (1,j) & \textrm{for } j \leq 0\\
(1,j) \mapsto (0,j+1) & \textrm{for } j \geq 0\\
(i,j) \mapsto (i-1,j+2) & \textrm{for } i>1, j \geq 0\\
(i,-1) \mapsto (i,1) & \textrm{for } i>0\\
(i,j) \mapsto (i+1,j+2) & \textrm{for } K-1> i > 0, j<-1\\
(K-1,j) \mapsto (K-1,-j) & \textrm{for } j<-1 \textrm{ (if } $K$ \textrm{ is finite)}\\
\end{array}\right.,$$ and we define $g \in \Inj(\Omega)$ to act by
$$\left\{ \begin{array}{ll}
(-1,2n) \mapsto (0,2n-2) & \textrm{for } n>0\\
(-1,2n-1) \mapsto (0,2n-1) & \textrm{for } n>0\\
(0,j) \mapsto (1,j) & \textrm{for } j > 0\\
(0,0) \mapsto (-1,1) & \\
(0,2n) \mapsto (-1,-2n) & \textrm{for } n<0\\
(0,2n+1) \mapsto (-1,-2n+1) & \textrm{for } n<0\\
(i,1) \mapsto (i,0) & \textrm{for } i>0\\
(i,j) \mapsto (i-1,j-1) & \textrm{for } i>0, j\leq 0\\
(i,j) \mapsto (i+1,j-1) & \textrm{for } K-1> i>0, j > 1\\
(K-1,j) \mapsto (K-1,-j+1) & \textrm{for } j>1 \textrm{ (if } $K$ \textrm{ is finite)}\\
\end{array}\right..$$ We note that on elements $(i,j)$ where $i > 0$, $f$ and $g$ are defined exactly as in Lemma~\ref{open h}. Setting $fg = h$, we see that $h$ acts by
$$\left\{ \begin{array}{ll}
(-1,j) \mapsto (-1,j+1) & \textrm{for } j>0\\
(0,j) \mapsto (0,j-1) & \textrm{for } j \in \Z\\
(i,j) \mapsto (i,j+1) & \textrm{for } i>0, j \in \Z\\
\end{array}\right..$$

Thus, for each $i \geq 0,$ $\{(i,j) : j \in \Z\}$ is an open cycle under $h$, and $\{(-1,j) : j \in \Z_+\}$ is a forward cycle under $h$, showing that $h$ has the desired form. It remains to verify that $f$ and $g$ have appropriate shapes. As in the previous lemma, this is immediate once $f$ and $g$ are represented as graphs -- see Figures 3 and 4 in the Appendix below. In the graphs one can see that $f$ has just a forward cycle, starting at the point $(0,0)$, and that $g$ has just an open cycle. These graphs represent the maps in the case where $K$ is finite. If $K$ were infinite, then the graphs of $f$ and $g$ would be the same, except without the last two rows.
\end{proof}

It might appear that the $f$ and $g$ above are defined in a manner that is more complicated than necessary, producing the crossed arrows in the graphs. We have chosen these maps this way in order to give $h$ a very simple form and to reuse as much of the construction from Lemma~\ref{open h} as possible.

The roles of $f$ and $g$ in the previous lemma are interchangeable, as the next corollary shows.

\begin{corollary} \label{open h 2.5}
Let $K$ be a cardinal such that $\, 0 \leq K \leq \aleph_0$. Then there exist maps $f, g, h \in \Inj(\Omega)$ satisfying:
\begin{enumerate}
\item[$(1)$] $fg = h;$ 
\item[$(2)$] $(f)\C_{\mathrm{fwd}} = 0$, $(f)\C_{\mathrm{open}} = 1$, $(f)\C_n = 0$ for all $n \in \Z_+;$
\item[$(3)$]$(g)\C_{\mathrm{fwd}} = 1$, $(g)\C_{\mathrm{open}} = 0$, $(g)\C_n = 0$ for all $n \in \Z_+;$
\item[$(4)$] $(h)\C_{\mathrm{fwd}} = 1$, $(h)\C_{\mathrm{open}} = K$, $(h)\C_n = 0$ for all $n \in \Z_+.$
\end{enumerate}
\end{corollary}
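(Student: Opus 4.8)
The plan is to deduce this corollary directly from Lemma~\ref{open h 2} by interchanging the two factors, exploiting the fact that the map playing the role of $g$ in that lemma is automatically a permutation. Concretely, I would first apply Lemma~\ref{open h 2} with the same cardinal $K$ to obtain maps $f_0, g_0, h_0 \in \Inj(\Omega)$ with $f_0 g_0 = h_0$, where $f_0$ has a single cycle, which is forward, $g_0$ has a single cycle, which is open, and $h_0$ has exactly one forward cycle, exactly $K$ open cycles, and no finite cycles.

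The observation that makes the swap legitimate is that $g_0 \in \Sym(\Omega)$. Indeed, $(g_0)\C_{\mathrm{fwd}} = 0$, so by Lemma~\ref{fwd cycles} we have $\Omega \setminus (\Omega)g_0 = \emptyset$; hence $g_0$ is surjective, and being injective it is a permutation. Therefore conjugation by $g_0$ is available, and from $h_0 = f_0 g_0$ we get $g_0 h_0 g_0^{-1} = g_0 f_0$.

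Now I would set $f = g_0$, $g = f_0$, and $h = g_0 f_0$. Then $fg = g_0 f_0 = h$, which is (1); properties (2) and (3) of the corollary are literally properties (3) and (2), respectively, of the triple supplied by Lemma~\ref{open h 2}. Finally, $h = g_0 f_0 = g_0 h_0 g_0^{-1}$, so $h \sim h_0$ by Proposition~\ref{decomposition} (equivalently, by Lemma~\ref{pass} applied with the bijection $g_0^{-1}$). Since $(\cdot)\C_{\mathrm{fwd}}$, $(\cdot)\C_{\mathrm{open}}$, and $(\cdot)\C_n$ are invariants of the equivalence relation $\sim$ (as recorded after Definition~\ref{cycle number}), the map $h$ has exactly one forward cycle, exactly $K$ open cycles, and no finite cycles, which is (4).

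I do not expect a genuine obstacle in this argument: the only points that need care are verifying that $g_0$ is a permutation (so that conjugating by it is meaningful) and keeping straight which of $f_0, g_0$ assumes which role after the interchange. Everything else is a direct transcription of the conclusions of Lemma~\ref{open h 2}.
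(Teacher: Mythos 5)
Your proposal is correct and is essentially identical to the paper's own proof: the paper likewise notes that the open-cycle factor from Lemma~\ref{open h 2} is a permutation, conjugates to obtain $g'h'(g')^{-1}=g'f'$, and swaps the roles of the two factors. Your extra remarks (why $g_0\in\Sym(\Omega)$ via Lemma~\ref{fwd cycles}, and invariance of the cycle counts under $\sim$) just make explicit steps the paper leaves implicit.
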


\begin{proof}
By Lemma~\ref{open h 2} there are maps $f', g', h' \in \Inj(\Omega)$ such that 
\begin{enumerate}
\item[$(1)$] $f'g' = h';$ 
\item[$(2)$] $(f')\C_{\mathrm{fwd}} = 1$, $(f')\C_{\mathrm{open}} = 0$, $(f')\C_n = 0$ for all $n \in \Z_+;$
\item[$(3)$]$(g')\C_{\mathrm{fwd}} = 0$, $(g')\C_{\mathrm{open}} = 1$, $(g')\C_n = 0$ for all $n \in \Z_+;$
\item[$(4)$] $(h')\C_{\mathrm{fwd}} = 1$, $(h')\C_{\mathrm{open}} = K$, $(h')\C_n = 0$ for all $n \in \Z_+.$
\end{enumerate}
Since $g' \in \Sym(\Omega)$, we have $g'h'(g')^{-1} = g'f'g'(g')^{-1} = g'f'$. Setting $h = g'h'(g')^{-1}$, $f = g'$, and $g = f'$, we obtain maps with the desired properties.
\end{proof}

\section{Putting it all together} \label{proof section}

We are now ready to give a complete proof of Theorem~\ref{comp}. For the ``if" direction, this proof essentially says that we can start with the maps defined in one of Lemma~\ref{open h}, Lemma~\ref{open h 2}, or Corollary~\ref{open h 2.5}, and then add to them enough finite, open, and forward cycles, using the lemmas of the three earlier sections, to construct any maps $f, g, h \in \Inj(\Omega)$ that each have at least one infinite cycle, and satisfy $fg = h$ and $|\Omega \setminus (\Omega)f| + |\Omega \setminus (\Omega)g| = |\Omega \setminus (\Omega)h|$. For the convenience of the reader, some of the remarks made in Section~\ref{plan section} will be repeated below.

\begin{proof}[Proof of Theorem~\ref{comp}]
Let $f, g, h \in \Inj(\Omega)$ be any three maps. Suppose that there exist permutations $a, b \in \Sym(\Omega)$ such that $h = afa^{-1}bgb^{-1}$. By Proposition~\ref{decomposition} and Lemma~\ref{fwd cycles}, we have $|\Omega \setminus (\Omega)f| = |\Omega \setminus (\Omega)afa^{-1}|$ and $|\Omega \setminus (\Omega)g| = |\Omega \setminus (\Omega)aga^{-1}|$. Hence, it follows from Lemma~\ref{coimage} that $|\Omega \setminus (\Omega)f| + |\Omega \setminus (\Omega)g| = |\Omega \setminus (\Omega)h|$.

Conversely, suppose that $f, g, h \in \Inj(\Omega)$ all have at least one infinite cycle and satisfy $|\Omega \setminus (\Omega)f| + |\Omega \setminus (\Omega)g| = |\Omega \setminus (\Omega)h|$.  By Lemma~\ref{prod-conj-equiv}, to show that there exist permutations $a, b \in \Sym(\Omega)$ such that $h = afa^{-1}bgb^{-1}$, it suffices to construct elements $f', g' \in  \Inj(\Omega)$ such that $f' \sim f$, $g' \sim g$, and $f'g' \sim h$.

We shall ignore the cases where $f,g \in \Sym(\Omega)$, in view of Theorem~\ref{droste theorem}. For the other cases, we shall construct $f'$ and $g'$ in stages, beginning with maps $f_1, g_1 \in \Inj(\Omega)$. Now, set $K = (h)\C_{\mathrm{open}}$. If $f, g \in \Inj(\Omega) \setminus \Sym(\Omega)$, then we let $f_1$ and $g_1$ be as in Lemma~\ref{open h}. If $f \in \Inj(\Omega) \setminus \Sym(\Omega)$ but $g \in \Sym(\Omega)$, then we let $f_1$ and $g_1$ be as in Lemma~\ref{open h 2}. Finally, if $f \in \Sym(\Omega)$ but $g \in \Inj(\Omega) \setminus \Sym(\Omega)$, then we let $f_1$ and $g_1$ be as in Corollary~\ref{open h 2.5}.

In all cases, both $f_1$ and $g_1$ have exactly one cycle in their cycle decompositions. Let us denote those cycles by $\Gamma_{f_1}$ and $\Gamma_{g_1}$, respectively. Since, in each case, every cycle in the cycle decomposition of $f_1g_1$ is infinite, letting $\Gamma_{f_1g_1}$ denote any such cycle, we have $|\Gamma_{f_1} \cap \Gamma_{g_1} \cap \Gamma_{f_1g_1}| = \aleph_0$. Since $(f_1g_1)\C_{\mathrm{open}} = (h)\C_{\mathrm{open}}$, by Lemma~\ref{fin h} we can find maps $f_2, g_2, h_2 \in \Inj(\Omega)$ such that 
\begin{enumerate}
\item[$(1)$] $f_2g_2 = h_2$;
\item[$(2)$] $f_2 \sim f_1$; 
\item[$(3)$] $g_2 \sim g_1$;
\item[$(4)$] $(h_2)\C_{\mathrm{fwd}} = (h_1)\C_{\mathrm{fwd}}$, $(h_2)\C_{\mathrm{open}} = (h)\C_{\mathrm{open}}$, $(h_2)\C_n = (h)\C_n$ for all $n \in \Z_+$;
\item[$(5)$] there are infinite cycles $\Gamma_{f_2}$, $\Gamma_{g_2}$, and $\Gamma_{h_2}$ of $f_2$, $g_2$, and $h_2$, respectively, such that $|\Gamma_{f_2} \cap  \Gamma_{g_2} \cap  \Gamma_{h_2}| = \aleph_0$.
\end{enumerate} 
Applying Lemma~\ref{fwd f} to $f_2$, $g_2$, and $h_2$, we can find maps $f_3, g_3, h_3 \in \Inj(\Omega)$ such that 
\begin{enumerate}
\item[$(1)$] $f_3g_3 = h_3$;
\item[$(2)$] $(f_3)\C_{\mathrm{fwd}} = (f)\C_{\mathrm{fwd}}$, $(f_3)\C_{\mathrm{open}} = (f_1)\C_{\mathrm{open}}$, $(f_3)\C_n = 0$ for all $n \in \Z_+$;
\item[$(3)$] $g_3 \sim g_1$;
\item[$(4)$] $(h_3)\C_{\mathrm{fwd}} = (f)\C_{\mathrm{fwd}} + (g_1)\C_{\mathrm{fwd}}$, $(h_3)\C_{\mathrm{open}} = (h)\C_{\mathrm{open}}$, $(h_3)\C_n = (h)\C_n$ for all $n \in \Z_+$;
\item[$(5)$] there are infinite cycles $\Gamma_{f_3}$, $\Gamma_{g_3}$, and $\Gamma_{h_3}$ of $f_3$, $g_3$, and $h_3$, respectively, such that $|\Gamma_{f_3} \cap  \Gamma_{g_3} \cap  \Gamma_{h_3}| = \aleph_0$.
\end{enumerate} 
Applying Lemma~\ref{fwd g}  (and the observation following Definition~\ref{cycle number}) to $f_3$, $g_3$, and $h_3$, we can find maps $f_4, g_4, h_4 \in \Inj(\Omega)$ such that 
\begin{enumerate}
\item[$(1)$] $f_4g_4 = h_4$;
\item[$(2)$] $f_4 \sim f_3$;
\item[$(3)$] $(g_4)\C_{\mathrm{fwd}} = (g)\C_{\mathrm{fwd}}$, $(g_4)\C_{\mathrm{open}} = (g_1)\C_{\mathrm{open}}$, $(g_4)\C_n = 0$ for all $n \in \Z_+$;
\item[$(4)$] $h_4 \sim h$;
\item[$(5)$] there are infinite cycles $\Gamma_{f_4}$, $\Gamma_{g_4}$, and $\Gamma_{h_4}$ of $f_4$, $g_4$, and $h_4$, respectively, such that $|\Gamma_{f_4} \cap  \Gamma_{g_4} \cap  \Gamma_{h_4}| = \aleph_0$.
\end{enumerate} 
Applying Lemma~\ref{fin f} to $f_4$, $g_4$, and $h_4$, we can find maps $f_5, g_5, h_5 \in \Inj(\Omega)$ such that 
\begin{enumerate}
\item[$(1)$] $f_5g_5 = h_5$;
\item[$(2)$] $(f_5)\C_{\mathrm{fwd}} = (f)\C_{\mathrm{fwd}}$, $(f_5)\C_{\mathrm{open}} = (f_1)\C_{\mathrm{open}}$, $(f_5)\C_n = (f)\C_n$ for all $n \in \Z_+$;
\item[$(3)$] $g_5 \sim g_4$;
\item[$(4)$] $h_5 \sim h$;
\item[$(5)$] there are infinite cycles $\Gamma_{f_5}$, $\Gamma_{g_5}$, and $\Gamma_{h_5}$ of $f_5$, $g_5$, and $h_5$, respectively, such that $|\Gamma_{f_5} \cap  \Gamma_{g_5} \cap  \Gamma_{h_5}| = \aleph_0$.
\end{enumerate} 
Applying Lemma~\ref{open f} to $f_5$, $g_5$, and $h_5$, we can find maps $f_6, g_6, h_6 \in \Inj(\Omega)$ such that 
\begin{enumerate}
\item[$(1)$] $f_6g_6 = h_6$;
\item[$(2)$] $f_6 \sim f$;
\item[$(3)$] $g_6 \sim g_5$;
\item[$(4)$] $h_6 \sim h$;
\item[$(5)$] there are infinite cycles $\Gamma_{f_6}$, $\Gamma_{g_6}$, and $\Gamma_{h_6}$ of $f_6$, $g_6$, and $h_6$, respectively, such that $|\Gamma_{f_6} \cap  \Gamma_{g_6} \cap  \Gamma_{h_6}| = \aleph_0$.
\end{enumerate} 
Applying Lemma~\ref{fin g} to $f_6$, $g_6$, and $h_6$, we can find maps $f_7, g_7, h_7 \in \Inj(\Omega)$ such that 
\begin{enumerate}
\item[$(1)$] $f_7g_7 = h_7$;
\item[$(2)$] $f_7 \sim f$;
\item[$(3)$] $(g_7)\C_{\mathrm{fwd}} = (g)\C_{\mathrm{fwd}}$, $(g_7)\C_{\mathrm{open}} = (g_1)\C_{\mathrm{open}}$, $(g_7)\C_n = (g)\C_n$ for all $n \in \Z_+$;
\item[$(4)$] $h_7 \sim h$;
\item[$(5)$] there are infinite cycles $\Gamma_{f_7}$, $\Gamma_{g_7}$, and $\Gamma_{h_7}$ of $f_7$, $g_7$, and $h_7$, respectively, such that $|\Gamma_{f_7} \cap  \Gamma_{g_7} \cap  \Gamma_{h_7}| = \aleph_0$.
\end{enumerate} 
Finally, applying Lemma~\ref{open g} to $f_7$, $g_7$, and $h_7$, we can find maps $f', g' \in \Inj(\Omega)$ such that $f' \sim f$, $g' \sim g$, and $f'g' \sim h$, as desired. 
\end{proof}

\newpage

\section*{Appendix: graphs}

\begin{picture}(360,250)(0,0)
\put(115,10){{\bf Figure 1.} Graph of $f$ in Lemma~\ref{open h}.}

\begin{tiny}
% 0 row
\put(195,230){$(0,0)$} \put(235,230){$(0,1)$}
\put(275,230){$(0,2)$} \put(315,230){$(0,3)\ \dots$}
\put(155,230){$(0,-1)$} \put(115,230){$(0,-2)$} 
\put(75,230){$(0,-3)$} \put(63,230){$\dots$}

% 1 row
\put(195,195){$(1,0)$} \put(235,195){$(1,1)$}
\put(275,195){$(1,2)$} \put(315,195){$(1,3)\ \dots$}
\put(155,195){$(1,-1)$} \put(115,195){$(1,-2)$} 
\put(75,195){$(1,-3)$} \put(63,195){$\dots$}

% 2 row
\put(195,160){$(2,0)$} \put(235,160){$(2,1)$}
\put(275,160){$(2,2)$} \put(315,160){$(2,3)\ \dots$}
\put(155,160){$(2,-1)$} \put(115,160){$(2,-2)$} 
\put(75,160){$(2,-3)$} \put(63,160){$\dots$}

% 3 row
\put(195,125){$(3,0)$} \put(235,125){$(3,1)$}
\put(275,125){$(3,2)$} \put(315,125){$(3,3)\ \dots$}
\put(155,125){$(3,-1)$} \put(115,125){$(3,-2)$} 
\put(75,125){$(3,-3)$} \put(63,125){$\dots$}

% dots
\put(203,100){$\vdots$} \put(243,100){$\vdots$}
\put(283,100){$\vdots$} \put(323,100){$\vdots$}
\put(163,100){$\vdots$} \put(123,100){$\vdots$} 
\put(83,100){$\vdots$}

% K-1 row
\put(190,80){$(K-1,0)$} \put(230,80){$(K-1,1)$}
\put(270,80){$(K-1,2)$} \put(310,80){$(K-1,3)\ \dots$}
\put(150,80){$(K-1,-1)$}
\put(110,80){$(K-1,-2)$} \put(70,80){$(K-1,-3)$}
\put(57,80){$\dots$}

% K row
\put(195,45){$(K,0)$} \put(235,45){$(K,1)$}
\put(275,45){$(K,2)$} \put(315,45){$(K,3)\ \dots$}
\put(155,45){$(K,-1)$} \put(115,45){$(K,-2)$} 
\put(75,45){$(K,-3)$} \put(63,45){$\dots$}

% ovals over 0 row
\put(207,235){\oval(75,20)[t]} \put(169,234){\vector(0,-1){0}}
\put(207,235){\oval(155,25)[t]} \put(129,234){\vector(0,-1){0}}
\put(207,235){\oval(235,30)[t]} \put(89,234){\vector(0,-1){0}}

% arrows between 0 row and 1 row
\put(203,228){\vector(0,-1){28}} \put(169,228){\vector(0,-1){28}}
\put(129,228){\vector(0,-1){28}} \put(89,228){\vector(0,-1){28}}
\put(205,201){\vector(4,3){37}} \put(245,201){\vector(4,3){37}}
\put(285,201){\vector(4,3){37}}

% arrows between 1 row and 2 row
\put(207,193){\oval(75,20)[b]} \put(244,194){\vector(0,1){0}}
\put(85,193){\vector(3,-1){79}} \put(125,193){\vector(3,-1){79}}
\put(207,167){\vector(3,1){79}} \put(247,167){\vector(3,1){79}} 

% arrows between 2 row and 3 row
\put(207,158){\oval(75,20)[b]} \put(244,159){\vector(0,1){0}}
\put(85,158){\vector(3,-1){79}} \put(125,158){\vector(3,-1){79}}
\put(207,132){\vector(3,1){79}} \put(247,132){\vector(3,1){79}} 

% arrows between K-1 row and K row
\put(207,78){\oval(75,20)[b]} \put(244,79){\vector(0,1){0}}
\put(85,78){\vector(3,-1){79}} \put(125,78){\vector(3,-1){79}}
\put(207,52){\vector(3,1){79}} \put(247,52){\vector(3,1){79}} 

% ovals below K row
\put(207,43){\oval(75,20)[b]} \put(244,44){\vector(0,1){0}}
\put(207,43){\oval(155,25)[b]} \put(284,44){\vector(0,1){0}}
\put(207,43){\oval(235,30)[b]} \put(324,44){\vector(0,1){0}}

\end{tiny}
\end{picture}

\vspace{.1in}

\begin{picture}(360,250)(0,0)
\put(115,10){{\bf Figure 2.} Graph of $g$ in Lemma~\ref{open h}.}

\begin{tiny}
% 0 row
\put(195,230){$(0,0)$} \put(235,230){$(0,1)$}
\put(275,230){$(0,2)$} \put(315,230){$(0,3)\ \dots$}
\put(155,230){$(0,-1)$} \put(115,230){$(0,-2)$} 
\put(75,230){$(0,-3)$} \put(63,230){$\dots$}

% 1 row
\put(195,195){$(1,0)$} \put(235,195){$(1,1)$}
\put(275,195){$(1,2)$} \put(315,195){$(1,3)\ \dots$}
\put(155,195){$(1,-1)$} \put(115,195){$(1,-2)$} 
\put(75,195){$(1,-3)$} \put(63,195){$\dots$}

% 2 row
\put(195,160){$(2,0)$} \put(235,160){$(2,1)$}
\put(275,160){$(2,2)$} \put(315,160){$(2,3)\ \dots$}
\put(155,160){$(2,-1)$} \put(115,160){$(2,-2)$} 
\put(75,160){$(2,-3)$} \put(63,160){$\dots$}

% 3 row
\put(195,125){$(3,0)$} \put(235,125){$(3,1)$}
\put(275,125){$(3,2)$} \put(315,125){$(3,3)\ \dots$}
\put(155,125){$(3,-1)$} \put(115,125){$(3,-2)$} 
\put(75,125){$(3,-3)$} \put(63,125){$\dots$}

% dots
\put(203,100){$\vdots$} \put(243,100){$\vdots$}
\put(283,100){$\vdots$} \put(323,100){$\vdots$}
\put(163,100){$\vdots$} \put(123,100){$\vdots$} 
\put(83,100){$\vdots$}

% K-1 row
\put(190,80){$(K-1,0)$} \put(230,80){$(K-1,1)$}
\put(270,80){$(K-1,2)$} \put(310,80){$(K-1,3)\ \dots$}
\put(150,80){$(K-1,-1)$}
\put(110,80){$(K-1,-2)$} \put(70,80){$(K-1,-3)$}
\put(57,80){$\dots$}

% K row
\put(195,45){$(K,0)$} \put(235,45){$(K,1)$}
\put(275,45){$(K,2)$} \put(315,45){$(K,3)\ \dots$}
\put(155,45){$(K,-1)$} \put(115,45){$(K,-2)$} 
\put(75,45){$(K,-3)$} \put(63,45){$\dots$}

% ovals over 0 row
\put(222,235){\oval(40,20)[t]} \put(242,234){\vector(0,-1){0}}
\put(222,235){\oval(120,25)[t]} \put(282,234){\vector(0,-1){0}}
\put(222,235){\oval(200,30)[t]} \put(322,234){\vector(0,-1){0}}

% arrows between 0 row and 1 row
\put(242,228){\vector(0,-1){28}} \put(282,228){\vector(0,-1){28}}
\put(322,228){\vector(0,-1){28}} \put(201,200){\vector(-4,3){37}} \put(161,200){\vector(-4,3){37}} \put(121,200){\vector(-4,3){37}} 

% arrows between 1 row and 2 row
\put(222,193){\oval(40,20)[b]} \put(202,194){\vector(0,1){0}}
\put(201,165){\vector(-4,3){37}} \put(161,165){\vector(-4,3){37}} \put(121,165){\vector(-4,3){37}} \put(282,193){\vector(-4,-3){37}}
\put(322,193){\vector(-4,-3){37}}

% arrows between 2 row and 3 row
\put(222,158){\oval(40,20)[b]} \put(202,159){\vector(0,1){0}}
\put(201,130){\vector(-4,3){37}} \put(161,130){\vector(-4,3){37}} \put(121,130){\vector(-4,3){37}} \put(282,158){\vector(-4,-3){37}}
\put(322,158){\vector(-4,-3){37}}

% arrows between K-1 row and K row
\put(222,78){\oval(40,20)[b]} \put(202,79){\vector(0,1){0}}
\put(201,50){\vector(-4,3){37}} \put(161,50){\vector(-4,3){37}} 
\put(121,50){\vector(-4,3){37}} \put(282,78){\vector(-4,-3){37}}
\put(322,78){\vector(-4,-3){37}}

% ovals below K row
\put(222,43){\oval(40,20)[b]} \put(202,44){\vector(0,1){0}}
\put(222,43){\oval(120,25)[b]} \put(162,44){\vector(0,1){0}}
\put(222,43){\oval(200,30)[b]} \put(122,44){\vector(0,1){0}}

\end{tiny}
\end{picture}

\vspace{.1in}

\begin{picture}(390,300)(0,0)
\put(115,10){{\bf Figure 3.} Graph of $f$ in Lemma~\ref{open h 2}.}

\begin{tiny}
% -1 row
\put(232,265){$(-1,1)$} \put(272,265){$(-1,2)$} 
\put(312,265){$(-1,3)$} \put(352,265){$(-1,4)\ \dots$}

% 0 row
\put(195,230){$(0,0)$} \put(235,230){$(0,1)$}
\put(275,230){$(0,2)$}
\put(155,230){$(0,-1)$} \put(115,230){$(0,-2)$} 
\put(75,230){$(0,-3)$} \put(35,230){$(0,-4)$} 
\put(23,230){$\dots$} \put(315,230){$(0,3)$}
\put(355,230){$(0,4)\ \dots$}

% 1 row
\put(195,195){$(1,0)$} \put(235,195){$(1,1)$}
\put(275,195){$(1,2)$}
\put(155,195){$(1,-1)$} \put(115,195){$(1,-2)$} 
\put(75,195){$(1,-3)$} \put(35,195){$(1,-4)$} 
\put(23,195){$\dots$} \put(315,195){$(1,3)$}
\put(355,195){$(1,4)\ \dots$}

% 2 row
\put(195,160){$(2,0)$} \put(235,160){$(2,1)$}
\put(275,160){$(2,2)$} 
\put(155,160){$(2,-1)$} \put(115,160){$(2,-2)$} 
\put(75,160){$(2,-3)$} \put(35,160){$(2,-4)$} 
\put(23,160){$\dots$} \put(315,160){$(2,3)$}
\put(355,160){$(2,4)\ \dots$}

% 3 row
\put(195,125){$(3,0)$} \put(235,125){$(3,1)$}
\put(275,125){$(3,2)$}
\put(155,125){$(3,-1)$} \put(115,125){$(3,-2)$} 
\put(75,125){$(3,-3)$} \put(35,125){$(3,-4)$} 
\put(23,125){$\dots$} \put(315,125){$(3,3)$}
\put(355,125){$(3,4)\ \dots$}

% dots
\put(203,100){$\vdots$} \put(243,100){$\vdots$}
\put(283,100){$\vdots$} \put(323,100){$\vdots$}
\put(163,100){$\vdots$} \put(123,100){$\vdots$} 
\put(83,100){$\vdots$} \put(43,100){$\vdots$}
\put(363,100){$\vdots$}

% K-2 row
\put(190,80){$(K-2,0)$} \put(230,80){$(K-2,1)$}
\put(270,80){$(K-2,2)$}
\put(150,80){$(K-2,-1)$}
\put(110,80){$(K-2,-2)$} \put(70,80){$(K-2,-3)$}
\put(30,80){$(K-2,-4)$} 
\put(17,80){$\dots$} \put(310,80){$(K-2,3)$}
\put(350,80){$(K-2,4)\ \dots$}

% K-1 row
\put(190,45){$(K-1,0)$} \put(230,45){$(K-1,1)$}
\put(270,45){$(K-1,2)$}
\put(150,45){$(K-1,-1)$}
\put(110,45){$(K-1,-2)$} \put(70,45){$(K-1,-3)$}
\put(30,45){$(K-1,-4)$} 
\put(17,45){$\dots$} \put(310,45){$(K-1,3)$}
\put(350,45){$(K-1,4)\ \dots$}

% arrows between -1 row and 0 row
\put(245,236){\vector(4,3){37}} \put(325,236){\vector(4,3){37}}
\put(285,236){\vector(-4,3){37}} \put(365,236){\vector(-4,3){37}}
\put(187,270){\oval(116,20)[t]} \put(129,270){\vector(0,-1){33}}
\put(227,270){\oval(116,30)[t]} \put(169,270){\vector(0,-1){33}}
\put(187,270){\oval(276,40)[t]} \put(49,270){\vector(0,-1){33}}
\put(227,270){\oval(276,50)[t]} \put(89,270){\vector(0,-1){33}}

% arrows between 0 row and 1 row
\put(203,228){\vector(0,-1){28}} \put(169,228){\vector(0,-1){28}}
\put(129,228){\vector(0,-1){28}} \put(89,228){\vector(0,-1){28}}
\put(49,228){\vector(0,-1){28}}
\put(205,201){\vector(4,3){37}} \put(245,201){\vector(4,3){37}}
\put(285,201){\vector(4,3){37}} \put(325,201){\vector(4,3){37}}

% arrows between 1 row and 2 row
\put(207,193){\oval(75,20)[b]} \put(244,194){\vector(0,1){0}}
\put(85,193){\vector(3,-1){79}} \put(125,193){\vector(3,-1){79}}
 \put(45,193){\vector(3,-1){79}} \put(287,167){\vector(3,1){79}} 
\put(207,167){\vector(3,1){79}} \put(247,167){\vector(3,1){79}} 

% arrows between 2 row and 3 row
\put(207,158){\oval(75,20)[b]} \put(244,159){\vector(0,1){0}}
\put(85,158){\vector(3,-1){79}} \put(125,158){\vector(3,-1){79}}
\put(45,158){\vector(3,-1){79}} \put(287,132){\vector(3,1){79}}
\put(207,132){\vector(3,1){79}} \put(247,132){\vector(3,1){79}} 

% arrows between K-1 row and K row
\put(207,78){\oval(75,20)[b]} \put(244,79){\vector(0,1){0}}
\put(85,78){\vector(3,-1){79}} \put(125,78){\vector(3,-1){79}}
\put(45,78){\vector(3,-1){79}} \put(287,52){\vector(3,1){79}}
\put(207,52){\vector(3,1){79}} \put(247,52){\vector(3,1){79}} 

% ovals below K row
\put(207,43){\oval(75,20)[b]} \put(244,44){\vector(0,1){0}}
\put(207,43){\oval(155,25)[b]} \put(284,44){\vector(0,1){0}}
\put(207,43){\oval(235,30)[b]} \put(324,44){\vector(0,1){0}}
\put(207,43){\oval(315,35)[b]} \put(364,44){\vector(0,1){0}}
\end{tiny}
\end{picture}

\vspace{.1in}

\begin{picture}(390,300)(0,0)
\put(115,10){{\bf Figure 4.} Graph of $g$ in Lemma~\ref{open h 2}.}

\begin{tiny}
% -1 row
\put(232,265){$(-1,1)$} \put(272,265){$(-1,2)$} 
\put(312,265){$(-1,3)$} \put(352,265){$(-1,4)\ \dots$}

% 0 row
\put(195,230){$(0,0)$} \put(235,230){$(0,1)$}
\put(275,230){$(0,2)$}
\put(155,230){$(0,-1)$} \put(115,230){$(0,-2)$} 
\put(75,230){$(0,-3)$} \put(35,230){$(0,-4)$} 
\put(23,230){$\dots$} \put(315,230){$(0,3)$}
\put(355,230){$(0,4)\ \dots$}

% 1 row
\put(195,195){$(1,0)$} \put(235,195){$(1,1)$}
\put(275,195){$(1,2)$}
\put(155,195){$(1,-1)$} \put(115,195){$(1,-2)$} 
\put(75,195){$(1,-3)$} \put(35,195){$(1,-4)$} 
\put(23,195){$\dots$} \put(315,195){$(1,3)$}
\put(355,195){$(1,4)\ \dots$}

% 2 row
\put(195,160){$(2,0)$} \put(235,160){$(2,1)$}
\put(275,160){$(2,2)$} 
\put(155,160){$(2,-1)$} \put(115,160){$(2,-2)$} 
\put(75,160){$(2,-3)$} \put(35,160){$(2,-4)$} 
\put(23,160){$\dots$} \put(315,160){$(2,3)$}
\put(355,160){$(2,4)\ \dots$}

% 3 row
\put(195,125){$(3,0)$} \put(235,125){$(3,1)$}
\put(275,125){$(3,2)$}
\put(155,125){$(3,-1)$} \put(115,125){$(3,-2)$} 
\put(75,125){$(3,-3)$} \put(35,125){$(3,-4)$} 
\put(23,125){$\dots$} \put(315,125){$(3,3)$}
\put(355,125){$(3,4)\ \dots$}

% dots
\put(203,100){$\vdots$} \put(243,100){$\vdots$}
\put(283,100){$\vdots$} \put(323,100){$\vdots$}
\put(163,100){$\vdots$} \put(123,100){$\vdots$} 
\put(83,100){$\vdots$} \put(43,100){$\vdots$}
\put(363,100){$\vdots$}

% K-2 row
\put(190,80){$(K-2,0)$} \put(230,80){$(K-2,1)$}
\put(270,80){$(K-2,2)$}
\put(150,80){$(K-2,-1)$}
\put(110,80){$(K-2,-2)$} \put(70,80){$(K-2,-3)$}
\put(30,80){$(K-2,-4)$} 
\put(17,80){$\dots$} \put(310,80){$(K-2,3)$}
\put(350,80){$(K-2,4)\ \dots$}

% K-1 row
\put(190,45){$(K-1,0)$} \put(230,45){$(K-1,1)$}
\put(270,45){$(K-1,2)$}
\put(150,45){$(K-1,-1)$}
\put(110,45){$(K-1,-2)$} \put(70,45){$(K-1,-3)$}
\put(30,45){$(K-1,-4)$} 
\put(17,45){$\dots$} \put(310,45){$(K-1,3)$}
\put(350,45){$(K-1,4)\ \dots$}

% arrows between -1 row and 0 row
\put(242,263){\vector(0,-1){28}} \put(322,263){\vector(0,-1){28}}
\put(285,263){\vector(-3,-1){79}} \put(365,263){\vector(-3,-1){79}}
\put(222,270){\oval(40,20)[t]} \put(202,270){\line(0,-1){33}} \put(242,269){\vector(0,-1){0}}
\put(242,270){\oval(160,40)[t]} \put(162,270){\line(0,-1){33}} \put(322,269){\vector(0,-1){0}}
\put(202,270){\oval(160,30)[t]} \put(122,270){\line(0,-1){33}} \put(282,269){\vector(0,-1){0}}
\put(202,270){\oval(320,50)[t]} \put(42,270){\line(0,-1){33}} \put(362,269){\vector(0,-1){0}}
\put(232,270){\oval(300,60)[t]} \put(82,270){\line(0,-1){33}} \put(382,269){\vector(0,-1){0}}

% arrows between 0 row and 1 row
\put(242,228){\vector(0,-1){28}} \put(282,228){\vector(0,-1){28}}
\put(322,228){\vector(0,-1){28}} \put(362,228){\vector(0,-1){28}}
\put(81,200){\vector(-4,3){37}}
\put(201,200){\vector(-4,3){37}} \put(161,200){\vector(-4,3){37}} \put(121,200){\vector(-4,3){37}} 

% arrows between 1 row and 2 row
\put(222,193){\oval(40,20)[b]} \put(202,194){\vector(0,1){0}}
\put(201,165){\vector(-4,3){37}} \put(161,165){\vector(-4,3){37}} \put(121,165){\vector(-4,3){37}} \put(81,165){\vector(-4,3){37}} 
\put(362,193){\vector(-4,-3){37}}
\put(282,193){\vector(-4,-3){37}}
\put(322,193){\vector(-4,-3){37}}

% arrows between 2 row and 3 row
\put(222,158){\oval(40,20)[b]} \put(202,159){\vector(0,1){0}}
\put(201,130){\vector(-4,3){37}} \put(161,130){\vector(-4,3){37}} \put(121,130){\vector(-4,3){37}} \put(81,130){\vector(-4,3){37}} 
\put(362,158){\vector(-4,-3){37}}
\put(282,158){\vector(-4,-3){37}}
\put(322,158){\vector(-4,-3){37}}

% arrows between K-1 row and K row
\put(222,78){\oval(40,20)[b]} \put(202,79){\vector(0,1){0}}
\put(201,50){\vector(-4,3){37}} \put(161,50){\vector(-4,3){37}} 
\put(121,50){\vector(-4,3){37}} \put(81,50){\vector(-4,3){37}}
\put(362,78){\vector(-4,-3){37}}
\put(282,78){\vector(-4,-3){37}}
\put(322,78){\vector(-4,-3){37}}

% ovals below K row
\put(222,43){\oval(40,20)[b]} \put(202,44){\vector(0,1){0}}
\put(222,43){\oval(120,25)[b]} \put(162,44){\vector(0,1){0}}
\put(222,43){\oval(200,30)[b]} \put(122,44){\vector(0,1){0}}
\put(222,43){\oval(280,35)[b]} \put(82,44){\vector(0,1){0}}
\end{tiny}
\end{picture}

\newpage

\vspace{.1in}

\noindent
Department of Mathematics \newline
Ben Gurion University \newline
Beer Sheva, 84105 \newline
Israel \newline

\noindent Email: {\tt mesyan@bgu.ac.il}

\end{document}